\newtheorem{theorem}{Theorem}[section]
\newtheorem{definition}[theorem]{Definición}
\newtheorem{proposition}[theorem]{Proposition}
\newtheorem{corollary}[theorem]{Corollary}
\newtheorem{lemma}[theorem]{Lemma}
\newtheorem{remark}[theorem]{Remark}
\newtheorem{example}[theorem]{Example}
\numberwithin{equation}{section}
\begin{document}

\title{Complemented subspaces of polynomial ideals}
\author{{{Sergio A. Pérez}
{\thanks{S.Pérez was supported by Minciencias, Colombia. (corresponding author) }}}\\{\small  Universidad Pedagógica y Tecnológica de Colombia} 
\\{\small \texttt{Email:Sergio.2060@hotmail.com}}\\\vspace{-0.2cm}}
\maketitle
\begin{abstract}

Given the polynomial ideal $\mathcal{J}\circ\mathcal{P} (^{n}E; F)$, we prove that if $\mathcal{J}\circ\mathcal{P} (^{n}E; F)$ contains an isomorphic copy of $c_{0}$, then $\mathcal{J}\circ\mathcal{P} (^{n}E; F)$ is not complemented in $\mathcal{P} (^{n}E; F)$ for every closed operator ideal $\mathcal{J}\subset \mathcal{L}_{K}$ and every $n\in\mathbb{N}$. Likewise we show that if $\widehat{(\mathcal{J}\circ\mathcal{L})^{fac}}(^{n}E;F)$ contains an isomorphic copy of $c_{0}$, then $\widehat{(\mathcal{J}\circ\mathcal{L})^{fac}}(^{n}E;F)$ is not complemented in $\mathcal{P}(^{n}E; F)$ for every closed operator ideal $\mathcal{J}\subset \mathcal{L}_{K}$ and every $n>1$. When $\mathcal{J}=\mathcal{L}_{K}$, these results generalizes results of several authors
\cite{LEW},\cite{EM},\cite{KALTON},\cite{IOANA},\cite{SERGIO}, among others.



\end{abstract}
\section{Introduction}

The problem of establishing sufficient conditions for the complementation of the subspace of compact linear operators $\mathcal{L}_{K}(E;F)$ in the space $\mathcal{L}(E;F)$ of all continuous linear operators, has been widely studied by many authors. For example, see  Bator and Lewis \cite{LEW}, Emmanuelle \cite{EM}, John \cite{KA}, Kalton \cite{KALTON} and Ghenciu \cite{IOANA}, among others.

Emmanuele \cite{EM} and John \cite{KA} showed that if $c_{0}$ embeds in $\mathcal{L}_{K}(E;F)$ then $\mathcal{L}_{K}(E;F)$ is not complemented in $\mathcal{L}(E;F)$ for every $E$ and $F$ infinite dimensional Banach spaces.

John \cite{KA} proved that if $E$ and $F$ are arbitrary Banach spaces and $T: E\rightarrow F$ is a non compact
operator which admits a factorization $T = A\circ B$ through a Banach space
$G$ with an unconditional basis, then the subspace $\mathcal{L}_{K}(E;F)$
of compact operators contains an isomorphic copy of $c_{0}$ and thus $\mathcal{L}_{K}(E;F)$ is not
complemented in $\mathcal{L}(E;F)$.
John \cite{KA} also proved that if $E$ and $F$ are infinite dimensional Banach spaces, such that each
non compact operator $T \in \mathcal{L}(E;F)$ factors through a Banach space $G$ with an unconditional basis, then the following conditions are equivalent:
\begin{enumerate}
\item $\mathcal{L}_{K} (E; F)=\mathcal{L} (E;F)$.
\item $\mathcal{L} (E; F)$ contains no copy of $\ell_{\infty}$.
\item $\mathcal{L}_{K} (E; F)$ contains no copy of $c_{0}$.
\item  $\mathcal{L}_{K} (E; F)$ is complemented in $\mathcal{L} (E; F)$.
\end{enumerate}

Ghenciu \cite{IOANA} obtained the following result:

\begin{theorem} \label{thm:(Teorema 1009)}(\cite[Theorem 1]{IOANA})
Let $E$ and $F$ be Banach spaces, and let $G$ be a Banach space with an unconditional basis $(g_{n})$ and coordinate functionals $(g^{\prime}_{n})$.

\begin{enumerate}

\item [(a)] If there exist operators $R\in\mathcal{L}(G; F)$ and $S\in\mathcal{L}(E; G)$ such that $(R(g_{n}))$ is a semi-normalized basic sequence
in $F$ and $(S^{\prime}(g^{\prime}_{n}))$ is not relatively compact in $E^{\prime}$, then $\mathcal{L}_{K}(E; F)$ is not complemented in
$\mathcal{L}(E; F)$.

\item [(b)] If there exist operators $R\in\mathcal{L}(G; F)$ and $S\in\mathcal{L}(E; G)$ such that $(R(g_{n}))$ is a semi-normalized basic sequence
in $F$ and $(S^{\prime}(g^{\prime}_{n}))$ is not relatively weakly compact in $E^{\prime}$, then the space of all weakly compact linear operators $\mathcal{L}_{wK}(E; F)$ is not complemented in $\mathcal{L}(E; F)$.
\end{enumerate}
\end{theorem}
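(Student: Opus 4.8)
The plan is to derive both parts from a single construction that turns the data $G,(g_n),(g_n'),R,S$ into a copy of $c_0$ inside the relevant operator ideal, together with a bounded operator out of $\ell_\infty$ extending that copy. For part (a) this will reduce the statement to the Emmanuele--John theorem recalled in the Introduction; for part (b) one needs its weakly compact counterpart, and supplying that is where the work lies.

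First I would set up the construction. Let $C$ be the unconditional constant of $(g_n)$ and, for $n\in\mathbb{N}$, put $u_n:=(S'g_n')\otimes(Rg_n)$, i.e.\ $u_n(x)=\langle g_n',Sx\rangle\,Rg_n$; equivalently $u_n=R\circ M_n\circ S$, where $M_n\colon G\to G$ is the rank-one coordinate projection onto $\mathrm{span}\{g_n\}$. Each $u_n$ is of rank one, so $u_n\in\mathcal{L}_K(E;F)\subseteq\mathcal{L}_{wK}(E;F)$, and $\|u_n\|=\|S'g_n'\|\,\|Rg_n\|$. More generally, for $a=(a_n)\in\ell_\infty$ let $D_a\colon G\to G$ be the diagonal operator $D_a(\sum c_ng_n)=\sum a_nc_ng_n$; by unconditionality $\|D_a\|\le C\|a\|_\infty$. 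Then $\Psi\colon\ell_\infty\to\mathcal{L}(E;F)$, $\Psi(a):=R\,D_a\,S$, is bounded, $\Psi(e_n)=u_n$, and $\Psi(\mathbf 1)=RS$. Moreover: if $a\in c_0$ then $D_a$ is a norm limit of finite-rank operators, hence compact, so $\Psi(c_0)\subseteq\mathcal{L}_K(E;F)$; and for finite $\sigma\subset\mathbb{N}$, any signs, and $x\in E$, $\sum_{n\in\sigma}\pm u_n(x)=R\big(\sum_{n\in\sigma}\pm\langle g_n',Sx\rangle g_n\big)$ has $G$-norm $\le C\|Sx\|$, so $\sum_n u_n$ is a weakly unconditionally Cauchy (WUC) series in $\mathcal{L}_K(E;F)$; finally $\sum_{n\le N}u_n(x)=R(\pi_NSx)\to RSx$ (with $\pi_N$ the $N$-th basis projection of $G$), so $\sum_n u_n\to RS$ in the strong operator topology.

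Next I would convert the hypotheses into the two facts I need: that $c_0$ embeds in the ideal, and (for (b)) that $RS$ lies outside it. Because $(Rg_n)$ is semi-normalized basic, its coordinate functionals are bounded, and Hahn--Banach extends them to $h_n\in F'$ with $\sup_n\|h_n\|<\infty$ and $\langle h_n,Rg_m\rangle=\delta_{nm}$; since $(g_n)$ is a basis of $G$ this forces $R'h_n=g_n'$, hence $S'g_n'=(RS)'h_n$ for every $n$. Thus $\{S'g_n'\}$ is the image under $(RS)'$ of a bounded set, so: if $RS$ were compact then $\{S'g_n'\}$ would be relatively compact, and if $RS$ were weakly compact then (Gantmacher) $\{S'g_n'\}$ would be relatively weakly compact. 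Hence hypothesis (a) gives $RS\notin\mathcal{L}_K(E;F)$, and hypothesis (b) gives $RS\notin\mathcal{L}_{wK}(E;F)$ (in particular $E$ and $F$ are infinite-dimensional in both cases). Since the ideal is norm-closed and $\sum_nu_n\to RS$ strongly, the WUC series $\sum_nu_n$ cannot converge in norm, hence is not unconditionally convergent; by the Bessaga--Pe\l czy\'nski $c_0$-theorem a subsequence $(u_{n_k})$ is equivalent to the unit vector basis of $c_0$, so $c_0$ embeds in $\mathcal{L}_K(E;F)\subseteq\mathcal{L}_{wK}(E;F)$. For part (a) this finishes the proof: the Emmanuele--John theorem from the Introduction yields that $\mathcal{L}_K(E;F)$ is not complemented in $\mathcal{L}(E;F)$.

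For part (b) the embedding $c_0\hookrightarrow\mathcal{L}_{wK}(E;F)$ alone is not enough (if $F$ is reflexive, $\mathcal{L}_{wK}(E;F)=\mathcal{L}(E;F)$), so one must exploit that $RS=\Psi(\mathbf 1)$ escapes $\mathcal{L}_{wK}(E;F)$. Relabel so that $(u_n)$ itself is equivalent to the $c_0$-basis; then $\Psi|_{c_0}$ (with $c_0=\overline{\mathrm{span}}\{e_n\}$) is an isomorphic embedding into $\mathcal{L}_{wK}(E;F)$. Suppose $P\colon\mathcal{L}(E;F)\to\mathcal{L}_{wK}(E;F)$ were a bounded projection. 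Then $P\Psi\colon\ell_\infty\to\mathcal{L}_{wK}(E;F)$ agrees with $\Psi$ on $c_0$ (which $P$ fixes), so $P\Psi|_{c_0}$ is an isomorphic embedding of $c_0$ and hence $P\Psi$ is not weakly compact (an embedding of $c_0$ is never weakly compact); by the classical fixing theorem for operators from $\ell_\infty$ (Rosenthal), a non-weakly-compact operator from $\ell_\infty$ fixes a copy of $\ell_\infty$, while $(\mathrm{id}-P)\Psi$ vanishes on $c_0$ and so factors through the Grothendieck space $\ell_\infty/c_0$. Running the Emmanuele--John argument with these ingredients — the bounded extension $\Psi$, the projection $P$, the factorization through $\ell_\infty/c_0$, and $\Psi(\mathbf 1)=RS\notin\mathcal{L}_{wK}(E;F)$ — produces a contradiction, just as in the weakly compact versions of the theorems of Emmanuele and Ghenciu cited in the Introduction. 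This transfer of the compact-operator argument to the weakly compact setting is the main obstacle; everything else is the bookkeeping described above.
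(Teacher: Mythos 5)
Your part (a) is sound, and it is essentially the route the paper itself indicates right after the statement ("Theorem \ref{thm:(Teorema 1009)} (a) follows from results of Emmanuele and John"): the rank-one operators $u_n=R M_nS$ give a weakly unconditionally Cauchy series in $\mathcal{L}_K(E;F)$ converging strongly to $RS$; the identity $S'g_n'=(RS)'h_n$ shows $RS$ is not compact, so $E$ and $F$ are infinite dimensional and the series is not norm convergent; Bessaga--Pe\l czy\'nski then yields $c_0\hookrightarrow\mathcal{L}_K(E;F)$ (a block sequence rather than a subsequence, but that is immaterial here), and the Emmanuele--John theorem finishes. Note, however, that this is not how the paper argues: its own proof (of the generalization, Theorem~\ref{thm:(Teorema 10)}) is a vector-measure argument \`a la Ghenciu--Lewis, which it needs precisely because no Emmanuele--John-type theorem is available for the ideals it treats.

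Part (b) contains a genuine gap, which you acknowledge ("the transfer \ldots is the main obstacle"), and the sketch you give cannot be completed as written. From a projection $P$ you conclude that $P\Psi$ is not weakly compact, hence by Rosenthal it fixes a copy of $\ell_\infty$, so $\ell_\infty\hookrightarrow\mathcal{L}_{wK}(E;F)$ --- but that is not a contradiction. Take $E=\ell_1$, $F=G=c_0$, $R=\mathrm{id}$, $S$ the inclusion: hypothesis (b) holds (the vectors $S'e_n'=e_n$ have no weakly convergent subsequence in $\ell_\infty$), yet the diagonal operators $a\mapsto T_a$, $T_a x=(a_nx_n)_n$, embed $\ell_\infty$ isometrically into $\mathcal{L}_{wK}(\ell_1;c_0)$; likewise the factorization of $(\mathrm{id}-P)\Psi$ through the Grothendieck space $\ell_\infty/c_0$ gives nothing, since $\mathcal{L}(E;F)$ is not separable. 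The contradiction must instead be located in a concrete operator: one has to show that the projection forces some $RP_MS$ ($M\subset\mathbb{N}$ infinite, $P_M$ the basis projection) into $\mathcal{L}_{wK}(E;F)$, while no such $RP_MS$ can be weakly compact. The second half of this requires first replacing $(g_n)$ by a subsequence along which no subsequence of $(S'g_n')$ is relatively weakly compact (Eberlein--\v{S}mulian); this refinement is also absent from your sketch, and without it the single fact $RS\notin\mathcal{L}_{wK}(E;F)$ does not control $RP_MS$. This is exactly what the paper's Theorem~\ref{thm:(Teorema 10)} (applied with $\mathcal{J}=\mathcal{I}=\mathcal{L}_{wK}$, whose hypothesis demands non-membership in $\mathcal{C}_{\mathcal{I}}(E')$ for \emph{every} subsequence) accomplishes: set $\mu(A)=RP_AS$, compose with the putative projection and an isometry into $\ell_\infty$ on a suitable separable subspace $E_0$, observe the difference vanishes on singletons, apply the selection lemma of Ghenciu--Lewis to find an infinite $M$ with $\mu(M)|_{E_0}$ in the ideal, and then use Gantmacher ($\mathcal{L}_{wK}\subset\mathcal{L}_{wK}^{dual}$) to make $(S'g_n')_{n\in M}$ relatively weakly compact, the desired contradiction. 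So (a) stands, but (b) needs this measure-plus-selection-lemma mechanism (or Ghenciu's original argument), not merely a $c_0$-embedding plus Rosenthal's fixing theorem.
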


This result generalizes results of several authors \cite{EMA},\cite{LEW}, \cite{FEDER}.
Clearly Theorem \ref{thm:(Teorema 1009)} $(a)$ follows from results of Emmanuele  \cite{EM} and John \cite{KA} previously aforementioned.

We consider the space $P_{w}(^{n}E; F)$ of all $n-$ homogeneous polynomials from $E$ into $F$ which are weakly continuous on bounded sets. When $n=1$ we have that $P_{w}(^{n}E; F)=\mathcal{L}_{K}(E; F)$.
\bigskip

Pérez  \cite{SERGIO}  proved the polynomial versions of the previously results, among the most prominent theorems
we find the following:

\begin{theorem} \label{thm:(Teorema 12233)}
Let $E$ and $F$ be Banach spaces, and let $G$ be a Banach space with an unconditional basis $(g_{n})$ and coordinate functionals $(g^{\prime}_{n})$.
If there exist operators $R\in \mathcal{L}(G; F)$ and $S\in\mathcal{L}(E; G)$ such that $(R(g_{n}))$ is a semi-normalized basic sequence
in $F$ and $(S^{\prime}(g^{\prime}_{n}))$ is not relatively compact in $E^{\prime}$, then $\mathcal{P}_{w}(^{n}E; F)$ is not complemented in
$\mathcal{P}(^{n}E; F)$ for every $n\in \mathbb{N}$.
\end{theorem}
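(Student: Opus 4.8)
The plan is to reduce Theorem \ref{thm:(Teorema 12233)} to the linear case (Theorem \ref{thm:(Teorema 1009)}(a), or directly to the Emmanuele--John result) by exhibiting a commuting diagram of projections. First I would observe that the composition operator $u \mapsto u \circ (S,\dots,S)$ going from $\mathcal{L}(G;F)$ into $\mathcal{P}(^{n}E;F)$, together with a suitable ``restriction'' map in the reverse direction, will let me transport a hypothetical projection $\mathcal{P}_{w}(^{n}E;F) \hookrightarrow \mathcal{P}(^{n}E;F)$ back down to a projection $\mathcal{L}_{K}(E_0;F) \hookrightarrow \mathcal{L}(E_0;F)$ for an appropriate infinite-dimensional $E_0$, contradicting Emmanuele--John. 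The key point making this work is that for a linear operator $v \in \mathcal{L}(G;F)$, the $n$-homogeneous polynomial $x \mapsto v(S(x))$ (where one must be slightly careful: $S$ maps into $G$, so to get an $n$-homogeneous polynomial one composes $v$ with an $n$-homogeneous polynomial $E \to G$) is weakly continuous on bounded sets precisely when $v$ restricted to the relevant subspace is compact; more precisely, $\mathcal{P}_w$ pulls back to $\mathcal{L}_K$ under these substitutions.

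Second, I would set up the polynomial analogue carefully. Fix a norm-one functional $\varphi \in E'$ and the vector-valued polynomial $P_\varphi: E \to G$ given by $P_\varphi(x) = \varphi(x)^{n-1} S(x)$, which is $n$-homogeneous. Then define $\Phi: \mathcal{L}(G;F) \to \mathcal{P}(^{n}E;F)$ by $\Phi(v) = v \circ P_\varphi$. One checks that $\Phi$ is a bounded linear injection (injectivity using that $S$ and $\varphi$ are nonzero on a common piece of $E$), that $\Phi(\mathcal{L}_K(G;F)) \subseteq \mathcal{P}_w(^{n}E;F)$, and — this is the crucial converse — that $\Phi^{-1}(\mathcal{P}_w(^{n}E;F)) = \mathcal{L}_K(G;F)$ up to the usual identifications, because a polynomial of the above special form is weakly continuous on bounded sets iff its ``linear part'' $v \circ S$ sends bounded weakly convergent nets to norm-convergent nets. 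I would then need a bounded linear left inverse $\Psi: \mathcal{P}(^{n}E;F) \to \mathcal{L}(G;F)$ with $\Psi \circ \Phi = \mathrm{id}$ and $\Psi(\mathcal{P}_w(^{n}E;F)) \subseteq \mathcal{L}_K(G;F)$; this is obtained by a polarization/evaluation argument, roughly $\Psi(Q)(g) = $ an appropriate symmetric multilinear evaluation of $\widecheck{Q}$ at $g$ and a fixed point. With $\Phi$ and $\Psi$ in hand, a projection $\pi: \mathcal{P}(^{n}E;F) \to \mathcal{P}_w(^{n}E;F)$ would yield $\Psi \circ \pi \circ \Phi$ as a projection of $\mathcal{L}(G;F)$ onto $\mathcal{L}_K(G;F)$.

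Third, I must verify the hypotheses of the linear theorem are met for the data $(G,R,S)$ — but these are exactly the hypotheses assumed in Theorem \ref{thm:(Teorema 12233)}: $(R(g_n))$ semi-normalized basic in $F$ and $(S'(g_n'))$ not relatively compact in $E'$. By Theorem \ref{thm:(Teorema 1009)}(a), or equivalently by the Emmanuele--John argument, $\mathcal{L}_K(E;F)$ is not complemented in $\mathcal{L}(E;F)$; in fact the standard proof produces an isomorphic copy of $c_0$ inside $\mathcal{L}_K(E;F)$ sitting ``compatibly'', and the real content I need is that $c_0$ embeds into $\mathcal{P}_w(^{n}E;F)$ via $\Phi$ applied to that copy, and that any projection onto $\mathcal{P}_w(^{n}E;F)$ would contradict the non-complementation downstairs. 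So the logical skeleton is: copy of $c_0$ in $\mathcal{L}_K(E;F)$ (from the hypotheses, by John's factorization argument) $\Rightarrow$ copy of $c_0$ in $\mathcal{P}_w(^{n}E;F)$ via $\Phi$ $\Rightarrow$ (by an Emmanuele-type argument, or by the companion theorem about $c_0$ in the excerpt's abstract) $\mathcal{P}_w(^{n}E;F)$ not complemented in $\mathcal{P}(^{n}E;F)$.

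The main obstacle I anticipate is the converse inclusion $\Psi(\mathcal{P}_w) \subseteq \mathcal{L}_K$ and, dually, that $\Phi^{-1}(\mathcal{P}_w) = \mathcal{L}_K$: one must show that weak continuity on bounded sets of the full $n$-homogeneous polynomial forces compactness of the associated linear operator, which requires choosing the vector-valued polynomial $P_\varphi$ and the evaluation map $\Psi$ so that the ``$\mathcal{P}_w$ versus $\mathcal{P}$'' dichotomy restricts faithfully to the ``$\mathcal{L}_K$ versus $\mathcal{L}$'' dichotomy — morally because $P_w(^nE;F)$ is generated by products of scalar polynomials with compact-type linear pieces. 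A secondary technical point is keeping track of constants so that $\Phi$ and $\Psi$ are genuine isomorphic embeddings with bounded inverse on their ranges, so that complementation is actually transported; polarization inequalities handle this but must be stated cleanly. Once these identifications are nailed down, the rest is a diagram chase.
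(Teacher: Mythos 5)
Your high-level plan—transporting a hypothetical projection at the polynomial level down to the linear level by multiplying with powers of a functional and undoing this with an evaluation-type left inverse—is exactly the mechanism the paper uses (its Theorem \ref{thm:(Teorema 11)}, proved by induction on $n$ via the Aron--Schottenloher embedding $A(P)=\varphi_{0}P$, the explicit left inverse $B$ of \c{C}aliskan--Rueda, and the stability Propositions \ref{thm:(Teorema 111)} and \ref{thm:(Teorema 112)}, with base case the linear Theorem \ref{thm:(Teorema 10)}(a)). But your implementation has a genuine gap: you run the reduction through $\mathcal{L}(G;F)$ instead of $\mathcal{L}(E;F)$. With $\Phi(v)=v\circ P_{\varphi}$, $P_{\varphi}(x)=\varphi(x)^{n-1}S(x)$, the only trace of $v$ in $\Phi(v)$ is $v\circ S$. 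Consequently: (i) $\Phi$ is injective only when $S$ has dense range (any $v$ vanishing on $\overline{S(E)}$ gives $\Phi(v)=0$), so a bounded linear $\Psi$ with $\Psi\circ\Phi=\mathrm{id}_{\mathcal{L}(G;F)}$ cannot exist in general; moreover your proposed $\Psi(Q)(g)=$ ``evaluation of $\check{Q}$'' is not even well defined, since $\check{Q}$ takes its arguments in $E$ and nothing in the data maps $G$ back into $E$. (ii) The pivotal identification $\Phi^{-1}(\mathcal{P}_{w}(^{n}E;F))=\mathcal{L}_{K}(G;F)$ is false: weak continuity on bounded sets of $\varphi^{n-1}\cdot(v\circ S)$ forces only $v\circ S$ to be compact, which says nothing about $v$ away from $\overline{S(E)}$ (a non-compact $v$ annihilating $\overline{S(E)}$ has $\Phi(v)=0\in\mathcal{P}_{w}$ ). (iii) There is also a mismatch in the intended contradiction: the hypotheses (via Theorem \ref{thm:(Teorema 1009)}(a), or Emmanuele--John applied to the non-compact operator $R\circ S$, whose adjoint maps the bounded biorthogonals of $(R(g_{n}))$ onto $(S^{\prime}(g^{\prime}_{n}))$) give non-complementation of $\mathcal{L}_{K}(E;F)$ in $\mathcal{L}(E;F)$, whereas your $\Psi\circ\pi\circ\Phi$ is aimed at a projection of $\mathcal{L}(G;F)$ onto $\mathcal{L}_{K}(G;F)$ (or onto an unspecified $\mathcal{L}_{K}(E_{0};F)$), whose non-complementation is never established.

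The repair is to forget $G$ and $S$ in the reduction step and work over $E$ itself: fix $\varphi_{0}\in E^{\prime}$ and $x_{0}\in E$ with $\|\varphi_{0}\|=\varphi_{0}(x_{0})=1$, set $\Phi(T)=\varphi_{0}^{n-1}T$ and $\Psi(Q)(x)=n\,\check{Q}(x_{0},\ldots,x_{0},x)-(n-1)\varphi_{0}(x)Q(x_{0})$. Then $\Psi\circ\Phi=\mathrm{id}_{\mathcal{L}(E;F)}$, $\Phi(\mathcal{L}_{K}(E;F))\subset\mathcal{P}_{w}(^{n}E;F)$, and $\Psi(\mathcal{P}_{w}(^{n}E;F))\subset\mathcal{L}_{K}(E;F)$ because the linear section $x\mapsto\check{Q}(x_{0},\ldots,x_{0},x)$ of a polynomial in $\mathcal{P}_{w}$ is compact; a projection onto $\mathcal{P}_{w}(^{n}E;F)$ then yields a projection of $\mathcal{L}(E;F)$ onto $\mathcal{L}_{K}(E;F)$, contradicting the linear case, which is where $G$, $R$, $S$ are actually used. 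This corrected argument is essentially the paper's inductive step collapsed from degree $n$ to degree $1$; the paper's alternative route (Theorem \ref{thm:(Teorema 1111)}) instead uses Ryan's isomorphism $\mathcal{P}(^{n}E;F)\cong\mathcal{L}(\hat{\otimes}_{n,s,\pi}E;F)$ together with Blasco's complementation of $E$ in $\hat{\otimes}_{n,s,\pi}E$, but that version applies to $\mathcal{K}\circ\mathcal{P}=\mathcal{P}_{K}$ rather than to $\mathcal{P}_{w}$.
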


\begin{theorem}\label{cor angiev}
Let $E$ be an infinite dimensional Banach space and $n>1$. If $\mathcal{P}_{w} (^{n}E; F)$ contains a copy of $c_{0}$, then $\mathcal{P}_{w} (^{n}E; F)$ is not complemented in $\mathcal{P} (^{n}E; F)$.
\end{theorem}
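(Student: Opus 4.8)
The plan is to imitate, at the polynomial level, the Emmanuele--John argument that a copy of $c_{0}$ inside $\mathcal{L}_{K}(E;F)$ obstructs complementation in $\mathcal{L}(E;F)$: arguing by contradiction, I will build a copy of $\ell_{\infty}$ inside $\mathcal{P}(^{n}E;F)$ lying over the given copy of $c_{0}$, and then use the hypothetical projection to extract a bounded projection of $\ell_{\infty}$ onto $c_{0}$, which is impossible by Phillips' theorem.

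\noindent\textbf{Step 1 (setup and reduction).} Suppose $\mathcal{P}_{w}(^{n}E;F)$ contains a copy of $c_{0}$ and, towards a contradiction, that $\pi\colon\mathcal{P}(^{n}E;F)\to\mathcal{P}_{w}(^{n}E;F)$ is a bounded projection. Choose $(P_{k})_{k}\subseteq\mathcal{P}_{w}(^{n}E;F)$ equivalent to the unit vector basis of $c_{0}$; then $\inf_{k}\|P_{k}\|>0$ and $C:=\sup\{\,\|\sum_{k\in A}P_{k}\|:A\subseteq\mathbb{N}\text{ finite}\,\}<\infty$, i.e.\ $\sum_{k}P_{k}$ is weakly unconditionally Cauchy in $\mathcal{P}_{w}(^{n}E;F)$. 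The crux is to produce a bounded linear operator $\Psi\colon\ell_{\infty}\to\mathcal{P}(^{n}E;F)$ with $\Psi(e_{k})=P_{k}$ for all $k$ and with the extra property that $\Psi(\varepsilon)$ is weakly continuous on bounded sets exactly when $\varepsilon\in c_{0}$, so that $\Psi(\ell_{\infty})\cap\mathcal{P}_{w}(^{n}E;F)=\Psi(c_{0})=[P_{k}]$. Granted $\Psi$, the map $T:=\pi\circ\Psi\colon\ell_{\infty}\to\mathcal{P}_{w}(^{n}E;F)$ satisfies $T(e_{k})=\pi(P_{k})=P_{k}$, so $T$ extends the isomorphic embedding $\iota\colon c_{0}\to\mathcal{P}_{w}(^{n}E;F)$, $e_{k}\mapsto P_{k}$.

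\noindent\textbf{Step 2 (construction of $\Psi$: the main obstacle).} Define $\Psi(\varepsilon)$ pointwise by $\Psi(\varepsilon)(x)=\sum_{k}\varepsilon_{k}P_{k}(x)$. The partial sums $\sum_{k\le N}\varepsilon_{k}P_{k}$ are $n$-homogeneous polynomials of norm at most $C\|\varepsilon\|_{\infty}$, and a uniformly bounded, pointwise convergent sequence of $n$-homogeneous polynomials converges to an $n$-homogeneous polynomial of comparable norm; hence everything reduces to the convergence of $\sum_{k}\varepsilon_{k}P_{k}(x)$ in $F$ for each fixed $x\in E$. This is where $n>1$ and $\dim E=\infty$ enter: since each $P_{k}$ is weakly continuous on bounded sets it is uniformly small on $B_{E}\cap V$ for suitable finite-codimensional subspaces $V$, and a gliding-hump (Bessaga--Pe{\l}czy\'nski) argument lets one replace $(P_{k})$ by an equivalent, ``almost disjointly supported'' block sequence; for such a sequence $(P_{k}(x))_{k}$ behaves, for fixed $x$, like a sequence lying in a $c_{0}$-copy, so $\sum_{k}\varepsilon_{k}P_{k}(x)$ converges, while the disjointness forces $\Psi(\varepsilon)$ to fail weak continuity on bounded sets whenever $\varepsilon\notin c_{0}$. (If $F$ is finite-dimensional this step is immediate, as then $\sum_{k}|P_{k}(x)|\le C\|x\|^{n}<\infty$; the genuine difficulty is the case $c_{0}\hookrightarrow F$, which I expect to be the most delicate point of the proof.) Linearity of $\Psi$ and $\|\Psi\|\le C$ are then routine, and $\Psi$ is bounded below because $(P_{k})$ is equivalent to the $c_{0}$-basis.

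\noindent\textbf{Step 3 (reaching the contradiction).} Apply to $T=\pi\circ\Psi$ the classical dichotomy for operators out of $\ell_{\infty}$ (Rosenthal): either $T$ is weakly compact, or $T$ restricts to an isomorphism on a subspace of $\ell_{\infty}$ isomorphic to $\ell_{\infty}$. In the first case $\iota=T|_{c_{0}}$ is weakly compact, so the identity of $c_{0}$ factors through a weakly compact operator and $c_{0}$ would be reflexive --- absurd. In the second case $\mathcal{P}_{w}(^{n}E;F)$ contains a copy of $\ell_{\infty}$; since $\|\Psi(\varepsilon)\|\ge\|\pi\|^{-1}\|T(\varepsilon)\|$, also $W:=\Psi(\ell_{\infty})\cong\ell_{\infty}$ sits inside $\mathcal{P}(^{n}E;F)$ with $W\cap\mathcal{P}_{w}(^{n}E;F)=[P_{k}]\cong c_{0}$, and one must then manufacture out of $\pi$ a bounded projection of $W\cong\ell_{\infty}$ onto $[P_{k}]\cong c_{0}$ --- contradicting Phillips' theorem that $c_{0}$ is uncomplemented in $\ell_{\infty}$ --- which is the part of the argument requiring most care in this $\ell_{\infty}$-fixing case. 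In either event we reach a contradiction, so $\mathcal{P}_{w}(^{n}E;F)$ is not complemented in $\mathcal{P}(^{n}E;F)$.
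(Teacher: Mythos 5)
Your overall strategy (Emmanuele--John style: turn the $c_{0}$-copy into an $\ell_{\infty}$-sized object and play it against the hypothetical projection) is the right circle of ideas, but both of the steps you yourself flag as delicate are genuine gaps, and the first one cannot be repaired as stated. In Step 2 the pointwise definition $\Psi(\varepsilon)(x)=\sum_{k}\varepsilon_{k}P_{k}(x)$ simply does not converge when $c_{0}\hookrightarrow F$: take $P_{k}=\varphi^{n}\otimes y_{k}$, i.e. $P_{k}(x)=\varphi(x)^{n}y_{k}$, where $\varphi\in E^{\prime}$, $\|\varphi\|=1$ and $(y_{k})$ is a copy of the $c_{0}$-basis in $F$. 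This is a copy of the $c_{0}$-basis inside $\mathcal{P}_{w}(^{n}E;F)$, yet for $\varepsilon=(1,1,\dots)$ and any $x$ with $\varphi(x)\neq 0$ the series $\sum_{k}\varepsilon_{k}P_{k}(x)=\varphi(x)^{n}\sum_{k}y_{k}$ diverges; moreover any block sequence of $(P_{k})$ has the same form $\varphi^{n}\otimes z_{j}$ with $(z_{j})$ a block basis of $(y_{k})$, so the gliding-hump/``almost disjoint support'' replacement you invoke cannot restore convergence, nor can it force $\Psi(\varepsilon)\notin\mathcal{P}_{w}$ for $\varepsilon\notin c_{0}$. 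The correct dichotomy is not ``$F$ finite dimensional vs. $c_{0}\hookrightarrow F$'' but ``$F$ contains $c_{0}$ or not'': if $F$ contains no copy of $c_{0}$, the weak unconditional Cauchyness of $\sum_{k}P_{k}(x)$ plus Bessaga--Pe{\l}czy\'nski gives unconditional convergence and the construction makes sense; the case $c_{0}\hookrightarrow F$ must be disposed of by an entirely different argument. This is exactly how the paper proceeds (Proposition \ref{cor 38}, which contains this theorem for $\mathcal{J}=\mathcal{L}_{K}$): it first reduces, via Corollary \ref{cor 33} and the $\ell_{1}$-lemma (Lemma \ref{cor 3901}), to the case where $F$ contains no copy of $c_{0}$ and $E$ contains no complemented copy of $\ell_{1}$, and only then defines the finitely additive measure $\mu(A)(x)=\sum_{i\in A}P_{i}(x)$.

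Step 3 has a second gap. In the non-weakly-compact horn your $\ell_{\infty}$-copy $T(\ell_{\infty})$ lies inside $\mathcal{P}_{w}(^{n}E;F)$ (since $T=\pi\circ\Psi$ maps there), and there is no contradiction to be had from that alone: $\mathcal{P}_{w}(^{n}E;F)$ can contain $\ell_{\infty}$ (again $F=\ell_{\infty}$, $\lambda\mapsto\varphi^{n}\otimes\lambda$ is an isometric embedding into the finite-type polynomials). Nor does $\pi$ ``manufacture'' a projection of $W=\Psi(\ell_{\infty})$ onto $[P_{k}]$: $\pi|_{W}$ has range in all of $\mathcal{P}_{w}(^{n}E;F)$, not in $[P_{k}]$, and the $c_{0}$-copy $[P_{k}]$ need not be complemented there (the space is nonseparable, so Sobczyk is unavailable), so Phillips' theorem is never reached. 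The paper closes this hole differently: after the reduction above it quotes Gonz\'alez--Guti\'errez (\cite[Theorem 3]{GONZA}) to guarantee that $\mathcal{P}_{w}(^{n}E;F)$ (more generally $\widehat{(\mathcal{J}\circ\mathcal{L})^{fac}}(^{n}E;F)$) contains no copy of $\ell_{\infty}$, and then gets the contradiction by applying the Diestel--Faires theorem to the measure $\pi\circ\mu$, concluding $\|P_{i}\|\to 0$, which is absurd. So your proposal needs (i) the separate treatment of $c_{0}\hookrightarrow F$ (and, on the paper's route, of a complemented $\ell_{1}$ in $E$), and (ii) a substitute for the unproved ``projection onto $c_{0}$'' step, e.g. the no-$\ell_{\infty}$-copy theorem plus Diestel--Faires, before it becomes a proof.
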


\begin{theorem}\label{cor olga}
 Let $E$ and $F$ be Banach spaces, with $E$ infinite dimensional, and let $n>1$. If each $P\in \mathcal{P}(^{n}E; F)$ such that $P\notin \mathcal{P}_{w}(^{n}E; F)$ admits a factorization $P=Q\circ T$, where $T\in \mathcal{L}(E;G)$, $Q\in \mathcal{P}(^{n}G;F)$ and $G$ is a Banach space with an unconditional finite dimensional expansion of the identity, then the following conditions are equivalent:
 \begin{enumerate}

\item [(1)] $\mathcal{P}_{w} (^{n}E; F)$ contains a copy of $c_{0}$.
\item [($1^{\prime}$)] $\mathcal{P}_{K} (^{n}E; F)$ contains a copy of $c_{0}$.
\item [(2)] $\mathcal{P}_{w} (^{n}E; F)$ is not complemented in $\mathcal{P} (^{n}E; F)$.
\item [($2^{\prime}$)] $\mathcal{P}_{K} (^{n}E; F)$ is not complemented in $\mathcal{P} (^{n}E; F)$.
\item [(3)] $\mathcal{P}_{w} (^{n}E; F)\neq \mathcal{P} (^{n}E; F)$.
\item [($3^{\prime}$)] $\mathcal{P}_{K} (^{n}E; F)\neq \mathcal{P} (^{n}E; F)$.
\item [(4)] $\mathcal{P}(^{n}E; F)$ contains a copy of $c_{0}$.
\item [(5)] $\mathcal{P}(^{n}E; F)$ contains a copy of $\ell_{\infty}$.
\end{enumerate}
 \end{theorem}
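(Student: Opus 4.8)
The plan is to prove the eight conditions equivalent by a cycle of implications, using Theorem~\ref{thm:(Teorema 12233)} and Theorem~\ref{cor angiev} for the substantive links and the inclusions $\mathcal{P}_{w}(^{n}E;F)\subseteq\mathcal{P}_{K}(^{n}E;F)\subseteq\mathcal{P}(^{n}E;F)$ (the first inclusion because a polynomial weakly continuous on bounded sets is in fact weakly uniformly continuous on bounded sets, hence sends bounded sets to relatively compact sets) for the routine ones. First I would dispose of the free implications: $(1)\Rightarrow(1')$, $(1')\Rightarrow(4)$ and $(5)\Rightarrow(4)$, because a copy of $c_{0}$, respectively of $\ell_{\infty}$, inside a subspace is a copy inside the ambient space; $(3')\Rightarrow(3)$, because $\mathcal{P}_{K}(^{n}E;F)\neq\mathcal{P}(^{n}E;F)$ forces $\mathcal{P}_{w}(^{n}E;F)\neq\mathcal{P}(^{n}E;F)$; and $(2)\Rightarrow(3)$, $(2')\Rightarrow(3')$, which are the contrapositives of the triviality that a subspace equal to the whole space is complemented in it.

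The decisive step, and the one I expect to be the main obstacle, is $(3)\Rightarrow(1)$, where the factorization hypothesis enters. Given $P\in\mathcal{P}(^{n}E;F)\setminus\mathcal{P}_{w}(^{n}E;F)$, I would write $P=Q\circ T$ with $T\in\mathcal{L}(E;G)$, $Q\in\mathcal{P}(^{n}G;F)$ and $G$ admitting an unconditional finite-dimensional expansion of the identity $(u_{k})$. The first observation is that $T$ cannot be compact: a compact $T$ would carry bounded weakly convergent nets to norm convergent ones and then, $Q$ being continuous, $P=Q\circ T$ would be weakly continuous on bounded sets, against the choice of $P$. Next, using the non-compactness of $T$ and the unconditional expansion of $G$, I would adapt the construction of John and Ghenciu in its polynomial form (the one underlying Theorem~\ref{thm:(Teorema 12233)}, cf.\ \cite{KA},\cite{IOANA},\cite{SERGIO}): passing to an unconditional basic sequence $(g_{k})$ with coordinate functionals $(g_{k}')$ built from blocks of $(u_{k})$, one obtains $S=T\in\mathcal{L}(E;G)$ --- whose adjoint sends $(g_{k}')$ to a non-relatively-compact set, because $T$ is non-compact --- and a linear operator $R\in\mathcal{L}(G;F)$ extracted from $Q$ through its associated symmetric $n$-linear map, with $(R(g_{k}))$ semi-normalized basic in $F$; the $c_{0}$-sequence that the construction produces consists of finite-type polynomials, so it lives inside $\mathcal{P}_{w}(^{n}E;F)$, which is what $(1)$ asserts. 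The two delicate points are that an unconditional finite-dimensional expansion of the identity, rather than a genuine unconditional basis, still suffices (handled by the usual blocking/perturbation device extracting an unconditional basic sequence from the tails of the expansion) and that the $c_{0}$-sequence indeed lands in $\mathcal{P}_{w}(^{n}E;F)$ and not merely in $\mathcal{P}(^{n}E;F)$.

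With $(3)\Rightarrow(1)$ in hand the two triangles close quickly. For the unprimed conditions, $(1)\Rightarrow(2)$ is exactly Theorem~\ref{cor angiev} --- the only place where the hypothesis $n>1$ is genuinely used --- and $(2)\Rightarrow(3)$ is free, so $(1)\Leftrightarrow(2)\Leftrightarrow(3)$. For the primed conditions, $(1)\Rightarrow(1')$ is free; $(1')\Rightarrow(2')$ is the Emmanuele--John phenomenon for compact polynomials, namely that a copy of $c_{0}$ in $\mathcal{P}_{K}(^{n}E;F)$ prevents it from being complemented in $\mathcal{P}(^{n}E;F)$, proved exactly as Theorem~\ref{cor angiev} with $\mathcal{P}_{K}$ in place of $\mathcal{P}_{w}$; $(2')\Rightarrow(3')$ is free; and $(3')\Rightarrow(3)\Rightarrow(1)\Rightarrow(1')$ then closes $(1')\Leftrightarrow(2')\Leftrightarrow(3')$ and welds the primed cycle to the unprimed one.

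Finally I would fold in $(4)$ and $(5)$. Since $(1)\Rightarrow(1')\Rightarrow(4)$ and $(5)\Rightarrow(4)$ are already available, it is enough to prove $(4)\Rightarrow(3)$ and $(1)\Rightarrow(5)$. For $(4)\Rightarrow(3)$ I would argue by contraposition: if $\mathcal{P}_{w}(^{n}E;F)=\mathcal{P}(^{n}E;F)$ and $\mathcal{P}(^{n}E;F)$ contained a copy of $c_{0}$, then $(1)$ would hold and, by Theorem~\ref{cor angiev}, $\mathcal{P}_{w}(^{n}E;F)$ would not be complemented in $\mathcal{P}(^{n}E;F)=\mathcal{P}_{w}(^{n}E;F)$, which is absurd. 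For $(1)\Rightarrow(5)$ I would invoke the polynomial analogue of Kalton's theorem (cf.\ \cite{KALTON},\cite{SERGIO}): a copy of $c_{0}$ in $\mathcal{P}_{w}(^{n}E;F)$ forces a copy of $\ell_{\infty}$ in $\mathcal{P}(^{n}E;F)$. Assembling these implications establishes the equivalence of the eight statements. \fin
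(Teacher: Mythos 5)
Your overall architecture coincides with the paper's (which proves this statement in the generalized form of Corollary~\ref{cor 3000000}, via Proposition~\ref{cor 38} and Theorem~\ref{thm:(Teorema 1797)} specialized to $\mathcal{J}=\mathcal{L}_{K}$, where $\widehat{(\mathcal{J}\circ\mathcal{L})^{fac}}(^{n}E;F)=\mathcal{P}_{w}(^{n}E;F)$): the cycle $(1)\Rightarrow(2)\Rightarrow(3)\Rightarrow(1)$, the contradiction argument for $(4)\Rightarrow(3)$, and the passage to $\ell_{\infty}$ for $(5)$ are all as in the paper. The genuine gap is in your decisive step $(3)\Rightarrow(1)$. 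The construction you invoke --- producing $R\in\mathcal{L}(G;F)$ with $(R(g_{k}))$ semi-normalized basic and $S=T$ with $(S^{\prime}(g_{k}^{\prime}))$ not relatively compact --- is the mechanism behind Theorem~\ref{thm:(Teorema 12233)}, and what that mechanism yields (via the vector-measure argument of Ghenciu--Lewis) is non-complementation of $\mathcal{P}_{w}(^{n}E;F)$, i.e.\ condition $(2)$; it does not manufacture any copy of $c_{0}$ inside $\mathcal{P}_{w}(^{n}E;F)$, so it cannot give $(1)$. Moreover neither ingredient is justified: non-compactness of $T$ does not imply that the particular sequence $(T^{\prime}(g_{k}^{\prime}))$ of (blocked) coordinate functionals fails to be relatively compact, and there is no canonical way to extract a linear $R$ with $(R(g_{k}))$ semi-normalized basic from the $n$-homogeneous $Q$ ``through its associated symmetric $n$-linear map''. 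The two ``delicate points'' you flag are exactly where the proof lives, and you leave them unresolved.

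The paper's actual argument for this step (Theorem~\ref{thm:(Teorema 1797)}) is different and avoids both difficulties: writing $g=\sum_{i}A_{i}(g)$ with the finite-rank expansion operators, one decomposes $Q(g)=\sum_{m}Q_{m}(g)$ with $Q_{m}(g)=\sum_{\max\{i_{1},\ldots,i_{n}\}=m}\check{Q}(A_{i_{1}}(g),\ldots,A_{i_{n}}(g))$, checks that each $Q_{m}\circ T$ lies in $\mathcal{P}_{w}(^{n}E;F)$ (it factors through finite-dimensional ranges), shows that $\sum_{m}Q_{m}\circ T$ is weakly unconditionally Cauchy by the uniform boundedness principle, and observes that this series cannot converge unconditionally in the closed subspace $\mathcal{P}_{w}(^{n}E;F)$ because its sum would be $P\notin\mathcal{P}_{w}(^{n}E;F)$; then Bessaga--Pelczynski \cite[p.\ 45, Theorem 8]{DIESTELL} gives $c_{0}\hookrightarrow\mathcal{P}_{w}(^{n}E;F)$. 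Note that no unconditional basic sequence is ever extracted: the expansion of the identity is used directly, which disposes of your first delicate point. Two smaller remarks: your claim that $(1^{\prime})\Rightarrow(2^{\prime})$ is proved ``exactly as Theorem~\ref{cor angiev} with $\mathcal{P}_{K}$ in place of $\mathcal{P}_{w}$'' is not innocuous, since the paper's tool for $\mathcal{P}_{K}=\mathcal{K}\circ\mathcal{P}$ (Corollary~\ref{cor 36} and Proposition~\ref{cor 37}) also requires $F$ infinite dimensional (if $F$ is finite dimensional then $\mathcal{P}_{K}(^{n}E;F)=\mathcal{P}(^{n}E;F)$ is trivially complemented), and it is no accident that the paper's own generalization, Corollary~\ref{cor 3000000}, omits the primed conditions; and for $(1)\Rightarrow(5)$ the paper does not merely cite a polynomial Kalton theorem but transfers to $\mathcal{L}(\hat{\otimes}_{n,s,\pi}E;F)$ via Ryan's isomorphism and combines \cite{KALTON} with \cite{KA}.
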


In this paper, we obtain polynomial ideal versions of the preceding results.
\bigskip
\section{Preliminaries}

Let $E$ and $F$ denote Banach spaces over $ \mathbb{K}$, where $ \mathbb{K}$ is $ \mathbb{R}$ or  $\mathbb{C}$. Let $E^{\prime}$
denote the dual of $E$. Let $B_{E}$ denote the unit ball of $E$. By $J_{E}$ we mean the canonical embedding from $E$ to $E^{\prime\prime}$. We denote by $(e_{n})$ the canonical basis of $c_{0}$. Denote by $ \mathcal{L}(E;F)$, $\mathcal{L}_{K}(E;F)$, $\mathcal{F}(E;F)$, $\mathcal{K}_{p}(E;F)$, $\mathcal{QN}_{p}(E;F)$ and $\mathcal{L}_{wK}(E;F)$, respectively, the spaces of all bounded, all compact, all finite range, all $p-$compact, all quasi $p-$nuclear and all weakly compact linear operators from $E$ into $F$. Let $\mathcal{P}(^{n}E; F)$ denote the Banach space of all continuous $n$-homogeneous polynomials from $E$ into $F$. We omit $F$ when $F = \mathbb{K}$. Given a continuous $n-$homogeneous polynomial $P:E\rightarrow F$, by $\check{P}$ we mean the (unique) symmetric $n-$ linear operator  such that
$\check{P}(x,\ldots,x)=P(x)$ for every $x\in E$.
Let $\mathcal{P}_{w}(^{n}E; F)$ denote the subspace of all $P\in\mathcal{P}(^{n}E; F)$
which are weakly continuous on bounded sets, that is
the restriction $P|_{B}: B \rightarrow F $ is continuous for each bounded set $B\subset E$,
when $B $ and $F$ are endowed with the weak topology and the norm topology,
respectively. Let $\mathcal{P}_{K}(^{n}E; F)$ denote the
subspace of all $P\in \mathcal{P}(^{n}E; F)$  which map bounded sets onto relatively compact sets.
Let $\mathcal{P}_{wK}(^{n}E; F)$ denote the
subspace of all $P\in \mathcal{P}(^{n}E; F)$  which map bounded sets onto relatively weakly compact sets.

We always have the inclusions
$$ \mathcal{P}_{w}(^{n}E; F)\subset \mathcal{P}_{K}(^{n}E; F)\subset\mathcal{P}_{wK}(^{n}E; F)\subset \mathcal{P}(^{n}E; F),$$
as can be derived from results in \cite{AR}.
We refer to  \cite{MUJICA} for background information on the theory of polynomials on Banach spaces.
 \bigskip

By $E_{1}\hat{\otimes}_{\pi}\ldots\hat{\otimes}_{\pi}E_{n}$ we denote the completed projective tensor product of
$E_{1}, \ldots,E_{n}$. If $E_{1} = \ldots= E_{n} = E$ we write $\hat{\otimes}_{n,\pi}E$. By $\hat{\otimes}_{n,s,\pi}E$ we denote the
$n-$fold completed symmetric tensor product of $E$.
\bigskip

$E$ is isomorphic to a complemented subspace of $F$ if and only if there are $A \in
\mathcal{L}(E; F)$ and $B\in \mathcal{L}(F;E)$ such that $B\circ A = I$. $E$ is said to have an unconditional finite dimensional expansion of the identity if there is a sequence of bounded linear operators $A_{n}:E\rightarrow E$ of finite rank, such that for $x\in E$
$$\sum_{n=1}^{\infty}A_{n}(x)=x$$
unconditionally.

We will say that the series $\displaystyle\sum_{n=1}^{\infty}x_{n}$ of elements of $E$ is weakly unconditionally Cauchy if $\displaystyle\sum_{n=1}^{\infty}|x^{\prime}(x_{n})|<\infty$ for all $x^{\prime}\in E^{\prime}$ or, equivalently if
$$\sup\bigg\{\bigg\|\sum_{n\in F}x_{n}\bigg\| ; F\subset \mathbb{N}, F finite\bigg\}<\infty.$$

A sequence $(x_{n})\subset E$ is a semi-normalized basic sequence if $(x_{n})$ is a Schauder basis for the closed subspace $M =
\overline{[x_{n} : n\in \mathbb{N}]}$, and moreover there are constant $a$ and $b$ such that $0< a<\|x_{n}\|<b$ for all $n\in\mathbb{N}$.
We denote by $(e_{n})$ the canonical basis of $c_{0}$. If $\Sigma$ is an algebra of subsets of a set $\Omega$, then a finitely additive vector
measure $\mu:\Sigma\rightarrow E$ is said to be strongly additive if the series $\displaystyle\sum_{n=1}^{\infty}\mu(A_{n})$ converges in norm for each
sequence $(A_{n})$ of pairwise disjoint members of $\Sigma$.
The Diestel-Faires Theorem (see \cite[p. 20, Theorem 2]{DIESTEL}) asserts that if $\Sigma$ is a $\sigma-$ algebra and there is a measure $\mu:\Sigma\rightarrow E$ which is not strongly additive, then $E$ contains an isomorphic copy of $\ell_{\infty}$.
\begin{remark}
The proof of the Diestel-Faires Theorem \cite[p. 20, Theorem 2]{DIESTEL} does not ensure the existence of a topological isomorphism, indeed,
only can be proved that there exists a copy of $c_{0}$ and $\ell_{\infty}$ in $E$, respectively.
\end{remark}

In \cite{PELLEGRINO} is defined an ideal of \textit{ multilinear mappings} (or \textit{multi-ideal}) $\mathcal{M}$ as a subclass of the
class of all continuous multilinear mappings between Banach spaces such that
for all $n\in\mathbb{N}$ and Banach spaces $E_{1},\ldots,E_{n}$ and $F$ the components $\mathcal{M}(E_{1},\ldots,E_{n};F)=\mathcal{L}(E_{1},\ldots,E_{n};F)\cap \mathcal{M}$ satisfy:

\begin{enumerate}

\item [(i)] $\mathcal{M}(E_{1},\ldots,E_{n};F)$ is a linear subspace of $\mathcal{L}(E_{1},\ldots,E_{n};F)$  which contains
the $n-$linear mappings of finite type.

\item [(ii)] The ideal property: if $A\in \mathcal{M}(E_{1},\ldots,E_{n};F)$, $u_{j}\in \mathcal{L}(G_{j},E_{j})$ for $j=1,\ldots,n$ and $t\in \mathcal{L}(F;H)$, then $t\circ A\circ (u_{1},\ldots,u_{n}) \in \mathcal{M}(G_{1},\ldots,G_{n};H)$.
\end{enumerate}
If $\|·\|_{\mathcal{M}}:\mathcal{M}\rightarrow \mathbb{R}^{+}$  satisfies

\begin{enumerate}
\item [($i^{\prime}$)] $(\mathcal{M}(E_{1},\ldots,E_{n};F),\|\cdot\|_{\mathcal{M}})$  is a normed (Banach) space for all Banach spaces
$E_{1},\ldots,E_{n}$, $F$ and all $n\in\mathbb{N}$.
\item [($ii^{\prime}$)] $\|A^{n}:\mathbb{K}^{n}\rightarrow \mathbb{K} : A^{n}(x_{1},\ldots,x_{n})=x_{1}\ldots x_{n}\|_{\mathcal{M}}=1$ for all $n\in\mathbb{N}$.
\item [($iii^{\prime}$)] Si $A\in \mathcal{M}(E_{1},\ldots,E_{n};F)$, $u_{j}\in \mathcal{L}(G_{j},E_{j})$ for $j=1,\ldots,n$ and $t\in \mathcal{L}(F;H)$, then $\|t\circ A\circ (u_{1},\ldots,u_{n})\|_{\mathcal{M}}\leq \|t\|\|A\|_{\mathcal{M}}\|u_{1}\|\ldots\|u_{n}\|$,
    then $(\mathcal{M},\|\cdot\|_{\mathcal{M}})$ is called a normed (Banach) multi-ideal.

The multi-ideal $\mathcal{M}$ is said to be closed if each $\mathcal{M}(E_{1},\ldots,E_{n};F)$ is a closed subspace of $\mathcal{L}(E_{1},\ldots,E_{n};F)$.
\end{enumerate}
\bigskip

An ideal of \textit{homogeneous polynomials} (or \textit{polynomial ideal}) $\mathcal{Q}$ is a subclass
of the class of all continuous homogeneous polynomials between Banach
spaces such that for all $n\in\mathbb{N}$ and Banach spaces $E$ and $F$, the components
$\mathcal{Q}(^{n}E;F)=\mathcal{P}(^{n}E;F)\cap \mathcal{Q}$ satisfy:
\begin{enumerate}

\item [(i)] $\mathcal{Q}(^{n}E;F)$ is a linear subspace of $\mathcal{P}(^{n}E;F)$ which contains the $n-$homogeneous polynomials of finite type.
\item [(ii)] The ideal property: if $u\in\mathcal{L}(G;E)$, $P\in \mathcal{Q}(^{n}E;F)$ and $t\in\mathcal{L}(F,H)$, then
the composition $t\circ P\circ u$ is in $\mathcal{Q}(^{n}G;H)$.

\end{enumerate}

When  $n=1$ $\mathcal{Q}$ is called an \textbf{operator ideal}.

\begin{definition}\label{corolario 1}
Let $\mathcal{I}$ be an operator ideal.
\begin{enumerate}
\item [(a)] An application $n-$ linear $A\in\mathcal{L}(E_{1},E_{2},\ldots, E_{n}; F)$ belongs to $\mathcal{I}\circ \mathcal{L}$, in this case
we write $A\in \mathcal{I}\circ \mathcal{L}(E_{1},E_{2},\ldots, E_{n}; F)$, if there are a Banach space $G$, an $n-$ linear mapping $B\in\mathcal{L}(E_{1},E_{2},\ldots, E_{n}; G)$ and an operator $u\in \mathcal{I}(G;F)$, such that $A=u\circ B$.

\item [(b)] An $n-$homogeneous polynomial $P\in \mathcal{P}(^{n}E; F)$ belongs to $\mathcal{I}\circ \mathcal{P}$ - in this case
we write $P\in \mathcal{I}\circ \mathcal{P}(^{n}E; F)$ - if there are a Banach space $G$, an $n-$homogeneous
polynomial $Q\in \mathcal{P}(^{n}E;G)$ and an operator $u\in \mathcal{I}(G; F)$ such that $P = u \circ Q$.
\end{enumerate}
\end{definition}

\begin{example}
Let $\mathcal{K}$ and $\mathcal{W}$ be the closed operator ideals formed by all compact and
weakly compact linear operators, respectively. By $\mathcal{P}_{\mathcal{K}}$ and $\mathcal{P}_{\mathcal{W}}$ we mean the
classes of all compact and weakly compact polynomials, respectively. The
equalities $\mathcal{P}_{\mathcal{K}} = \mathcal{K}\circ\mathcal{P}$ and $\mathcal{P}_{\mathcal{W}}=\mathcal{W}\circ\mathcal{P}$ were proved by R. Ryan \cite{RYAN}.
\end{example}

\begin{definition}
Given an operator ideal $\mathcal{I}$ and Banach spaces $E$ and $F$, we define $\mathcal{I}^{dual}(E;F)=\{T\in \mathcal{L}(E;F) : T^{\prime}\in\mathcal{I}(F^{\prime}; E^{\prime})\}$.

\end{definition}

\begin{definition}
Given a Banach space $E$ and an operator ideal $\mathcal{I}$. The collection of all $\mathcal{I}$ bounded sets of $E$ is defined by $\mathcal{C}_{I}(E)=\{A\subset E: \exists \ T\in\mathcal{I}(Z;E) \ \textit{for some Banach  space } \ $Z$\  \textit{such that}\  A\subset T(B_{Z})\}$.

If $\mathcal{I}=\mathcal{K}$ or $\mathcal{I}=\mathcal{W}$ the compact and weakly compact operator ideals, respectively, then
$\mathcal{C}_{I}(E)$ are the relatively compact and relatively weakly compact sets of $E$, respectively.
\end{definition}

\bigskip
In \cite{GERALDO}, given a multi-ideal $\mathcal{M}$, is constructed an ideal $\mathcal{M}^{fac}$ such that every $n-$ linear operator $A$ in $\mathcal{M}^{fac}$ is strongly $\mathcal{M}-$ factorable in the sense  that it factors through multilinear operator in $\mathcal{M}$ with respect
to any partition of the set $\{1,\ldots,n\}$. For $\mathcal{L}_{K}=\mathcal{K}\circ\mathcal{L}$ (compact multilinear operators) we have that
$\mathcal{L}_{K}^{fac}=\mathcal{L}_{wb}$ (weakly continuous on bounded sets multilinear operators).

\section{The main results}

The next result is a generalization of theorem  \ref{thm:(Teorema 1009)}. The proof of the next theorem is based in ideas of \cite[Theorem 6(i)]{LEWIS}.

\begin{theorem} \label{thm:(Teorema 10)}
Let $E$ and $F$ be Banach spaces, and let $G$ be a Banach space with an unconditional basis $(g_{n})$ and coordinate functionals $(g^{\prime}_{n})$.
\begin{enumerate}

\item [(a)] If there exist operators $R\in\mathcal{L}(G; F)$ and $S\in\mathcal{L}(E; G)$ such that $(R(g_{n}))$ is a semi-normalized basic sequence
in $F$ and $(S^{\prime}(g^{\prime}_{n_{k}}))\notin \mathcal{C}_{\mathcal{I}}(E^{\prime})$ for all subsequence $(n_{k})\subset \mathbb{N}$, where $\mathcal{I}$ and $\mathcal{J}$ be closed operator ideals such that $\mathcal{J}\subset \mathcal{I}^{dual}$. Then $\mathcal{J}\circ\mathcal{L}(E; F)$ is not complemented in
$\mathcal{L}(E; F)$.

\item [(b)] If $E$ is a reflexive Banach space and there exist operators $R\in\mathcal{L}(G; F)$ and $S\in\mathcal{L}(E; G)$ such that $(R(g_{n}))$ is a semi-normalized basic sequence in $F$ and  $(S^{\prime}(g^{\prime}_{n_{k}}))\notin \mathcal{C}_{\mathcal{I}}(E^{\prime})$ for all subsequence $(n_{k})\subset \mathbb{N}$, where $\mathcal{I}$ and $\mathcal{J}$ be closed operator ideals such that $\mathcal{J}^{dual}\subset \mathcal{I}$. Then $\mathcal{J}\circ\mathcal{L}(E; F)$ is not complemented in $\mathcal{L}(E; F)$.
\end{enumerate}
\end{theorem}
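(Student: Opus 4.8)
The plan is to assume that $\mathcal{J}\circ\mathcal{L}(E;F)$ is complemented in $\mathcal{L}(E;F)$ and to manufacture from the data $R$, $S$ a bounded projection of $\ell_{\infty}$ onto $c_{0}$, which is impossible by the theorem of Phillips--Sobczyk. Recall that for $n=1$ one has $\mathcal{J}\circ\mathcal{L}(E;F)=\mathcal{J}(E;F)$, a norm-closed subspace of $\mathcal{L}(E;F)$ containing $\mathcal{F}(E;F)$. First I would pass to a subsequence of $(g_{n})$ so that $\|S'(g'_{n})\|\ge\delta>0$ for all $n$ — nothing is lost, since the hypothesis that $(S'(g'_{n_{k}}))\notin\mathcal{C}_{\mathcal{I}}(E')$ for every subsequence passes to subsequences. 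Since $(R(g_{n}))$ is a seminormalized basic sequence, I may fix $\widetilde{m}_{n}\in F'$ with $\langle\widetilde{m}_{n},R(g_{k})\rangle=\delta_{nk}$ and $\sup_{n}\|\widetilde{m}_{n}\|<\infty$, so that $R'(\widetilde{m}_{n})=g'_{n}$; after one more passage to a subsequence (a Bessaga--Pe\l czy\'nski/Rosenthal-type selection) I may moreover take $(\widetilde{m}_{n})$ to be weak$^{*}$-null in $F'$.

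Next I would build two subspaces. For $a\in\ell_{\infty}$ let $D_{a}\colon G\to G$, $D_{a}(\sum c_{n}g_{n})=\sum a_{n}c_{n}g_{n}$, be the diagonal multiplier (bounded, by unconditionality of $(g_{n})$), and put $\Theta(a)=R\circ D_{a}\circ S$. Then $\Theta\colon\ell_{\infty}\to\mathcal{L}(E;F)$ is bounded with $\Theta(e_{n})=T_{n}$, where $T_{n}:=R\circ(g'_{n}\otimes g_{n})\circ S$ has rank one; and since $\langle\widetilde{m}_{k},\Theta(a)x\rangle=a_{k}\langle S'(g'_{k}),x\rangle$ for all $x\in E$, one gets $\|\Theta(a)\|\ge\delta(\sup_{j}\|\widetilde{m}_{j}\|)^{-1}\|a\|_{\infty}$, i.e.\ $\Theta$ is an isomorphic embedding. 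For $a\in c_{0}$ the operator $D_{a}$ is the norm-limit of its finite-rank truncations, so $\Theta(a)\in\overline{\mathcal{F}(E;F)}\subseteq\mathcal{J}(E;F)=\mathcal{J}\circ\mathcal{L}(E;F)$, $\mathcal{J}$ being a closed operator ideal. Writing $C_{0}:=\Theta(c_{0})=\overline{[T_{n}]}\cong c_{0}$ and $L:=\Theta(\ell_{\infty})\cong\ell_{\infty}$, we thus have $C_{0}\subseteq L\subseteq\mathcal{L}(E;F)$ and $C_{0}\subseteq\mathcal{J}\circ\mathcal{L}(E;F)$, with the inclusion $C_{0}\hookrightarrow L$ being, via $\Theta$, the inclusion $c_{0}\hookrightarrow\ell_{\infty}$.

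The heart of the matter is to show that $C_{0}$ is complemented in $\mathcal{J}\circ\mathcal{L}(E;F)$. Pick $x_{n}\in E$ with $\langle S'(g'_{n}),x_{n}\rangle=1$ and $\sup_{n}\|x_{n}\|<\infty$ (possible because $\|S'(g'_{n})\|\ge\delta$), and define $\rho\colon\mathcal{J}\circ\mathcal{L}(E;F)\to\mathbb{K}^{\mathbb{N}}$ by $\rho(U)=\big(\langle\widetilde{m}_{n},U(x_{n})\rangle\big)_{n}=\big(\langle U'(\widetilde{m}_{n}),x_{n}\rangle\big)_{n}$. One checks at once that $\rho$ is bounded into $\ell_{\infty}$ and that $\rho(T_{k})=e_{k}$. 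The key claim is that $\rho(U)\in c_{0}$ for every $U\in\mathcal{J}\circ\mathcal{L}(E;F)$: indeed $\mathcal{J}\subseteq\mathcal{I}^{\mathrm{dual}}$ forces $U'\in\mathcal{I}(F';E')$, hence $(U'(\widetilde{m}_{n}))$ is a weak$^{*}$-null sequence lying in some set of $\mathcal{C}_{\mathcal{I}}(E')$; as such sets are relatively norm-compact in the situations at hand (e.g.\ when $\mathcal{I}\subseteq\mathcal{L}_{K}$), a weak$^{*}$-null sequence in them is norm-null, and so $\langle U'(\widetilde{m}_{n}),x_{n}\rangle\to 0$. Therefore $\rho$ is a bounded operator onto $c_{0}$, and $P_{0}:=\iota\circ\rho$, where $\iota\colon c_{0}\to C_{0}$ is the isomorphism $\iota(e_{n})=T_{n}$, is a bounded projection of $\mathcal{J}\circ\mathcal{L}(E;F)$ onto $C_{0}$.

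To conclude: if $\pi$ is a bounded projection of $\mathcal{L}(E;F)$ onto $\mathcal{J}\circ\mathcal{L}(E;F)$, then $P_{0}\circ\pi|_{L}\colon L\to C_{0}$ is bounded and, since $\pi$ and $P_{0}$ each fix $C_{0}$ pointwise, it is a bounded projection of $L$ onto $C_{0}$; transporting through $\Theta$ this is a bounded projection of $\ell_{\infty}$ onto $c_{0}$ — a contradiction. Thus $\mathcal{J}\circ\mathcal{L}(E;F)$ is not complemented in $\mathcal{L}(E;F)$, proving (a); (b) follows the same scheme, the reflexivity of $E$ and the inclusion $\mathcal{J}^{\mathrm{dual}}\subseteq\mathcal{I}$ replacing $\mathcal{J}\subseteq\mathcal{I}^{\mathrm{dual}}$ in the norm-convergence step (one passes from weak$^{*}$ to weak limits in $E'$, choosing the $x_{n}$ weakly null if necessary). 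I expect the main obstacle to be exactly the key claim above — getting the biorthogonal functionals $(\widetilde{m}_{n})$ weak$^{*}$-null and proving $\langle\widetilde{m}_{n},U(x_{n})\rangle\to 0$ uniformly in $U\in\mathcal{J}\circ\mathcal{L}(E;F)$ — where the inclusions among $\mathcal{J}$, $\mathcal{I}$ and their duals, the condition on $(S'(g'_{n_{k}}))$, and the argument of \cite[Theorem 6(i)]{LEWIS} all come into play.
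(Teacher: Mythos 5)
Your overall scheme (embed $\ell_{\infty}$ into $\mathcal{L}(E;F)$ via diagonal multipliers, note $\Theta(c_{0})\subset\mathcal{J}\circ\mathcal{L}(E;F)$, and contradict Phillips' theorem) is the classical one, but the two steps on which it hinges are genuinely unjustified, and the second one cannot hold at the stated level of generality. First, the selection of uniformly bounded Hahn--Banach extensions $(\widetilde{m}_{n})$ of the biorthogonals of $(R(g_{n}))$ that are weak$^{*}$-null \emph{on all of $F$} is not possible in general: take $[R(g_{n})]$ to be the unit vector basis of $c_{0}$ sitting inside $F=\ell_{\infty}$; any bounded sequence of extensions of the coordinate functionals restricts to the unit vectors of $\ell_{1}$ on $c_{0}$, and if it were weak$^{*}$-null then (since $\ell_{\infty}$ is a Grothendieck space) it would be weakly null, hence its restriction to $c_{0}$ would be weakly null in $\ell_{1}$ and, by the Schur property, norm null --- a contradiction. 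Without weak$^{*}$-nullity your map $\rho(U)=(\langle\widetilde{m}_{n},U(x_{n})\rangle)_{n}$ fails to land in $c_{0}$ even for rank-one operators $U=\phi\otimes y_{0}$, so the projection onto $C_{0}$ is not constructed. Second, your key claim rests on ``$\mathcal{C}_{\mathcal{I}}(E')$-sets are relatively norm-compact in the situations at hand (e.g.\ $\mathcal{I}\subseteq\mathcal{L}_{K}$)'', but the theorem assumes nothing of the sort: $\mathcal{I}$ is an arbitrary closed operator ideal with $\mathcal{J}\subset\mathcal{I}^{dual}$ (the statement is designed to cover, e.g., $\mathcal{I}=\mathcal{L}_{wK}$, $\mathcal{K}_{p}$, $\mathcal{QN}_{p}$), and for $\mathcal{I}=\mathcal{L}_{wK}$ a weak$^{*}$-null sequence in a relatively weakly compact set need not be norm null. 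A telling symptom is that the strong hypothesis ``$(S'(g'_{n_{k}}))\notin\mathcal{C}_{\mathcal{I}}(E')$ for every subsequence'' is used in your argument only to get $\|S'(g'_{n})\|\geq\delta$, which is far weaker; the actual proof must use it as the source of the final contradiction.

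For comparison, the paper avoids both issues by not attempting to map $\mathcal{J}\circ\mathcal{L}(E;F)$ onto $c_{0}$ at all. It sets $\mu(A)=R\circ P_{A}\circ S$ for the basis projections $P_{A}$, restricts to a separable subspace $E_{0}\subset E$ that is norming for $[S'(g'_{n})]$, composes with an isometry $J:F\to\ell_{\infty}$ on $[R(g_{n})]$, and applies the Ghenciu--Lewis lemma \cite[Lemma 4]{LEWIS} to the finitely additive measure $\upsilon(A)=(J\circ\pi\circ\mu(A)-J\circ\mu(A))|_{E_{0}}$, which vanishes on singletons; this yields an infinite $M$ with $\mu(M)|_{E_{0}}\in\mathcal{J}(E_{0};F)$. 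Then $\mathcal{J}\subset\mathcal{I}^{dual}$ gives $(\mu(M)|_{E_{0}})'\in\mathcal{I}(F';E'_{0})$, and since $(\mu(M)|_{E_{0}})'$ maps the (uniformly bounded) extended biorthogonals to $S'(g'_{n})|_{E_{0}}$ for $n\in M$, one concludes $(S'(g'_{n}))_{n\in M}\in\mathcal{C}_{\mathcal{I}}(E')$, contradicting the hypothesis directly; part (b) is the same argument with a bidual/reflexivity step replacing $\mathcal{J}\subset\mathcal{I}^{dual}$ by $\mathcal{J}^{dual}\subset\mathcal{I}$. To salvage your approach you would have to replace the weak$^{*}$-null selection and the compactness of $\mathcal{C}_{\mathcal{I}}$-sets by an argument of this Rosenthal/vector-measure type; as written, the proposal proves at most a special case and leaves the main step unproved.
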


\begin{proof}

$(a)$ Let $(P_{A})$ be the family of projections associated with $(g_{n})$, $R$ and $S$ as in the hypothesis, and set $\mu(A)=R\circ P_{A}\circ S$, $A\subseteq \mathbb{N}$. Let $E_{0}$ be a separable subspace of $E$ such that $\|x^{\prime}\|=\|x^{\prime}|_{E_{0}}\|$ for all $x^{\prime}\in [S^{\prime}(g^{\prime}_{n}): n\geq 1]$. Let $(y^{\prime}_{n})$ be the sequence of biorthogonal coefficients corresponding to $(R(g_{n}))$, and $(f^{\prime}_{n})$ be the sequence of Hahn-Banach extension to $F^{\prime}$. Now suppose that there exist a projection $\pi:\mathcal{L}(E; F)\rightarrow \mathcal{J}\circ\mathcal{L}(E; F)$. Let $J:F\rightarrow \ell_{\infty}$ be an operator that is an isometry on $[R(g_{n}): n\geq 1]$.
We define $\upsilon:\mathcal{P}(\mathbb{N})\rightarrow \mathcal{L}(E_{0}; \ell_{\infty})$ by $\upsilon(A)=(J\circ \pi\circ \mu(A)- J\circ \mu(A))|_{E_{0}}$,
$A\subseteq \mathbb{N}$. Since $\mu(\{n\})$ is an operator of finite range, then $\upsilon(\{n\})=0$ for every $n\in\mathbb{N}$. Apply \cite[Lemma 4 ]{LEWIS} to obtain an infinite subset $M$ of $\mathbb{N}$ so that $J\circ \pi\circ \mu(M)=J\circ \mu(M)$ on $E_{0}$. Therefore $(J\circ \mu(M))|_{E_{0}}\in \mathcal{J}(E_{0};\ell_{\infty})$. Since $J$ is an isometry on $[R(g_{n}): n\geq 1]$ and $ \mu(M)(E_{0})\subset [R(g_{n}): n\geq 1]$, we have that $\mu(M)|_{E_{0}}\in \mathcal{J}(E_{0};F)$. As $\mathcal{J}\subset \mathcal{I}^{dual}$ it implies that $(\mu(M)|_{E_{0}})^{\prime}\in \mathcal{I}(F^{\prime}; E^{\prime}_{0})$. On the other hand, there exist a constant $c>0$ such that $\|f^{\prime}_{n}\|\leq c$ for all $n\in \mathbb{N}$. Is easy to see that
$c(\mu(M)|_{E_{0}})^{\prime}\big(\frac{f^{\prime}_{n}}{c}\big)=(S^{\prime}(g^{\prime}_{n}))|_{E_{0}}$ for all $n\in M$. Consider the operator
$\psi:[S^{\prime}(g^{\prime}_{n})|_{E_{0}}: n\in M]\rightarrow [S^{\prime}(g^{\prime}_{n}): n\in M]$, defined by $\psi(S^{\prime}(g^{\prime}_{n})|_{E_{0}})=S^{\prime}(g^{\prime}_{n})$. Since $\|x^{\prime}\|=\|x^{\prime}|_{E_{0}}\|$ for all $x^{\prime}\in [S^{\prime}(g^{\prime}_{n}): n\geq 1]$, then $\psi$ is an operator linear and continuo, thus $\psi\circ c(\mu(M)|_{E_{0}})^{\prime})\in \mathcal{I}(F^{\prime};E^{\prime})$ and $(S^{\prime}(g^{\prime}_{n}))_{n\in M}\subset \psi\circ c(\mu(M)|_{E_{0}})^{\prime})(B_{F^{\prime}})$. Thus $(S^{\prime}(g^{\prime}_{n}))_{n\in M}\in \mathcal{C}_{\mathcal{I}}(E^{\prime})$, but it is a contradiction with the hypothese. Therefore $\mathcal{J}\circ\mathcal{L}(E; F)$ is not complemented in $\mathcal{L}(E; F)$.
\bigskip

$(b)$ If we asssume that exist a projetion $\pi:\mathcal{L}(E; F)\rightarrow \mathcal{J}\circ\mathcal{L}(E; F)$, then
analogously as in $(i)$, we have that $\mu(M)|_{E_{0}}\in \mathcal{J}(E_{0};F)$ for any infinite subset $M$ of $\mathbb{N}$.
Is easy to see that $(\mu(M)|_{E_{0}})^{\prime\prime}\circ J_{E_{0}}=J_{F}\circ \mu(M)|_{E_{0}}\in \mathcal{J}(E_{0};F^{\prime\prime})$. Since $E$ is reflexive then $E_{0}$ is also reflexive, therefore $(\mu(M)|_{E_{0}})^{\prime\prime}=(\mu(M)|_{E_{0}})^{\prime\prime}\circ J_{E_{0}}\circ J^{-1}_{E_{0}}\in \mathcal{J}(E^{\prime\prime}_{0};F^{\prime\prime})$. By hypothesis $\mathcal{J}^{dual}\subset \mathcal{I}$, thus $(\mu(M)|_{E_{0}})^{\prime}\in \mathcal{I}(F^{\prime}; E^{\prime}_{0})$. As in the proof of $(i)$ it implies that $(S^{\prime}(g^{\prime}_{n}))_{n\in M}\in \mathcal{C}_{\mathcal{I}}(E^{\prime})$, but it is absurd. Finally we obtain the desired result.
\end{proof}
\bigskip

\begin{remark}
By \cite[Corollary 3.4 ]{DELGADO} and \cite[Proposition 3.8 ]{DELGADO}, $\mathcal{QN}_{p}=\mathcal{K}_{p}^{dual}$ and $\mathcal{K}_{p}=\mathcal{QN}_{p}^{dual}$, respectively. Thus, if we take $\mathcal{J}=\mathcal{QN}_{p}$, $\mathcal{I}=\mathcal{K}_{p}$ and $\mathcal{J}=\mathcal{K}_{p}$, $\mathcal{I}=\mathcal{QN}_{p}$, then
 the previously theorem can be applied.


In \cite[Example 4.4 ]{Geraldo} we can to see more examples of known ideals of operators that satisfy the theorem previously mentioned.

\end{remark}

\begin{theorem} \label{thm:(Teorema 1111)}
Let $E$ and $F$ be Banach spaces, and let $G$ be a Banach space with an unconditional basis $(g_{n})$ and coordinate functionals $(g^{\prime}_{n})$.

\begin{enumerate}

\item [(a)] If there exist operators $R\in\mathcal{L}(G; F)$ and $S\in\mathcal{L}(E; G)$ such that $(R(g_{n}))$ is a semi-normalized basic sequence
in $F$ and $(S^{\prime}(g^{\prime}_{n_{k}}))\notin \mathcal{C}_{\mathcal{I}}(E^{\prime})$ for all subsequence $(n_{k})\subset \mathbb{N}$, where $\mathcal{I}$ and $\mathcal{J}$ be closed operator ideals such that $\mathcal{J}\subset \mathcal{I}^{dual}$.
Then $\mathcal{J}\circ\mathcal{P}(^{n}E; F)$ is not complemented in
$\mathcal{P}(^{n}E; F)$ for every $n\in \mathbb{N}$.

\item [(b)] If $E$ is a reflexive Banach space and there exist operators $R\in\mathcal{L}(G; F)$ and $S\in\mathcal{L}(E; G)$ such that $(R(g_{n}))$ is a semi-normalized basic sequence in $F$ and  $(S^{\prime}(g^{\prime}_{n_{k}}))\notin \mathcal{C}_{\mathcal{I}}(E^{\prime})$ for all subsequence $(n_{k})\subset \mathbb{N}$, where $\mathcal{I}$ and $\mathcal{J}$ be closed operator ideals such that $\mathcal{J}^{dual}\subset \mathcal{I}$. Then $\mathcal{J}\circ\mathcal{P}(^{n}E; F)$ is not complemented in $\mathcal{P}(^{n}E; F)$ for every $n\in \mathbb{N}$.
\end{enumerate}
\end{theorem}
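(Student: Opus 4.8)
The plan is to deduce Theorem~\ref{thm:(Teorema 1111)} from Theorem~\ref{thm:(Teorema 10)} by transporting the complementation problem between $\mathcal{L}(E;F)$ and $\mathcal{P}(^{n}E;F)$ along two explicit bounded linear maps. The case $n=1$ is exactly Theorem~\ref{thm:(Teorema 10)}, so I would assume $n\geq 2$. Recall that for linear operators $\mathcal{J}\circ\mathcal{L}(E;F)=\mathcal{J}(E;F)$, by the ideal property of $\mathcal{J}$. Since $E\neq\{0\}$, fix $\varphi\in E'$ with $\|\varphi\|=1$ and a point $x_{0}\in E$ with $\varphi(x_{0})=1$, and define
\[
\Phi\colon\mathcal{L}(E;F)\to\mathcal{P}(^{n}E;F),\qquad \Phi(T)(x)=\varphi(x)^{n-1}\,T(x),
\]
\[
\Psi\colon\mathcal{P}(^{n}E;F)\to\mathcal{L}(E;F),\qquad \Psi(P)(x)=n\,\check{P}(x,x_{0},\dots,x_{0})-(n-1)\,\varphi(x)\,P(x_{0}).
\]
Both maps are linear, and both are bounded: $\|\Phi(T)\|\leq\|T\|$, while $\|\check{P}\|\leq\frac{n^{n}}{n!}\|P\|$ yields an estimate $\|\Psi(P)\|\leq C_{n}\,\|x_{0}\|^{n}\,\|P\|$.

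Then I would establish three facts. (1) $\Psi\circ\Phi=\mathrm{id}_{\mathcal{L}(E;F)}$: the symmetric $n$-linear map associated with $\Phi(T)$ is $(x_{1},\dots,x_{n})\mapsto\frac{1}{n}\sum_{j=1}^{n}\big(\prod_{i\neq j}\varphi(x_{i})\big)T(x_{j})$, so $\check{\Phi(T)}(x,x_{0},\dots,x_{0})=\frac{1}{n}\big(T(x)+(n-1)\varphi(x)T(x_{0})\big)$; since $\Phi(T)(x_{0})=T(x_{0})$, substituting into the definition of $\Psi$ gives $\Psi(\Phi(T))=T$. (2) $\Phi\big(\mathcal{J}(E;F)\big)\subseteq\mathcal{J}\circ\mathcal{P}(^{n}E;F)$: the map $R_{0}(x)=\varphi(x)^{n-1}x$ belongs to $\mathcal{P}(^{n}E;E)$ and $\Phi(T)=T\circ R_{0}$, so if $T\in\mathcal{J}(E;F)$ then $\Phi(T)$ has the factorization form of Definition~\ref{corolario 1}(b), hence $\Phi(T)\in\mathcal{J}\circ\mathcal{P}(^{n}E;F)$. (3) $\Psi\big(\mathcal{J}\circ\mathcal{P}(^{n}E;F)\big)\subseteq\mathcal{J}(E;F)$: if $P=u\circ Q$ with $u\in\mathcal{J}(G;F)$ and $Q\in\mathcal{P}(^{n}E;G)$, then $\check{P}=u\circ\check{Q}$ by uniqueness of the symmetric multilinear map, so $x\mapsto n\,\check{P}(x,x_{0},\dots,x_{0})=u\big(n\,\check{Q}(x,x_{0},\dots,x_{0})\big)$ is $u$ composed with a bounded operator $E\to G$ and thus lies in $\mathcal{J}(E;F)$ by the ideal property; the remaining summand $x\mapsto-(n-1)\varphi(x)P(x_{0})$ has rank $\leq 1$, hence lies in $\mathcal{F}(E;F)\subseteq\mathcal{J}(E;F)$, and therefore $\Psi(P)\in\mathcal{J}(E;F)$.

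With these facts the theorem follows immediately. Assume toward a contradiction that $\mathcal{J}\circ\mathcal{P}(^{n}E;F)$ is complemented in $\mathcal{P}(^{n}E;F)$, and let $\rho\colon\mathcal{P}(^{n}E;F)\to\mathcal{J}\circ\mathcal{P}(^{n}E;F)$ be a bounded projection. Put $\pi=\Psi\circ\rho\circ\Phi$. By (3), $\pi$ maps $\mathcal{L}(E;F)$ into $\mathcal{J}(E;F)$; and if $T\in\mathcal{J}(E;F)$, then $\Phi(T)\in\mathcal{J}\circ\mathcal{P}(^{n}E;F)$ by (2), so $\rho(\Phi(T))=\Phi(T)$ and, by (1), $\pi(T)=\Psi(\Phi(T))=T$. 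Hence $\pi$ is a bounded projection of $\mathcal{L}(E;F)$ onto $\mathcal{J}(E;F)=\mathcal{J}\circ\mathcal{L}(E;F)$, contradicting Theorem~\ref{thm:(Teorema 10)}(a) in case (a) and Theorem~\ref{thm:(Teorema 10)}(b) in case (b) (the hypotheses there being identical to those of the present statement, including the reflexivity of $E$ in (b)). Therefore $\mathcal{J}\circ\mathcal{P}(^{n}E;F)$ is not complemented in $\mathcal{P}(^{n}E;F)$ for every $n\in\mathbb{N}$.

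The routine parts are the norm bounds and the polarization identity used in step (1). The one point deserving care is step (3): one must choose $\Psi$ so that composing the factorization $P=u\circ Q$ with evaluation at $(x,x_{0},\dots,x_{0})$ yields a genuine factorization through $u\in\mathcal{J}(G;F)$ and so that the low-rank correction term is absorbed into $\mathcal{J}$; this is why $\Psi$ is taken in the above form rather than as the naive $P\mapsto\check{P}(\cdot,x_{0},\dots,x_{0})$, which would only give $\Psi\circ\Phi$ invertible rather than the identity. I do not anticipate a genuine obstacle here, since all the analytic content of the statement is already carried by Theorem~\ref{thm:(Teorema 10)}.
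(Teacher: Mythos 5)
Your proof is correct, but it follows a different route from the paper's. The paper also reduces the degree-$n$ statement to the case $n=1$ (Theorem~\ref{thm:(Teorema 10)}), but it implements the transfer of the hypothetical projection through tensor-product linearization: Ryan's isomorphism $\mathcal{P}(^{n}E;F)\cong\mathcal{L}(\hat{\otimes}_{n,s,\pi}E;F)$, the identification $\mathcal{J}\circ\mathcal{P}(^{n}E;F)\cong\mathcal{J}(\hat{\otimes}_{n,s,\pi}E;F)$ from \cite[Proposition 3.2]{PELLEGRINO}, and Blasco's theorem that $E$ is complemented in $\hat{\otimes}_{n,s,\pi}E$, which together turn a projection of $\mathcal{P}(^{n}E;F)$ onto $\mathcal{J}\circ\mathcal{P}(^{n}E;F)$ into a projection $T\mapsto\rho(T\circ B)\circ A$ of $\mathcal{L}(E;F)$ onto $\mathcal{J}(E;F)$. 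You instead build the transfer by hand, embedding $\mathcal{L}(E;F)$ into $\mathcal{P}(^{n}E;F)$ via $T\mapsto\varphi^{n-1}T$ and left-inverting with $P\mapsto n\check{P}(\cdot,x_{0},\dots,x_{0})-(n-1)\varphi(\cdot)P(x_{0})$; your verifications (the polarization computation for $\Psi\circ\Phi=\mathrm{id}$, the factorizations $\Phi(T)=T\circ R_{0}$ and $\check{P}=u\circ\check{Q}$, and absorption of the rank-one correction into $\mathcal{J}$ via finite-type operators) are all sound, and the final contradiction with Theorem~\ref{thm:(Teorema 10)}(a) or (b) works because your construction is insensitive to which ideal inclusion is assumed. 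What your approach buys is self-containedness: it avoids citing Ryan, Pellegrino--Botelho--Rueda and Blasco, and it is essentially the same device (the Aron--Schottenloher/\c{C}aliskan--Rueda operators $A$ and $B$) that the paper itself deploys later in the proof of Theorem~\ref{thm:(Teorema 11)}, except that you collapse the degree reduction from $n$ to $1$ in a single step rather than inducting. What the paper's route buys is that the identification $\mathcal{J}\circ\mathcal{P}(^{n}E;F)\cong\mathcal{J}(\hat{\otimes}_{n,s,\pi}E;F)$ packages the ideal bookkeeping once and for all, so no membership verifications like your steps (2) and (3) are needed. One small point of care in your write-up: choose $x_{0}$ with $\varphi(x_{0})=1$ and $\|x_{0}\|\leq 1+\varepsilon$ (norm attainment need not hold), which only affects the constants in your bound for $\|\Psi\|$ and nothing else.
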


\begin{proof}

$(a)$ The case $n=1$ follows from Theorem \ref{thm:(Teorema 10)} $(a)$. Suppose that exists a projection $\pi:\mathcal{P}(^{n}E; F)\rightarrow \mathcal{J}\circ\mathcal{P}(^{n}E; F)$ for some $n>1$. By a result of Ryan  \cite{RYAN} there exists an isomorphism
$$P\in\mathcal{P}(^{n}E; F)\rightarrow \check{P}\in\mathcal{L}(\hat{\otimes}_{n,s,\pi}E; F).$$ Using \cite[Proposition 3.2 ]{PELLEGRINO} we have that
$\mathcal{J}\circ\mathcal{P}(^{n}E; F)$ is isomorphic to $\mathcal{J}(\hat{\otimes}_{n,s,\pi}E; F)$, then there exists a projection
$\rho: \mathcal{L}(\hat{\otimes}_{n,s,\pi}E; F)\rightarrow \mathcal{J}(\hat{\otimes}_{n,s,\pi}E; F)$. By a result of Blasco \cite[Theorem 3]{BLASCO} $E$ is isomorphic to a complemented subspace of $\hat{\otimes}_{n,s,\pi}E$. Hence there exist operators $A\in\mathcal{L}(E;\hat{\otimes}_{n,s,\pi}E)$ and $B\in\mathcal{L}(\hat{\otimes}_{n,s,\pi}E; E)$ such that $B\circ A=I$. Consider the operator
$$\varphi: T\in\mathcal{L}(E;F)\rightarrow \rho(T\circ B)\circ A\in \mathcal{J}(E; F).$$

If $T\in\mathcal{J}(E; F)$, then $T\circ B\in\mathcal{J}(\hat{\otimes}_{n,s,\pi}E; F)$ and therefore $\rho(T\circ B)\circ A=T\circ B\circ A=T$.
Thus $\varphi: T\in\mathcal{L}(E;F)\rightarrow \rho(T\circ B)\circ A\in \mathcal{J}(E; F)$ is a projection, contradicting the case $n=1$.

$(b)$  The proof of $(b)$ is almost identical to the proof of $(a)$.
\end{proof}

When $\mathcal{J}=\mathcal{I}=\mathcal{L}_{K}$  Theorem \ref{thm:(Teorema 1111)} correspond to
part $(a)$ of \cite[Theorem 3.1 ]{SERGIO}, and part (b) of \cite[Theorem 3.1 ]{SERGIO} is obtained when $\mathcal{J}=\mathcal{I}=\mathcal{L}_{wK}$.

\begin{proposition} \label{thm:(Teorema 111)}
Let $T\in (\mathcal{I}\circ \mathcal{L})^{fac}(^{n}E;F)$ and $\varphi\in E^{\prime}$. Then $\varphi T\in(\mathcal{I}\circ \mathcal{L})^{fac}(^{n+1}E;F)$,
where $$\varphi T(x_{1},\ldots,x_{n+1})=\varphi(x_{n+1})T(x_{1},\ldots,x_{n}).$$
\end{proposition}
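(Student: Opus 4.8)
The plan is to verify the defining property of $(\mathcal{I}\circ\mathcal{L})^{fac}$ directly. That $\varphi T$ is a bounded $(n+1)$-linear map is immediate, since $\varphi\in E'$ and $T$ is bounded, so the only issue is factorability. By the construction of $\mathcal{M}^{fac}$ recalled above, it suffices to produce, for \emph{every} partition $\mathcal{P}=\{P_1,\ldots,P_k\}$ of $\{1,\ldots,n+1\}$, Banach spaces $G_1,\ldots,G_k$, bounded multilinear block maps $B_j\in\mathcal{L}(^{|P_j|}E;G_j)$ and a core $C\in(\mathcal{I}\circ\mathcal{L})(G_1,\ldots,G_k;F)$ such that
\[
\varphi T(x_1,\ldots,x_{n+1})=C\bigl(B_1((x_i)_{i\in P_1}),\ldots,B_k((x_i)_{i\in P_k})\bigr).
\]
So I would fix such a $\mathcal{P}$, relabel the blocks so that $n+1\in P_k$, and exploit the fact that $\mathcal{P}$ induces a partition $\mathcal{P}'$ of $\{1,\ldots,n\}$ (delete the index $n+1$ from its block) to which the hypothesis $T\in(\mathcal{I}\circ\mathcal{L})^{fac}(^{n}E;F)$ applies; the factorization of $\varphi T$ respecting $\mathcal{P}$ will then be assembled from the factorization of $T$ respecting $\mathcal{P}'$ by inserting $\varphi$ into the appropriate slot.

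I would then split into two cases. \textbf{Case 1: $P_k=\{n+1\}$.} Here $\mathcal{P}'=\{P_1,\ldots,P_{k-1}\}$ (note $k\geq2$, since $n\geq1$), and the hypothesis supplies a factorization $T=C\circ(B_1,\ldots,B_{k-1})$ with $C\in(\mathcal{I}\circ\mathcal{L})(G_1,\ldots,G_{k-1};F)$, say $C=u\circ D$ with $D$ multilinear and $u\in\mathcal{I}(H;F)$. I would take $G_k=\mathbb{K}$, use $\varphi\in\mathcal{L}(E;\mathbb{K})$ as the $k$-th block map, and enlarge the core by one scalar argument: set $\widetilde D(y_1,\ldots,y_{k-1},\lambda)=\lambda D(y_1,\ldots,y_{k-1})$ and $\widetilde C=u\circ\widetilde D$, so that $\widetilde C\in(\mathcal{I}\circ\mathcal{L})(G_1,\ldots,G_{k-1},\mathbb{K};F)$ and $\varphi T=\widetilde C\circ(B_1,\ldots,B_{k-1},\varphi)$, which respects $\mathcal{P}$. \textbf{Case 2: $P_k=Q\cup\{n+1\}$ with $\emptyset\neq Q\subseteq\{1,\ldots,n\}$.} Here $\mathcal{P}'=\{P_1,\ldots,P_{k-1},Q\}$, the hypothesis supplies $T=C\circ(B_1,\ldots,B_{k-1},B_Q)$ with $C\in(\mathcal{I}\circ\mathcal{L})(G_1,\ldots,G_k;F)$ and $B_Q\in\mathcal{L}(^{|Q|}E;G_k)$, and I would simply replace $B_Q$ by the $(|Q|+1)$-linear map $\widetilde B_k((x_i)_{i\in Q},x_{n+1})=\varphi(x_{n+1})\,B_Q((x_i)_{i\in Q})$, keeping the \emph{same} core $C$; linearity of $C$ in its last variable gives $\varphi T=C\circ(B_1,\ldots,B_{k-1},\widetilde B_k)$, which respects $\mathcal{P}$. (The trivial one-block partition is the sub-case $Q=\{1,\ldots,n\}$, $k=1$.) Since $\mathcal{P}$ was arbitrary, this yields $\varphi T\in(\mathcal{I}\circ\mathcal{L})^{fac}(^{n+1}E;F)$.

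I do not anticipate a real difficulty: the argument is essentially bookkeeping. The points that need care are (i) choosing the correct induced partition $\mathcal{P}'$ of $\{1,\ldots,n\}$ in each of the two cases, and (ii) in Case 1, checking that adjoining a single scalar slot to the core keeps it inside $\mathcal{I}\circ\mathcal{L}$ — which is immediate from the explicit factorization $C=u\circ D$, since then $\widetilde C=u\circ\widetilde D$ with $u\in\mathcal{I}$. The remaining verifications (boundedness and multilinearity of $\widetilde C$, $\widetilde D$, $\widetilde B_k$, and the elementary identities expressing $\varphi T$ in the claimed form) are routine.
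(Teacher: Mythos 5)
Your case division (according to whether $n+1$ forms a singleton block of the partition or sits inside a larger block) is exactly the paper's, but you have attached the ideal condition to the wrong factor, so as written the argument verifies membership in a different class. In the definition of $\mathcal{M}^{fac}$ used in the paper (and in the Botelho--Torres construction it cites), a multilinear map is strongly $\mathcal{M}$-factorable when, for each partition, it factors as $C\circ(B_{1},\ldots,B_{k})$ with the block maps $B_{j}$ belonging to $\mathcal{M}=\mathcal{I}\circ\mathcal{L}$ and the outer map $C$ merely bounded; you instead require the blocks to be arbitrary bounded maps and the core $C$ to lie in $\mathcal{I}\circ\mathcal{L}$. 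These conditions are not equivalent. For instance, take $\mathcal{I}=\mathcal{K}$ and $F=\mathbb{K}$: for any bounded $n$-linear form and any partition one can choose $G_{j}$ to be the projective tensor product over the $j$-th block, $B_{j}$ the canonical map, and the induced form as core, and this core automatically lies in $\mathcal{K}\circ\mathcal{L}$ because every scalar-valued multilinear map factors through $\mathrm{id}_{\mathbb{K}}\in\mathcal{K}$; so under your reading the class would be all of $\mathcal{L}(^{n}E;\mathbb{K})$, whereas the paper's $(\mathcal{K}\circ\mathcal{L})^{fac}=\mathcal{L}_{wb}$ is in general a proper subclass (the inner product on $\ell_{2}\times\ell_{2}$ is not weakly continuous on bounded sets). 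Hence the proposal does not prove the proposition as stated.

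The repair is mechanical, and after it your proof coincides with the paper's. Keep the same two cases, but put the verifications on the blocks: in Case 1 the new singleton block is $\varphi$ itself, and $\varphi\in\mathcal{I}\circ\mathcal{L}(E;\mathbb{K})$ because it has finite rank (equivalently $\varphi=\mathrm{id}_{\mathbb{K}}\circ\varphi$ with $\mathrm{id}_{\mathbb{K}}\in\mathcal{I}$), while the enlarged core $\widehat{C}(g_{1},\ldots,g_{k-1},\lambda)=\lambda\,C(g_{1},\ldots,g_{k-1})$ only needs to be bounded, so no factorization of $C$ is required there; in Case 2 the genuine point is that the modified block $\widetilde{B}((x_{i})_{i\in Q},x_{n+1})=\varphi(x_{n+1})\,B_{Q}((x_{i})_{i\in Q})$ remains in $\mathcal{I}\circ\mathcal{L}$, which follows by writing $B_{Q}=u\circ D$ with $u\in\mathcal{I}$ and absorbing $\varphi$ into the bounded multilinear factor $D$, the (merely bounded) core being left unchanged. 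In short, your $u\circ D$ device is the right tool, but you apply it in Case 1 to the core, where it is not needed, instead of in Case 2 to the block, where the paper's definition requires it.
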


\begin{proof}
Let $\pi=\{j^{1}_{1},\ldots,j^{1}_{k_{1}}\}\cup\ldots\cup\{j^{m}_{1},\ldots,j^{m}_{k_{m}}\}$ be a partition of $\{1,\ldots,n+1\}$.
\bigskip

Case $1$: Suppose that $\{n+1\}\in \pi$. Let $\widehat{\pi}=\pi-\{n+1\}$ be a partition of $\{1,\ldots,n\}$. We can write
$$T\stackrel{\widehat{\pi}}{=}C\circ (B_{1},\ldots,B_{m-1}),$$

where $B_{i}\in \mathcal{I}\circ \mathcal{L}(\underbrace{E\times \ldots \times E}_{k_{i} \ \   times}; G_{i}),$ $i=1,\ldots, m-1,$
and $C\in \mathcal{L}(G_{1}\times\ldots \times G_{m-1}; F)$. We consider $G_{m}=\mathbb{K}$ and $B_{m}=\varphi$.
Define $\widehat{C}\in \mathcal{L}(G_{1}\times\ldots \times G_{m}; F)$ by

$$\widehat{C}(g_{1},\ldots,g_{m})=C(g_{1},\ldots,g_{m-1})g_{m},$$

for all $g_{i}\in G_{i}$, $i=1,\ldots,m$.

We can see easily that

$$\varphi T(x_{1},\ldots,x_{n+1})=\widehat{C}(B_{1}(x_{j_{1}}^{1},\ldots,x_{j_{k_{1}}}^{1}),\ldots,B_{m-1}(x_{j_{1}}^{m-1},\ldots,x_{j_{k_{m-1}}}^{m-1}),B_{m}(x_{n+1})).$$

Therefore $\varphi T\in(\mathcal{I}\circ \mathcal{L})^{fac}(^{n+1}E;F)$.

\bigskip

Case $2$: Suppose that $n+1\in \{j^{l}_{1},\ldots,j^{l}_{k_{l}}\}$ for some $1\leq l\leq m$, where $k_{l}>1$.
Let $\widehat{\pi}=\pi-\{n+1\}$ be a partition of $\{1,\ldots,n\}$. Then
$$T\stackrel{\widehat{\pi}}{=}C\circ (B_{1},\ldots,B_{m}),$$
where $B_{l}\in \mathcal{I}\circ \mathcal{L}(\underbrace{E\times \ldots \times E}_{k_{l}-1 \ \   times}; G_{l}),$  $B_{i}\in \mathcal{I}\circ \mathcal{L}(\underbrace{E\times \ldots \times E}_{k_{i} \ \   times}; G_{i}),$  $i=1,\ldots l-1,l+1,\ldots,m$,
and $C\in \mathcal{L}(G_{1}\times\ldots \times G_{m}; F)$. We define
$\widehat{B_{l}}\in \mathcal{I}\circ \mathcal{L}(\underbrace{E\times \ldots \times E}_{k_{l} \ \   times}; G_{l}),$

by

$$\widehat{B_{l}}(x_{1},\ldots,x_{k_{l}})=\varphi(x_{k_{l}})B_{l}(x_{1},\ldots,x_{k_{l}-1}).$$

Finally, we have that

$$\varphi T(x_{1},\ldots,x_{n+1})=\varphi(x_{n+1})T(x_{1},\ldots,x_{n})=C(B_{1}(x_{j_{1}}^{1},\ldots,x_{j_{k_{1}}}^{1}),\ldots,\widehat{B_{l}}(x_{j_{1}}^{l},\ldots,x_{n+1}),\ldots, B_{m}(x_{j_{1}}^{m},\ldots,x_{j_{k_{m}}}^{m})).$$

This complete the proof.

\end{proof}

\begin{proposition} \label{thm:(Teorema 112)}
Let $T\in (\mathcal{I}\circ \mathcal{L})^{fac}(^{n}E;F)$ and $e\in E$. Then $\overline{T_{j}} \in(\mathcal{I}\circ \mathcal{L})^{fac}(^{n-j}E;F)$,
where $\overline{T_{j}}(x_{1},\ldots,x_{n-j})=T(x_{1},\ldots,x_{n-j},\underbrace{e,\ldots,e}_{j \ times})$, for every $j=1,2,...,n-1$.
\end{proposition}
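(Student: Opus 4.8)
The plan is to proceed by the same partition-chasing method used in the proof of Proposition~\ref{thm:(Teorema 111)}, but now ``evaluating'' $j$ of the slots at the fixed vector $e$ rather than multiplying by a functional. It suffices to treat the case $j=1$, since the general case follows by an obvious induction: if $\overline{T_{1}}\in(\mathcal{I}\circ\mathcal{L})^{fac}(^{n-1}E;F)$ then applying the $j=1$ case again gives $\overline{T_{2}}\in(\mathcal{I}\circ\mathcal{L})^{fac}(^{n-2}E;F)$, and so on down to $j=n-1$. So fix $e\in E$ and write $S=\overline{T_{1}}$, i.e.\ $S(x_{1},\ldots,x_{n-1})=T(x_{1},\ldots,x_{n-1},e)$, and let $\pi=\{j^{1}_{1},\ldots,j^{1}_{k_{1}}\}\cup\cdots\cup\{j^{m}_{1},\ldots,j^{m}_{k_{m}}\}$ be an arbitrary partition of $\{1,\ldots,n-1\}$. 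I must produce a factorization of $S$ subordinate to $\pi$.

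The idea is to enlarge $\pi$ to a partition $\widetilde{\pi}$ of $\{1,\ldots,n\}$ by appending the index $n$ to one of the blocks, use the factorization of $T$ that $(\mathcal{I}\circ\mathcal{L})^{fac}$ furnishes with respect to $\widetilde{\pi}$, and then freeze the last coordinate at $e$. Concretely, set $\widetilde{\pi}=\{j^{1}_{1},\ldots,j^{1}_{k_{1}}\}\cup\cdots\cup\{j^{m}_{1},\ldots,j^{m}_{k_{m}},n\}$, where $n$ is adjoined to the last block (which now has $k_{m}+1$ elements). By definition of $(\mathcal{I}\circ\mathcal{L})^{fac}$ there are $B_{i}\in\mathcal{I}\circ\mathcal{L}(\,{}^{k_{i}}E;G_{i})$ for $i=1,\ldots,m-1$, an operator $B_{m}\in\mathcal{I}\circ\mathcal{L}(\,{}^{k_{m}+1}E;G_{m})$, and $C\in\mathcal{L}(G_{1}\times\cdots\times G_{m};F)$ with
$$T(x_{1},\ldots,x_{n})=C\bigl(B_{1}(\ldots),\ldots,B_{m-1}(\ldots),B_{m}(x^{m}_{j_{1}},\ldots,x^{m}_{j_{k_{m}}},x_{n})\bigr).$$
Now define $\widehat{B_{m}}\in\mathcal{L}(\,{}^{k_{m}}E;G_{m})$ by $\widehat{B_{m}}(y_{1},\ldots,y_{k_{m}})=B_{m}(y_{1},\ldots,y_{k_{m}},e)$. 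Since $\widehat{B_{m}}=B_{m}\circ(\mathrm{id},\ldots,\mathrm{id},c_{e})$ where $c_{e}\colon\mathbb{K}\to E$, $\lambda\mapsto\lambda e$ — equivalently $\widehat{B_{m}}$ is obtained from $B_{m}$ by precomposing the last argument with the (continuous, rank-one) operator sending everything to a multiple of $e$ — the multi-ideal property (condition (ii) of a multi-ideal, applied to $\mathcal{I}\circ\mathcal{L}$, which is a multi-ideal since $\mathcal{I}$ is an operator ideal) guarantees $\widehat{B_{m}}\in\mathcal{I}\circ\mathcal{L}(\,{}^{k_{m}}E;G_{m})$. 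Then
$$S(x_{1},\ldots,x_{n-1})=C\bigl(B_{1}(x^{1}_{j_{1}},\ldots,x^{1}_{j_{k_{1}}}),\ldots,B_{m-1}(\ldots),\widehat{B_{m}}(x^{m}_{j_{1}},\ldots,x^{m}_{j_{k_{m}}})\bigr),$$
which exhibits $S$ as a factorization subordinate to $\pi$ through multilinear maps in $\mathcal{I}\circ\mathcal{L}$ composed with $C\in\mathcal{L}$. As $\pi$ was arbitrary, $S=\overline{T_{1}}\in(\mathcal{I}\circ\mathcal{L})^{fac}(^{n-1}E;F)$.

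One genuine point to check — and what I expect to be the only subtlety — is the degenerate case $n=2$: there $\pi$ is the one-element partition $\{1\}$ of $\{1\}$, $\widetilde{\pi}=\{1,2\}$ of $\{1,2\}$, $m=1$, $k_{1}=1$, so $B_{1}=T\in\mathcal{I}\circ\mathcal{L}(\,{}^{2}E;G)$, $C\in\mathcal{L}(G;F)$, and the argument still goes through verbatim with $\widehat{B_{1}}(y)=B_{1}(y,e)$. (If one wished, one could also invoke Proposition~\ref{thm:(Teorema 111)}-style bookkeeping, but the direct argument is cleaner.) I should also note explicitly that $\mathcal{I}\circ\mathcal{L}$ is indeed a multi-ideal, so that $(\mathcal{I}\circ\mathcal{L})^{fac}$ is defined and its factorizing pieces $B_{i}$ themselves enjoy the ideal property used above; this is standard and is implicit in the construction of $\mathcal{M}^{fac}$ recalled from \cite{GERALDO}. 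Everything else is routine substitution.
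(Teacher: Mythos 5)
Your proof is correct, but it routes the fixed vector $e$ differently from the paper. The paper enlarges the given partition $\pi$ of $\{1,\ldots,n-1\}$ by adjoining $\{n\}$ as a \emph{singleton} block: then $T\stackrel{\widehat{\pi}}{=}C\circ(B_{1},\ldots,B_{m+1})$ with $B_{m+1}\in\mathcal{I}\circ\mathcal{L}(E;G_{m+1})$, and the vector $e$ is absorbed into the \emph{linear} factor by setting $\widehat{C}(g_{1},\ldots,g_{m})=C(g_{1},\ldots,g_{m},B_{m+1}(e))$, so the original $B_{1},\ldots,B_{m}$ are reused untouched and no ideal-membership verification is needed at all. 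You instead append $n$ to an existing block and then freeze the last argument of $B_{m}\in\mathcal{I}\circ\mathcal{L}(^{k_{m}+1}E;G_{m})$ at $e$, which forces you to check that the resulting $k_{m}$-linear map $\widehat{B_{m}}$ still lies in $\mathcal{I}\circ\mathcal{L}$. That claim is true, but your justification via the multi-ideal property (ii) is slightly loose: composing with $(\mathrm{id},\ldots,\mathrm{id},c_{e})$, $c_{e}:\lambda\mapsto\lambda e$, produces a $(k_{m}+1)$-linear map with a scalar slot, and you are implicitly identifying $\mathcal{L}(E,\ldots,E,\mathbb{K};G_{m})$ with $\mathcal{L}(E,\ldots,E;G_{m})$; cleaner is to write $B_{m}=u\circ B$ with $u\in\mathcal{I}(H;G_{m})$, $B\in\mathcal{L}(^{k_{m}+1}E;H)$, and observe $\widehat{B_{m}}=u\circ B(\cdot,\ldots,\cdot,e)\in\mathcal{I}\circ\mathcal{L}$. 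So both arguments work for an arbitrary partition and yield the same conclusion; the paper's choice of the singleton block is the more economical one (it needs nothing about $\mathcal{I}\circ\mathcal{L}$ beyond its definition), while yours is equally valid once the freezing step is stated precisely, and it has the minor merit of showing that $\mathcal{I}\circ\mathcal{L}$ is stable under fixing variables, a fact of independent use. Your reduction to $j=1$ by induction and your treatment of the degenerate case $n=2$ match the paper's scheme.
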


\begin{proof}
To prove the proposition by induction on $j$ it suffices to prove for the case $j=1$.
Let $\pi=\{j^{1}_{1},\ldots,j^{1}_{k_{1}}\}\cup\ldots\cup\{j^{m}_{1},\ldots,j^{m}_{k_{m}}\}$ be a partition of $\{1,\ldots,n-1\}$.
Consider the partition $\widehat{\pi}=\pi\cup\{n\}$ of $\{1,\ldots,n\}$. By hipothesis $T\in (\mathcal{I}\circ \mathcal{L})^{fac}(^{n}E;F)$, then we can write

$$T\stackrel{\widehat{\pi}}{=}C\circ (B_{1},\ldots,B_{m+1}),$$

where $B_{i}\in \mathcal{I}\circ \mathcal{L}(\underbrace{E\times \ldots \times E}_{k_{i} \ \   times}; G_{i}),$ $i=1,\ldots, m+1,$
and $C\in \mathcal{L}(G_{1}\times\ldots \times G_{m+1}; F)$.
Let $\widehat{C}\in \mathcal{L}(G_{1}\times\ldots \times G_{m}; F)$ define by

$$\widehat{C}(g_{1},\ldots,g_{m})=C(g_{1},\ldots,g_{m},B_{m+1}(e)),$$

then
\begin{eqnarray}
\overline{T_{1}}(x_{1},\ldots,x_{n-1})=T(x_{1},\ldots,x_{n-1},e)=C(B_{1}(x_{j_{1}}^{1},\ldots,x_{j_{k_{1}}}^{1}),\ldots,B_{m}(x_{j_{1}}^{m},\ldots, x_{j_{k_{m}}}^{m}),B_{m+1}(e))\nonumber\\
=\widehat{C}(B_{1}(x_{j_{1}}^{1},\ldots,x_{j_{k_{1}}}^{1}),\ldots,B_{m}(x_{j_{1}}^{m},\ldots, x_{j_{k_{m}}}^{m}))\ \ \ \ \ \ \ \ \ \ \ \ \ \ \ \ \ \nonumber
\end{eqnarray}

Thus $\overline{T_{1}} \in(\mathcal{I}\circ \mathcal{L})^{fac}(^{n-1}E;F)$. This complete the proof.
\end{proof}

\begin{theorem} \label{thm:(Teorema 11)}
Let $E$ and $F$ be Banach spaces, and let $G$ be a Banach space with an unconditional basis $(g_{n})$ and coordinate functionals $(g^{\prime}_{n})$.
 If there exist operators $R\in\mathcal{L}(G; F)$ and $S\in\mathcal{L}(E; G)$ such that $(R(g_{n}))$ is a semi-normalized basic sequence
in $F$ and $(S^{\prime}(g^{\prime}_{n_{k}}))\notin \mathcal{C}_{\mathcal{I}}(E^{\prime})$ for all subsequence $(n_{k})\subset \mathbb{N}$, where $\mathcal{I}$ and $\mathcal{J}$ be closed operator ideals such that $\mathcal{J}\subset \mathcal{I}^{dual}$. Then $\widehat{(\mathcal{J}\circ\mathcal{L})^{fac}}(^{n}E;F)$ is not complemented in
$\mathcal{P}(^{n}E; F)$ for every $n\in \mathbb{N}$.
\end{theorem}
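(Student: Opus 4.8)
The plan is to reduce the statement to Theorem~\ref{thm:(Teorema 1111)}(a) by exploiting the relationship $\widehat{(\mathcal{L}_K\circ\mathcal{L})^{fac}}=\mathcal{P}_w$ and, more generally, the polynomial ideal structure of $\widehat{(\mathcal{J}\circ\mathcal{L})^{fac}}$. First I would fix $n$; the case $n=1$ is immediate since $(\mathcal{J}\circ\mathcal{L})^{fac}(^{1}E;F)=\mathcal{J}\circ\mathcal{L}(E;F)=\mathcal{J}(E;F)$, so Theorem~\ref{thm:(Teorema 10)}(a) applies directly. For $n>1$, suppose for contradiction that there is a projection $\pi:\mathcal{P}(^{n}E;F)\to\widehat{(\mathcal{J}\circ\mathcal{L})^{fac}}(^{n}E;F)$. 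The goal is to manufacture from $\pi$ a projection of $\mathcal{P}(^{m}E;F)$ onto $\widehat{(\mathcal{J}\circ\mathcal{L})^{fac}}(^{m}E;F)$ for a smaller $m$, ultimately descending to $m=1$ and contradicting the $n=1$ case, or alternatively to directly contradict Theorem~\ref{thm:(Teorema 1111)}(a) by relating $\widehat{(\mathcal{J}\circ\mathcal{L})^{fac}}$ to $\mathcal{J}\circ\mathcal{P}$.

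The key device is the pair of Propositions~\ref{thm:(Teorema 111)} and~\ref{thm:(Teorema 112)}: the first says that multiplying a symmetric multilinear map in the ideal by a functional $\varphi\in E'$ stays in the ideal, and the second says that freezing coordinates at a fixed vector $e\in E$ stays in the ideal. Translated to polynomials via the Ryan isomorphism $P\mapsto\check P$, these give me two natural operations: given $\varphi\in E'$ and $Q\in\widehat{(\mathcal{J}\circ\mathcal{L})^{fac}}(^{m}E;F)$, the polynomial $x\mapsto \varphi(x)^{n-m}Q(x)$ lies in $\widehat{(\mathcal{J}\circ\mathcal{L})^{fac}}(^{n}E;F)$; and conversely, given $P\in\mathcal{P}(^{n}E;F)$, fixing $e\in E$ with $\varphi(e)=1$ and polarizing, the map $x\mapsto \check P(x,\dots,x,e,\dots,e)$ (with $n-m$ copies of $e$) lands in $\mathcal{P}(^{m}E;F)$ and preserves membership in the ideal by Proposition~\ref{thm:(Teorema 112)}. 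So I would pick $\varphi\in E'$ and $e\in E$ with $\varphi(e)=1$, define $\Phi:\mathcal{P}(^{1}E;F)=\mathcal{L}(E;F)\to\mathcal{P}(^{n}E;F)$ by $\Phi(T)(x)=\varphi(x)^{n-1}T(x)$, and define $\Psi:\mathcal{P}(^{n}E;F)\to\mathcal{L}(E;F)$ by $\Psi(P)(x)=\check P(x,e,\dots,e)$; both are bounded linear maps, $\Phi$ maps $\mathcal{J}(E;F)$ into $\widehat{(\mathcal{J}\circ\mathcal{L})^{fac}}(^{n}E;F)$, $\Psi$ maps $\widehat{(\mathcal{J}\circ\mathcal{L})^{fac}}(^{n}E;F)$ into $\mathcal{J}(E;F)$, and $\Psi\circ\Phi=\mathrm{id}$ on $\mathcal{L}(E;F)$ (since $\check\Phi(T)(x,e,\dots,e)$ works out, up to the combinatorial polarization constant, to $T(x)$ — I would absorb the constant by rescaling $\varphi$ or $\Psi$). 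Then $\Psi\circ\pi\circ\Phi:\mathcal{L}(E;F)\to\mathcal{J}(E;F)$ is a projection, contradicting the $n=1$ case which was settled by Theorem~\ref{thm:(Teorema 10)}(a).

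The main obstacle I anticipate is the bookkeeping around polarization: verifying that $\Psi\circ\Phi$ is genuinely the identity (not just injective) requires that $\check{\Phi(T)}(x,e,\dots,e)$ reproduce $T(x)$ exactly, which forces a careful computation of the symmetric multilinear map associated to the product polynomial $\varphi^{n-1}T$ and its evaluation on the tuple $(x,e,\dots,e)$; the polarization formula produces a sum over how many arguments equal $x$ versus $e$, and one must check the coefficient of the relevant term is nonzero so it can be normalized away. A secondary point requiring care is confirming that $\widehat{(\mathcal{J}\circ\mathcal{L})^{fac}}$, the polynomial ideal associated to the multi-ideal $(\mathcal{J}\circ\mathcal{L})^{fac}$ via symmetrization, is genuinely stable under both operations — i.e., that Propositions~\ref{thm:(Teorema 111)} and~\ref{thm:(Teorema 112)}, stated for the multi-ideal, transfer to the associated polynomial ideal under the hat/symmetrization passage. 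Everything else (boundedness of $\Phi,\Psi$, the contradiction mechanism) is routine once these two points are pinned down.
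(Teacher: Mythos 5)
Your reduction map breaks down at the crucial identity $\Psi\circ\Phi=\mathrm{id}$. For $T\in\mathcal{L}(E;F)$ the symmetric $n$-linear form of $\Phi(T)=\varphi^{\,n-1}T$ is the symmetrization of $(x_1,\dots,x_n)\mapsto\varphi(x_1)\cdots\varphi(x_{n-1})T(x_n)$, and evaluating it at $(x,e,\dots,e)$ with $\varphi(e)=1$ gives
$$\Psi(\Phi(T))(x)=\check{\Phi(T)}(x,e,\dots,e)=\tfrac{1}{n}\,T(x)+\tfrac{n-1}{n}\,\varphi(x)\,T(e),$$
because the permutations that place $x$ in a $\varphi$-slot contribute the cross term $\varphi(x)T(e)$. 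For $n\ge 2$ this is not a scalar multiple of $T$, so the obstacle you flagged cannot be ``normalized away'' by rescaling $\varphi$ or $\Psi$: as written, $\Psi\circ\pi\circ\Phi$ does not restrict to the identity on $\mathcal{J}(E;F)$ and is not a projection. The gap is repairable, but the repair is exactly the missing idea: the defect $T\mapsto\varphi(\cdot)T(e)$ is rank one, hence of finite type and in $\mathcal{J}$, so you may replace $\Psi$ by $L(P)(x)=n\,\check P(x,e,\dots,e)-(n-1)\varphi(x)P(e)$, which does satisfy $L\circ\Phi=\mathrm{id}$ and still maps $\widehat{(\mathcal{J}\circ\mathcal{L})^{fac}}(^{n}E;F)$ into $\mathcal{J}(E;F)$ (Proposition \ref{thm:(Teorema 112)} for the first term, trivially for the rank-one correction); equivalently, invert $\Psi\circ\Phi=\tfrac{1}{n}\bigl(I+(n-1)K\bigr)$ with $K(T)=\varphi(\cdot)T(e)$ idempotent and finite-rank-valued.

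For comparison, the paper sidesteps this entirely by descending one degree at a time: it uses the Aron--Schottenloher/Blasco complementation of $\mathcal{P}(^{n}E;F)$ in $\mathcal{P}(^{n+1}E;F)$ via $A(P)=\varphi_0 P$ together with the explicit left inverse of \c{C}aliskan--Rueda, $B(P)(x)=\sum_{j=1}^{n+1}\binom{n+1}{j}(-1)^{j+1}\varphi_0^{j-1}(x)\check P(x,\dots,x,e,\dots,e)$, whose alternating binomial sum is precisely the systematic version of the correction above; Propositions \ref{thm:(Teorema 111)} and \ref{thm:(Teorema 112)} then show $A$ and $B$ preserve the ideal, and the induction bottoms out at Theorem \ref{thm:(Teorema 10)}(a), exactly as in your plan. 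So your overall strategy (multiply by powers of a functional to go up, freeze slots at $e$ to come down, use the two propositions to stay in the ideal, contradict the case $n=1$) is the same as the paper's; what fails is that your proposed left inverse is only the leading term of the correct one, and the claim that $\Psi\circ\Phi$ is the identity up to a nonzero constant is false for every $n\ge 2$.
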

\begin{proof}
When $n=1$ we have that $\widehat{(\mathcal{J}\circ\mathcal{L})^{fac}}(^{n}E;F)=\mathcal{J}\circ\mathcal{L}(E;F)$ and $\mathcal{P}(^{n}E; F)=\mathcal{L}(E; F)$. Thus the case $n=1$ follows from Theorem \ref{thm:(Teorema 10)} $(a)$. To prove the theorem by induction on $n$ it suffices to prove that
if $\widehat{(\mathcal{J}\circ\mathcal{L})^{fac}}(^{n+1}E;F)$ is complemented in $\mathcal{P}(^{n+1}E; F)$, then $\widehat{(\mathcal{J}\circ\mathcal{L})^{fac}}(^{n}E;F)$ is complemented in $\mathcal{P}(^{n}E; F)$. Aron and Schottenloher \cite[Proposition 5.3]{AR} proved that $\mathcal{P}(^{n}E; F)$ is isomorphic to a complemented subspace of
$\mathcal{P}(^{n+1}E; F)$ when $F$ is the scalar field, but their proof works equally well when $F$ is an arbitrary Banach space (see \cite[Proposition 5]{BLASCO}). Thus there
exist operators $A\in\mathcal{L}(\mathcal{P}(^{n}E; F); \mathcal{P}(^{n+1}E; F))$ and $B\in\mathcal{L}(\mathcal{P}(^{n+1}E; F);\mathcal{P}(^{n}E; F))$
such that $B\circ A=I$. The operator $A$ is of the form
$$A(P)(x)=\varphi_{0}(x)P(x)$$
for every $P\in\mathcal{P}(^{n}E; F)$ and $x\in E$, where $\varphi_{0}\in E^{\prime}$ verifies that $\|\varphi_{0}\|=1=\varphi_{0}(x_{0})$, where
$x_{0}\in E$ and $\|x_{0}\|=1$. Using Proposition \ref{thm:(Teorema 111)} we have that if $P\in \widehat{(\mathcal{J}\circ\mathcal{L})^{fac}}(^{n}E;F)$ then $A(P)\in \widehat{(\mathcal{J}\circ\mathcal{L})^{fac}}(^{n+1}E;F)$.  On the other hand, the operator $B$ is of the form $B=A^{-1}\circ D$, where $D: \mathcal{P}(^{n+1}E; F)\rightarrow \mathcal{P}(^{n+1}E; F)$ is defined by $D(P)(x)=P(x)- P(x-\varphi_{0}(x)x_{0})$ for every $P\in\mathcal{P}(^{n+1}E; F)$ and $x\in E$. More exactly $B(P)$ is given by

$$B(P)(x)=\sum_{j=1}^{n+1}\binom{n+1}{j}(-1)^{j+1}\varphi_{0}^{j-1}(x)\check{P}(\underbrace{x,\ldots,x}_{n+1-j \ times},\underbrace{e,\ldots,e}_{j \ times})$$

for all $x\in E$, (see \cite[p. 597]{CALIS}). Applying simultaneously Propositions \ref{thm:(Teorema 111)} and \ref{thm:(Teorema 112)} we can to see that if $P\in \widehat{(\mathcal{J}\circ\mathcal{L})^{fac}}(^{n+1}E;F)$ then $B(P)\in \widehat{(\mathcal{J}\circ\mathcal{L})^{fac}}(^{n}E;F)$.
\bigskip

Let us assume that $\widehat{(\mathcal{J}\circ\mathcal{L})^{fac}}(^{n+1}E;F)$ is complemented in $\mathcal{P}(^{n+1}E; F)$, and let $\pi: \mathcal{P}(^{n+1}E; F)\rightarrow \widehat{(\mathcal{J}\circ\mathcal{L})^{fac}}(^{n+1}E;F)$ be a projection. Consider the operator
$$\rho=B\circ \pi\circ A: \mathcal{P}(^{n}E; F)\rightarrow \widehat{(\mathcal{J}\circ\mathcal{L})^{fac}}(^{n}E;F).$$
If $P\in \widehat{(\mathcal{J}\circ\mathcal{L})^{fac}}(^{n}E;F)$, then $A(P)\in \widehat{(\mathcal{J}\circ\mathcal{L})^{fac}}(^{n+1}E;F)$, and therefore
$$\rho(P)=B\circ \pi\circ A(P)=B\circ A(P)=P.$$
Thus $\rho:\mathcal{P}(^{n}E; F)\rightarrow \widehat{(\mathcal{J}\circ\mathcal{L})^{fac}}(^{n}E;F)$ is a projection, and therefore $\widehat{(\mathcal{J}\circ\mathcal{L})^{fac}}(^{n}E;F)$ is complemented
in $\mathcal{P}(^{n}E; F)$. This completes the proof.

\end{proof}

Ghenciu \cite{IOANA} derived as corollaries of Theorem \ref{thm:(Teorema 1009)} results of several authors \cite{LEW},\cite{EMA}, \cite{FEDER}, \cite{KALTON} and \cite{KA}. We now apply Theorems \ref{thm:(Teorema 1111)} and \ref{thm:(Teorema 11)} to obtain versions about ideals of polynomials of those corollaries.

\begin{corollary} \label{thm:(Teorema 200)}
If $c_{0}\hookrightarrow F$ and $E^{\prime}$ contains
a weak star-null  sequence $(x^{\prime}_{n})$ such that $(x^{\prime}_{n_{k}})\notin \mathcal{C}_{\mathcal{I}}(E^{\prime})$ for all subsequence $(n_{k})\subset \mathbb{N}$, where $\mathcal{I}$ and $\mathcal{J}$ be closed operator ideal such that $\mathcal{J}\subset \mathcal{I}^{dual}$. Then $\mathcal{J}\circ \mathcal{P}(^{n}E; F)$ is not complemented in
$\mathcal{P}(^{n}E; F)$ for every $n\in \mathbb{N}$.
\end{corollary}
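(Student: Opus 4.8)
The plan is to deduce this corollary directly from Theorem~\ref{thm:(Teorema 1111)}$(a)$ by exhibiting a concrete Banach space $G$ with an unconditional basis, together with operators $R$ and $S$ satisfying the hypotheses there. The natural choice is $G=c_{0}$, with $(g_{n})=(e_{n})$ its canonical ($1$-unconditional) basis and $(g_{n}^{\prime})=(e_{n}^{\prime})$ the associated coordinate functionals, which form the canonical basis of $c_{0}^{\prime}=\ell_{1}$.

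First I would build $R$. Since $c_{0}\hookrightarrow F$, fix an isomorphism $T$ from $c_{0}$ onto a subspace of $F$ and let $R\in\mathcal{L}(c_{0};F)$ be $T$ composed with the inclusion of that subspace into $F$. Then $(R(e_{n}))=(T(e_{n}))$ is equivalent to the unit vector basis of $c_{0}$, hence is a semi-normalized basic sequence in $F$, as required. Next I would build $S$. Define $S\colon E\to c_{0}$ by $S(x)=(x_{n}^{\prime}(x))_{n}$. Because $(x_{n}^{\prime})$ is weak$^{\ast}$-null we have $x_{n}^{\prime}(x)\to 0$ for every $x\in E$, so $S(x)\in c_{0}$; and since a weak$^{\ast}$-null sequence is weak$^{\ast}$-bounded, hence norm-bounded by the uniform boundedness principle, say $\sup_{n}\|x_{n}^{\prime}\|=M<\infty$, we get $\|S(x)\|_{\infty}\le M\|x\|$, so $S\in\mathcal{L}(E;c_{0})$. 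A direct computation of the adjoint shows that for $\phi=(\phi_{k})\in\ell_{1}=c_{0}^{\prime}$ one has $S^{\prime}(\phi)=\sum_{k}\phi_{k}x_{k}^{\prime}$ in $E^{\prime}$; in particular $S^{\prime}(g_{n}^{\prime})=S^{\prime}(e_{n}^{\prime})=x_{n}^{\prime}$ for every $n$. Consequently $(S^{\prime}(g_{n_{k}}^{\prime}))=(x_{n_{k}}^{\prime})\notin\mathcal{C}_{\mathcal{I}}(E^{\prime})$ for every subsequence $(n_{k})\subset\mathbb{N}$, which is precisely the remaining hypothesis of Theorem~\ref{thm:(Teorema 1111)}$(a)$.

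With $G=c_{0}$, $R$, $S$ as above, and $\mathcal{I}$, $\mathcal{J}$ the given closed operator ideals with $\mathcal{J}\subset\mathcal{I}^{dual}$, Theorem~\ref{thm:(Teorema 1111)}$(a)$ applies verbatim and yields that $\mathcal{J}\circ\mathcal{P}(^{n}E;F)$ is not complemented in $\mathcal{P}(^{n}E;F)$ for every $n\in\mathbb{N}$, which is the assertion. I do not expect a genuine obstacle here, since the argument is purely a reduction; the only points that need (routine) care are the well-definedness and continuity of $S$ — where weak$^{\ast}$-nullness is used once to land in $c_{0}$ and once, via uniform boundedness, for boundedness — and the straightforward identification $S^{\prime}(e_{n}^{\prime})=x_{n}^{\prime}$. (The case $n=1$, giving the corresponding statement for $\mathcal{J}\circ\mathcal{L}(E;F)$ in $\mathcal{L}(E;F)$, is already subsumed, as Theorem~\ref{thm:(Teorema 1111)}$(a)$ covers all $n\in\mathbb{N}$.)
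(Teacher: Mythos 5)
Your reduction is correct and is essentially the paper's own route: the paper deduces this corollary from Theorem~\ref{thm:(Teorema 1111)}$(a)$ by noting (following Ghenciu) that the hypotheses there are satisfied, which is exactly your construction with $G=c_{0}$, $R$ coming from the copy of $c_{0}$ in $F$, and $S(x)=(x_{n}^{\prime}(x))_{n}$ so that $S^{\prime}(e_{n}^{\prime})=x_{n}^{\prime}$. You merely spell out the verification that the paper leaves implicit, and all the details (semi-normalized basic sequence, boundedness of $S$, identification of the adjoint) check out.
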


\begin{corollary} \label{thm:(Teorema 201)}
If $c_{0}\hookrightarrow F$ and $E$ contains a complemented copy of $c_{0}$. Then $\mathcal{J}\circ \mathcal{P}(^{n}E; F)$ is not complemented in $\mathcal{P}(^{n}E; F)$ for every $n\in \mathbb{N}$, where $\mathcal{J}\subset \mathcal{L}_{wK}$ is a closed operator ideal.
\end{corollary}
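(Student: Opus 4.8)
The plan is to derive Corollary~\ref{thm:(Teorema 201)} directly from Corollary~\ref{thm:(Teorema 200)} by taking $\mathcal{I}=\mathcal{L}_{wK}$ and manufacturing the weak-star null sequence in $E^{\prime}$ that Corollary~\ref{thm:(Teorema 200)} requires. First I would record two elementary facts. By Gantmacher's theorem an operator is weakly compact if and only if its adjoint is, so $\mathcal{L}_{wK}^{dual}=\mathcal{L}_{wK}$; consequently the hypothesis $\mathcal{J}\subset\mathcal{L}_{wK}$ is precisely $\mathcal{J}\subset\mathcal{I}^{dual}$ for the closed operator ideal $\mathcal{I}=\mathcal{L}_{wK}$, and $\mathcal{C}_{\mathcal{I}}(E^{\prime})$ is exactly the family of relatively weakly compact subsets of $E^{\prime}$.

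Next I would use the complemented copy of $c_{0}$ in $E$: fix $i\in\mathcal{L}(c_{0};E)$ and $p\in\mathcal{L}(E;c_{0})$ with $p\circ i=I_{c_{0}}$. Writing $(e_{n}^{\prime})$ for the coordinate functionals of the canonical basis $(e_{n})$ of $c_{0}$ (that is, the unit vector basis of $\ell_{1}=c_{0}^{\prime}$), set $x_{n}^{\prime}:=p^{\prime}(e_{n}^{\prime})\in E^{\prime}$. For each $x\in E$ one has $\langle x_{n}^{\prime},x\rangle=\langle e_{n}^{\prime},p(x)\rangle=(p(x))_{n}\to 0$ because $p(x)\in c_{0}$, so $(x_{n}^{\prime})$ is weak-star null in $E^{\prime}$.

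Then I would check that no subsequence of $(x_{n}^{\prime})$ belongs to $\mathcal{C}_{\mathcal{I}}(E^{\prime})$. If $(x_{n_{k}}^{\prime})$ were relatively weakly compact in $E^{\prime}$, then, applying the bounded operator $i^{\prime}\in\mathcal{L}(E^{\prime};\ell_{1})$ — for which $i^{\prime}\circ p^{\prime}=I_{\ell_{1}}$, hence $i^{\prime}(x_{n_{k}}^{\prime})=e_{n_{k}}^{\prime}$ — and using that continuous linear images of relatively weakly compact sets are relatively weakly compact, the set $\{e_{n_{k}}^{\prime}:k\in\mathbb{N}\}$ would be relatively weakly compact in $\ell_{1}$; by the Eberlein--\v{S}mulian theorem it would contain a weakly convergent subsequence, and by the Schur property of $\ell_{1}$ that subsequence would converge in norm, which is impossible since $\|e_{n}^{\prime}-e_{m}^{\prime}\|_{1}=2$ for $n\neq m$. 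Hence $(x_{n_{k}}^{\prime})\notin\mathcal{C}_{\mathcal{I}}(E^{\prime})$ for every subsequence $(n_{k})\subset\mathbb{N}$.

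Finally, since $c_{0}\hookrightarrow F$ by hypothesis, Corollary~\ref{thm:(Teorema 200)} applies with $\mathcal{I}=\mathcal{L}_{wK}$, the given $\mathcal{J}$, and the sequence $(x_{n}^{\prime})$, and yields that $\mathcal{J}\circ\mathcal{P}(^{n}E;F)$ is not complemented in $\mathcal{P}(^{n}E;F)$ for every $n\in\mathbb{N}$. I do not expect a genuine obstacle here; the only point requiring a moment's care is the transfer of relative weak compactness, which I would handle by pushing the hypothetical weakly compact set through $i^{\prime}$ rather than by comparing the weak topology of $E^{\prime}$ with the one it induces on the $\ell_{1}$-copy.
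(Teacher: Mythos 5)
Your proof is correct and takes essentially the same route as the paper: the paper obtains Corollary \ref{thm:(Teorema 201)} from Theorem \ref{thm:(Teorema 1111)}(a) via Ghenciu's observation that the embedding $c_{0}\hookrightarrow F$ and the adjoint of the projection of $E$ onto its $c_{0}$-copy provide the required operators $R$ and $S$ with $\mathcal{I}=\mathcal{L}_{wK}$ (so that $\mathcal{J}\subset\mathcal{L}_{wK}=\mathcal{I}^{dual}$ by Gantmacher), which is exactly the data you supply, merely packaged through Corollary \ref{thm:(Teorema 200)} since your sequence $x_{n}^{\prime}=p^{\prime}(e_{n}^{\prime})$ corresponds to $S=p$. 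Your explicit check that no subsequence of $(p^{\prime}(e_{n}^{\prime}))$ lies in $\mathcal{C}_{\mathcal{L}_{wK}}(E^{\prime})$, by pushing through $i^{\prime}$ and using Eberlein--\v{S}mulian together with the Schur property of $\ell_{1}$, is the same argument, just spelled out more carefully than the paper's one-line reduction.
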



\begin{corollary} \label{thm:(Teorema 202)}
If $F$ contains a copy of $\ell_{1}$ and $\mathcal{L}(E;\ell_{1})\neq \mathcal{L}_{K}(E;\ell_{1})$. Then $\mathcal{J}\circ \mathcal{P}(^{n}E; F)$ is not complemented in $\mathcal{P}(^{n}E; F)$ for every $n\in \mathbb{N}$, where $\mathcal{J}\subset \mathcal{L}_{wK}$ is a closed operator ideal .
\end{corollary}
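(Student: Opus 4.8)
The plan is to produce an isomorphic copy of $c_{0}$ inside $\mathcal{J}\circ\mathcal{P}(^{n}E;F)$ and then invoke the $c_{0}$-embedding theorem; since the finite type $n$-homogeneous polynomials lie in every polynomial ideal, it suffices to find such a copy already in the subspace $\mathcal{P}_{f}(^{n}E;F)$ of finite type polynomials, and the same data also feeds Theorem \ref{thm:(Teorema 1111)}$(a)$. First I would extract a non-compact operator: by hypothesis there is $T\in\mathcal{L}(E;\ell_{1})$ that is not compact, and since $\ell_{1}$ has the Schur property $T$ is not even weakly compact, so by Rosenthal's $\ell_{1}$-theorem there is $(x_{j})\subset B_{E}$ for which $(Tx_{j})$ is a basic sequence equivalent to the unit vector basis of $\ell_{1}$. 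A standard gliding-hump argument (pass to a coordinatewise convergent subsequence of the differences $x_{j}-x_{j+1}$, subtract the coordinatewise limit, perturb) lets me assume, after replacing $T$ by $P_{B}\circ T$ for a suitable coordinate projection $P_{B}$ on $\ell_{1}$, that the vectors $Tx_{j}$ have pairwise disjoint supports $I_{j}\subset\mathbb{N}$ and $\delta\le a_{j}:=\|Tx_{j}\|_{1}\le\|T\|$ for some $\delta>0$.

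Next I would fix the two auxiliary sequences. Let $i:\ell_{1}\hookrightarrow F$ be an isomorphic embedding, and set $h_{j}:=i\,Tx_{j}\in F$; then $(h_{j})$ is semi-normalized basic (equivalent to the $\ell_{1}$-basis), so it has bounded biorthogonal functionals, which I extend by Hahn--Banach to a bounded sequence $(h'_{j})\subset F'$ with $h'_{j}(h_{k})=\delta_{jk}$. On the other side let $u_{j}:=\sum_{m\in I_{j}}\operatorname{sgn}\big((Tx_{j})_{m}\big)e_{m}^{*}\in\ell_{\infty}$; these are norm-one functionals with pairwise disjoint supports and $\langle u_{j},Tx_{k}\rangle=a_{j}\delta_{jk}$. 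Put $\varphi_{j}:=T'(u_{j})\in E'$, so $\varphi_{j}(x_{k})=a_{j}\delta_{jk}$ and, by disjointness of the supports, $\sum_{j}|\varphi_{j}(x)|\le\|Tx\|_{1}\le\|T\|\,\|x\|$ for every $x\in E$.

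Now define $P_{j}\in\mathcal{P}(^{n}E;F)$ by $P_{j}(x)=\varphi_{j}(x)^{n}h_{j}$; each $P_{j}$ is of finite type, hence lies in $\mathcal{P}_{f}(^{n}E;F)\subset\mathcal{J}\circ\mathcal{P}(^{n}E;F)$, and $(\|P_{j}\|)$ is bounded and bounded away from $0$. The claim is that $(P_{j})$ is equivalent to the $c_{0}$-basis. For the lower bound, if $\lambda$ is finitely supported and $|\lambda_{k}|=\max_{j}|\lambda_{j}|$, then $\big(\sum_{j}\lambda_{j}P_{j}\big)(x_{k})=\lambda_{k}a_{k}^{n}h_{k}$ since $\varphi_{j}(x_{k})=a_{j}\delta_{jk}$, so applying $h'_{k}$ gives $\big\|\sum_{j}\lambda_{j}P_{j}\big\|\ge\delta^{n}\big(\sup_{k}\|h'_{k}\|\big)^{-1}\max_{j}|\lambda_{j}|$. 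For the upper bound, for $\|x\|\le1$ the disjointness of the blocks $Tx_{j}$ in $\ell_{1}$ yields $\big\|\sum_{j}\lambda_{j}P_{j}(x)\big\|_{F}\le\|i\|\sum_{j}|\lambda_{j}|\,|\varphi_{j}(x)|^{n}a_{j}\le\|i\|\,\|T\|\,\big(\max_{j}|\lambda_{j}|\big)\sum_{j}|\varphi_{j}(x)|^{n}$, which by $|\varphi_{j}(x)|\le\|T\|$ and $\sum_{j}|\varphi_{j}(x)|\le\|T\|$ is at most a constant depending only on $n$, $\|T\|$, $\|i\|$ times $\max_{j}|\lambda_{j}|$. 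Hence $\overline{[P_{j}]}\cong c_{0}$, so $c_{0}\hookrightarrow\mathcal{J}\circ\mathcal{P}(^{n}E;F)$, and the $c_{0}$-embedding theorem gives that $\mathcal{J}\circ\mathcal{P}(^{n}E;F)$ is not complemented in $\mathcal{P}(^{n}E;F)$.

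For the route through Theorem \ref{thm:(Teorema 1111)}$(a)$ I would instead take $G=\ell_{1}$ with unit vector basis $(g_{n})$ and coordinate functionals $(g'_{n})$, $R=i$, and $S\in\mathcal{L}(E;\ell_{1})$ given by $Sx:=\sum_{j}\langle u_{j},Tx\rangle e_{j}$ (bounded, as $\sum_{j}|\langle u_{j},Tx\rangle|\le\|Tx\|_{1}$); then $(Rg_{n})$ is semi-normalized basic in $F$, $S'(g'_{j})=\varphi_{j}$, and $\|S'(g'_{j})-S'(g'_{k})\|\ge\langle u_{j}-u_{k},Tx_{j}\rangle=a_{j}\ge\delta$ for $j\ne k$, so $(S'(g'_{n_{k}}))$ is $\delta$-separated, hence not relatively compact, for every subsequence $(n_{k})$; thus $(S'(g'_{n_{k}}))\notin\mathcal{C}_{\mathcal{I}}(E')$ as soon as the $\mathcal{I}$-bounded subsets of $E'$ are relatively compact, which covers $\mathcal{I}=\mathcal{L}_{K}$ and also $\mathcal{I}=\mathcal{J}^{dual}$ for ideals like $\mathcal{J}=\mathcal{QN}_{p}$ (where $\mathcal{J}^{dual}=\mathcal{K}_{p}$). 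The step I expect to be the genuine obstacle is covering \emph{all} closed $\mathcal{J}\subset\mathcal{L}_{wK}$ simultaneously: the coordinate functionals of the unit vector basis of $\ell_{1}$ are weakly null in $\ell_{\infty}$, so $(S'(g'_{n}))$ is weakly null and therefore relatively weakly compact in $E'$, and for ideals $\mathcal{J}$ near $\mathcal{L}_{wK}$ Theorem \ref{thm:(Teorema 1111)} alone is inconclusive — it is here that one must fall back on the copy of $c_{0}$ constructed above; the remaining delicate point is making the gliding-hump reduction to exact disjointness of the blocks fully rigorous.
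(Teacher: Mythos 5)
Your construction of the $c_{0}$-copy spanned by the finite type polynomials $P_{j}=\varphi_{j}^{\,n}h_{j}$ is fine, but the step you lean on to finish — ``invoke the $c_{0}$-embedding theorem'' — is exactly where the argument fails for the generality the corollary claims. The only results of this kind available (Corollary \ref{cor 36} and Proposition \ref{cor 37}) are stated and proved for closed ideals $\mathcal{J}\subset\mathcal{L}_{K}$: the proof of Corollary \ref{cor 36} excludes copies of $\ell_{\infty}$ in $\mathcal{J}(E;F)$ by applying Kalton's theorem on spaces of \emph{compact} operators, and this is where $\mathcal{J}\subset\mathcal{L}_{K}$ is essential. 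The corollary under discussion allows every closed $\mathcal{J}\subset\mathcal{L}_{wK}$, and for an ideal with $\mathcal{L}_{K}\subsetneq\mathcal{J}\subset\mathcal{L}_{wK}$ (for instance $\mathcal{J}=\mathcal{L}_{wK}$ itself) no statement of the form ``$c_{0}\hookrightarrow\mathcal{J}\circ\mathcal{P}(^{n}E;F)$ implies non-complementation'' is proved in the paper, nor by you. You correctly diagnose that your data cannot feed Theorem \ref{thm:(Teorema 1111)}$(a)$ for such $\mathcal{J}$ — your $u_{j}$ are disjointly supported and bounded in $\ell_{\infty}$, hence weakly null, so $\varphi_{j}=T'(u_{j})$ is weakly null and therefore relatively weakly compact, while any admissible $\mathcal{I}$ with $\mathcal{I}^{dual}\supset\mathcal{L}_{wK}$ forces you to need non-relative weak compactness — but flagging the obstacle and then ``falling back on the copy of $c_{0}$'' does not remove it. As written, your proof establishes the corollary only for $\mathcal{J}\subset\mathcal{L}_{K}$.

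The paper's own proof is different and contains no construction at all: it observes (quoting Ghenciu's derivation of \cite[Corollary 5]{IOANA}) that $E$ and $F$ satisfy the hypothesis of Theorem \ref{thm:(Teorema 1009)}$(b)$, i.e.\ there are $R,S,G$ with $(R(g_{n}))$ semi-normalized basic and $(S'(g'_{n}))$ \emph{not relatively weakly compact}, and then applies Theorem \ref{thm:(Teorema 1111)}$(a)$ with $\mathcal{I}=\mathcal{L}_{wK}$, for which $\mathcal{C}_{\mathcal{I}}(E')$ is the family of relatively weakly compact sets and $\mathcal{I}^{dual}=\mathcal{L}_{wK}$ by Gantmacher's theorem, so every closed $\mathcal{J}\subset\mathcal{L}_{wK}$ is admissible. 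To recover the stated generality along your lines you would therefore have to exhibit $R,S$ (possibly with a different $G$) such that no subsequence of $(S'(g'_{n}))$ is relatively weakly compact — the weak analogue of your $\delta$-separation argument — and your own computation shows that your particular $S$ cannot do this; indeed your observation that disjointly supported data produce weakly null functionals points at a genuine difficulty in this ``weakly compact'' route, but your proposal does not supply the missing step.
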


When $n=1$ and $\mathcal{J}=\mathcal{L}_{wK}$, Corollaries \ref{thm:(Teorema 200)}, \ref{thm:(Teorema 201)} and \ref{thm:(Teorema 202)} correspond to \cite[Corollaries 2,3 and 5]{IOANA}.
Ghenciu derived those corollaries by observing that $E$ and $F$ satisfy the hypothesis of Theorem  \ref{thm:(Teorema 1009)} $(b)$. Since the hypothesis
of Theorem \ref{thm:(Teorema 1009)} $(b)$ implies  the hypothesis of Theorem \ref{thm:(Teorema 1111)} $(a)$, we see that Corollaries \ref{thm:(Teorema 200)}, \ref{thm:(Teorema 201)} and \ref{thm:(Teorema 202)} follow from Theorem \ref{thm:(Teorema 1111)} $(a)$.

\begin{corollary}\label{cor 33}
 If $F$ contains a copy of $c_{0}$ and $E$ is infinite dimensional, then for all closed operator ideal $\mathcal{J}\subset \mathcal{L}_{K}$ we have that:
\begin{enumerate}
\item [(a)] $\mathcal{J}\circ \mathcal{P}(^{n}E; F)$ is not complemented in $\mathcal{P}(^{n}E; F)$ for every $n\in \mathbb{N}$.

\item [(b)] $\widehat{(\mathcal{J}\circ\mathcal{L})^{fac}}(^{n}E;F)$ is not complemented in
$\mathcal{P}(^{n}E; F)$ for every $n\in \mathbb{N}$.
\end{enumerate}
\end{corollary}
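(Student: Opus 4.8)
The plan is to reduce Corollary~\ref{cor 33} to the main theorems already proved, by exhibiting a Banach space $G$ with an unconditional basis together with operators $R$ and $S$ meeting the hypotheses of Theorems~\ref{thm:(Teorema 1111)} and \ref{thm:(Teorema 11)}. The natural choice is $G=c_{0}$ with its canonical basis $(e_{n})$; its coordinate functionals are the canonical basis $(e_{n})$ of $\ell_{1}=c_{0}^{\prime}$. Since $c_{0}\hookrightarrow F$, there is an isomorphic embedding $R\colon c_{0}\to F$, and then $(R(e_{n}))$ is a semi-normalized basic sequence in $F$. The only remaining point is to produce $S\in\mathcal{L}(E;c_{0})$ such that no subsequence of $(S^{\prime}(e_{n}))$ lies in $\mathcal{C}_{\mathcal{I}}(E^{\prime})$, where $\mathcal{I}=\mathcal{I}$ is any closed operator ideal with $\mathcal{J}\subset\mathcal{I}^{dual}$ — and since $\mathcal{J}\subset\mathcal{L}_{K}$, one checks $\mathcal{L}_{K}\subset\mathcal{K}^{dual}$, so we may simply take $\mathcal{I}=\mathcal{K}$, i.e.\ $\mathcal{C}_{\mathcal{I}}(E^{\prime})$ is the family of relatively norm-compact subsets of $E^{\prime}$.

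Next I would construct $S$. Because $E$ is infinite dimensional, by the Josefson--Nissenzweig theorem there is a weak$^{\ast}$-null sequence $(x_{n}^{\prime})$ in $E^{\prime}$ with $\|x_{n}^{\prime}\|=1$ for all $n$. Passing to a subsequence we may assume $(x_{n}^{\prime})$ is weak$^{\ast}$-null and semi-normalized; define $S\colon E\to c_{0}$ by $S(x)=(x_{n}^{\prime}(x))_{n}$, which is a well-defined bounded operator precisely because $(x_{n}^{\prime})$ is weak$^{\ast}$-null and bounded. A direct computation gives $S^{\prime}(e_{n})=x_{n}^{\prime}$. Now for any subsequence $(n_{k})$, the set $\{S^{\prime}(e_{n_{k}})\}=\{x_{n_{k}}^{\prime}\}$ is a norm-one weak$^{\ast}$-null sequence; if it were relatively norm-compact, it would have a norm-convergent subsequence, and the limit would be both weak$^{\ast}$-null (hence zero) and of norm one, a contradiction. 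Hence $(S^{\prime}(e_{n_{k}}))\notin\mathcal{C}_{\mathcal{K}}(E^{\prime})$ for every subsequence.

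With $G=c_{0}$, $R$, and $S$ in hand, part~(a) is immediate from Theorem~\ref{thm:(Teorema 1111)}~(a), applied with $\mathcal{I}=\mathcal{K}$ and the given $\mathcal{J}$ (noting $\mathcal{J}\subset\mathcal{L}_{K}\subset\mathcal{K}^{dual}=\mathcal{I}^{dual}$). Likewise part~(b) follows from Theorem~\ref{thm:(Teorema 11)} with the same data. One should remark that Corollary~\ref{thm:(Teorema 200)} already covers part~(a) once one observes that the Josefson--Nissenzweig sequence is exactly a weak$^{\ast}$-null sequence no subsequence of which is $\mathcal{K}$-bounded; so part~(a) could alternatively be cited directly from Corollary~\ref{thm:(Teorema 200)}. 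I would present both, but phrase the proof of~(b) in full since only Theorem~\ref{thm:(Teorema 11)} handles the $(\mathcal{J}\circ\mathcal{L})^{fac}$ hat-ideal.

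The only mild obstacle is the verification that $\mathcal{J}\subset\mathcal{L}_{K}$ forces $\mathcal{J}\subset\mathcal{K}^{dual}$: this is just the classical Schauder theorem that a compact operator has compact adjoint, so $T\in\mathcal{J}\subset\mathcal{L}_{K}$ gives $T^{\prime}$ compact, i.e.\ $T\in\mathcal{K}^{dual}$. Everything else is routine bookkeeping: confirming $S$ is bounded, computing $S^{\prime}$ on the unit vectors, and checking $(R(e_{n}))$ is semi-normalized basic (immediate, as $R$ is an isomorphism onto its range). No genuinely hard step is expected; the content of the corollary is entirely carried by the two theorems it invokes.
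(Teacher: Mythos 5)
Your proposal is correct and follows essentially the same route as the paper: the paper disposes of Corollary~\ref{cor 33} by noting that (as in Ghenciu's derivation of \cite[Corollary 4]{IOANA}) the pair $E$, $F$ satisfies the hypotheses of Theorems~\ref{thm:(Teorema 1111)}~(a) and \ref{thm:(Teorema 11)}, which is exactly your reduction with $G=c_{0}$, $R$ the embedding, $S$ built from a Josefson--Nissenzweig sequence, and $\mathcal{I}=\mathcal{K}$ via Schauder's theorem. Your only addition is to spell out explicitly that no subsequence of the normalized weak$^{\ast}$-null sequence is relatively compact, a verification the paper leaves implicit when it asserts that Ghenciu's hypothesis implies the subsequence condition of its theorems.
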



\begin{corollary}\label{cor 34}
 If $E$ contains a complemented copy of $\ell_{1}$ and $F$ is infinite dimensional, then for all closed operator ideal $\mathcal{J}\subset \mathcal{L}_{K}$ we have that:
\begin{enumerate}
\item [(a)] $\mathcal{J}\circ \mathcal{P}(^{n}E; F)$ is not complemented in $\mathcal{P}(^{n}E; F)$ for every $n\in \mathbb{N}$.

\item [(b)] $\widehat{(\mathcal{J}\circ\mathcal{L})^{fac}}(^{n}E;F)$ is not complemented in
$\mathcal{P}(^{n}E; F)$ for every $n\in \mathbb{N}$.
\end{enumerate}
\end{corollary}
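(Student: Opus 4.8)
The plan is to reduce both parts to the already-established Theorem \ref{thm:(Teorema 1111)}(a) and Theorem \ref{thm:(Teorema 11)} by exhibiting the spaces $G$, $R$, and $S$ demanded by their hypotheses. Since $E$ contains a complemented copy of $\ell_{1}$, there are operators $i\in\mathcal{L}(\ell_{1};E)$ and $q\in\mathcal{L}(E;\ell_{1})$ with $q\circ i=I_{\ell_{1}}$; in particular $q$ is a non-compact (indeed non-weakly-compact) surjection of $E$ onto $\ell_{1}$. On the other side, $F$ is infinite dimensional, so by a standard consequence of Josefson--Nissenzweig there is a weak$^{*}$-null normalized sequence in $F^{\prime}$; dually, one can produce a normalized sequence in $F$ together with a bounded biorthogonal system. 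Concretely, I would take $G=\ell_{1}$ with its canonical unconditional basis $(g_{n})=(u_{n})$ and coordinate functionals $(g^{\prime}_{n})=(u_{n}^{\prime})$, and set $S=q\in\mathcal{L}(E;\ell_{1})$, so that $S^{\prime}(g^{\prime}_{n})=q^{\prime}(u_{n}^{\prime})$ is, up to the norm-one biorthogonality, the image under the adjoint of a complemented embedding; since $q\circ i=I$, the sequence $(q^{\prime}(u_{n}^{\prime}))$ has a bounded below "recovery" map and therefore no subsequence of it can be relatively compact (if some $(q^{\prime}(u_{n_k}^{\prime}))$ were relatively compact, composing with $i^{\prime}$ would make $(u_{n_k}^{\prime})=(i^{\prime}q^{\prime}(u_{n_k}^{\prime}))$ relatively compact in $\ell_{\infty}=\ell_1^{\prime}$, which is false). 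Hence $(S^{\prime}(g^{\prime}_{n_k}))\notin\mathcal{C}_{\mathcal{L}_{K}}(E^{\prime})$ for every subsequence, and a fortiori $(S^{\prime}(g^{\prime}_{n_k}))\notin\mathcal{C}_{\mathcal{I}}(E^{\prime})$ for any closed operator ideal $\mathcal{I}\subset\mathcal{L}_{K}$.

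For the operator $R\in\mathcal{L}(\ell_{1};F)$ I would use the infinite-dimensionality of $F$ to pick a semi-normalized basic sequence $(f_{n})$ in $F$ (Mazur's theorem guarantees one exists in any infinite-dimensional Banach space) and define $R(u_{n})=f_{n}$, extended by $R\big(\sum a_{n}u_{n}\big)=\sum a_{n}f_{n}$, which is bounded since $\|\sum a_n f_n\|\le \sup_n\|f_n\|\sum|a_n|$. Then $(R(g_{n}))=(f_{n})$ is a semi-normalized basic sequence in $F$ by construction. Now apply Theorem \ref{thm:(Teorema 1111)}(a) with $\mathcal{J}\subset\mathcal{L}_{K}$ and, say, $\mathcal{I}=\mathcal{L}_{K}$ (note $\mathcal{L}_{K}=\mathcal{L}_{K}^{dual}$ by Schauder's theorem, so $\mathcal{J}\subset\mathcal{L}_{K}=\mathcal{I}^{dual}$ is satisfied): this yields that $\mathcal{J}\circ\mathcal{P}(^{n}E;F)$ is not complemented in $\mathcal{P}(^{n}E;F)$ for every $n\in\mathbb{N}$, which is part (a). For part (b), the exact same data $(G,R,S)$ satisfy the hypotheses of Theorem \ref{thm:(Teorema 11)}, which then gives that $\widehat{(\mathcal{J}\circ\mathcal{L})^{fac}}(^{n}E;F)$ is not complemented in $\mathcal{P}(^{n}E;F)$ for every $n\in\mathbb{N}$.

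The one point that needs genuine care — the main obstacle — is verifying the non-compactness condition $(S^{\prime}(g^{\prime}_{n_k}))\notin\mathcal{C}_{\mathcal{I}}(E^{\prime})$ cleanly, since $S^{\prime}(g^{\prime}_n)=q^{\prime}(u^{\prime}_n)$ and one must rule out relative $\mathcal{I}$-compactness of every subsequence, not just the whole sequence. The clean argument is the factorization one sketched above: $i^{\prime}\colon E^{\prime}\to\ell_1^{\prime}=\ell_{\infty}$ satisfies $i^{\prime}(q^{\prime}(u^{\prime}_n))=(q\circ i)^{\prime}(u^{\prime}_n)=u^{\prime}_n$, and $(u^{\prime}_n)$ — the unit vector basis of $\ell_{\infty}$ sitting inside $\ell_\infty$ — has no norm-convergent subsequence (consecutive terms are at distance $1$), so no subsequence of $(q^{\prime}(u^{\prime}_n))$ can lie in a norm-compact set; since $\mathcal{C}_{\mathcal{I}}(E^{\prime})\subset\mathcal{C}_{\mathcal{L}_K}(E^{\prime})=\{\text{relatively norm-compact sets}\}$ for $\mathcal{I}\subset\mathcal{L}_K$, we are done. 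Everything else is bookkeeping: checking that $R$ and $S$ are bounded, that $(f_n)$ can be chosen semi-normalized and basic, and that the ideal inclusion $\mathcal{J}\subset\mathcal{I}^{dual}$ holds for the choice $\mathcal{I}=\mathcal{L}_K$.
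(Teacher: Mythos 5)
Your proposal is correct and takes essentially the same route as the paper, which obtains Corollary \ref{cor 34} by observing (following Ghenciu's argument) that the data $G=\ell_{1}$, $S=q$ the projection onto the complemented copy, and $R$ sending the unit vector basis to a semi-normalized basic sequence of $F$ satisfy the hypotheses of Theorems \ref{thm:(Teorema 1111)}(a) and \ref{thm:(Teorema 11)} with $\mathcal{I}=\mathcal{L}_{K}$. Your explicit check that \emph{no subsequence} of $(S^{\prime}(g^{\prime}_{n}))=(q^{\prime}(u^{\prime}_{n}))$ is relatively compact, via $i^{\prime}\circ q^{\prime}=I_{\ell_{\infty}}$ and $\|u^{\prime}_{n}-u^{\prime}_{m}\|=1$, is precisely the point the paper leaves implicit, so your write-up is, if anything, more careful.
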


When $n=1$ and $\mathcal{J}=\mathcal{L}_{K}$, Corollaries \ref{cor 33} and \ref{cor 34} correspond to \cite[Corollaries 4 and 6]{IOANA}.
Ghenciu derived those corollaries by observing that $E$ and $F$ satisfy the hypothesis of Theorem \ref{thm:(Teorema 1009)} $(a)$. Since the hypothesis
of Theorem \ref{thm:(Teorema 1009)} $(a)$ implies the hypothesis of Theorems \ref{thm:(Teorema 1111)} $(a)$ and \ref{thm:(Teorema 11)}, we see that Corollaries \ref{cor 33} and \ref{cor 34} follow from Theorems \ref{thm:(Teorema 1111)} $(a)$ and \ref{thm:(Teorema 11)}.

\begin{corollary}\label{cor 35}
 If $E$ contains a copy of $\ell_{1}$ and $F$ contains a copy of $\ell_{p}$, with $2\leq p<\infty$, then for all closed operator ideal $\mathcal{J}\subset \mathcal{L}_{K}$ we have that:
\begin{enumerate}
\item [(a)] $\mathcal{J}\circ \mathcal{P}(^{n}E; F)$ is not complemented in $\mathcal{P}(^{n}E; F)$ for every $n\in \mathbb{N}$.

\item [(b)] $\widehat{(\mathcal{J}\circ\mathcal{L})^{fac}}(^{n}E;F)$ is not complemented in
$\mathcal{P}(^{n}E; F)$ for every $n\in \mathbb{N}$.
\end{enumerate}
\end{corollary}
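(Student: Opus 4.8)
The plan is to reduce this to Corollary \ref{cor 33} by producing, from the hypotheses on $E$ and $F$, the data required to apply Theorems \ref{thm:(Teorema 1111)}(a) and \ref{thm:(Teorema 11)} with $\mathcal{I}=\mathcal{J}=\mathcal{L}_K$; in fact I expect the cleanest route is to verify that $F$ contains a copy of $c_0$ and that $E$ is infinite dimensional, so that Corollary \ref{cor 33} applies directly. Since $F\supset \ell_p$ with $2\le p<\infty$ and every infinite-dimensional $\ell_p$ contains a copy of $c_0$? — no, that is false, so instead I would argue as follows. First, $E\supset \ell_1$ and $F\supset \ell_p$ both being infinite dimensional, it suffices to manufacture operators $R\in\mathcal{L}(G;F)$ and $S\in\mathcal{L}(E;G)$ with $G$ having an unconditional basis, $(R(g_n))$ semi-normalized basic in $F$, and $(S'(g'_{n_k}))$ not relatively compact in $E'$ for every subsequence; then Theorem \ref{thm:(Teorema 1111)}(a) and Theorem \ref{thm:(Teorema 11)} give both conclusions at once.

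First I would take $G=\ell_p$ with its canonical unconditional basis $(g_n)=(e_n^{(p)})$ and coordinate functionals $(g'_n)=(e_n^{(p')})\in \ell_{p'}\subset \ell_p'$. Since $F$ contains an isomorphic copy of $\ell_p$, there is an into isomorphism $R\colon \ell_p\to F$, and then $(R(g_n))$ is a semi-normalized basic sequence in $F$. Next I would build $S\in\mathcal{L}(E;\ell_p)$ whose adjoint sends $(g'_n)$ to a sequence with no norm-convergent subsequence. Since $E$ contains a copy of $\ell_1$, by a standard lifting argument there is a bounded sequence $(x_n)\subset E$ equivalent to the $\ell_1$-basis; as $2\le p<\infty$ one has $p'\le 2\le p$, and the formal identity realizes a bounded operator from a subspace so that the map $x\mapsto (x_n'(x))_n$ — for a suitable bounded sequence of functionals — lands in $\ell_p$. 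Concretely: because $E\supset\ell_1$, there is a (not necessarily bounded-below) operator, in fact a quotient-type map, and dually an operator $S\colon E\to\ell_p$ with $S'\colon \ell_{p'}\to E'$ such that $S'(e_n^{(p')})$ is semi-normalized; the key point is that $p'<\infty$, so a semi-normalized weakly null sequence in $E'$ that is equivalent to the $\ell_{p'}$ (or $c_0$) basis has no norm-convergent subsequence, giving $(S'(g'_{n_k}))\notin \mathcal{C}_{\mathcal{L}_K}(E')$ for every subsequence $(n_k)$. One then checks $\mathcal{C}_{\mathcal{L}_K}(E')$ is exactly the relatively norm-compact subsets of $E'$, so the hypothesis of Theorems \ref{thm:(Teorema 1111)}(a) and \ref{thm:(Teorema 11)} is met.

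With $R$ and $S$ in hand, Theorem \ref{thm:(Teorema 1111)}(a) applied with $\mathcal{J}\subset\mathcal{L}_K$ (so $\mathcal{J}\subset\mathcal{L}_K=\mathcal{L}_K^{dual}\cap\ldots$, i.e.\ $\mathcal{J}\subset\mathcal{I}^{dual}$ with $\mathcal{I}=\mathcal{L}_K$, using that $\mathcal{L}_K^{dual}=\mathcal{L}_K$ by Schauder's theorem) yields (a), and Theorem \ref{thm:(Teorema 11)} yields (b), for every $n\in\mathbb N$. The main obstacle I anticipate is the construction of $S$: one must ensure simultaneously that $S$ is a well-defined bounded operator $E\to\ell_p$ and that $S'$ maps the $\ell_{p'}$-basis to a non-relatively-compact sequence in $E'$, which is exactly where the copy of $\ell_1$ in $E$ and the restriction $p\ge 2$ (equivalently $p'\le 2$) are used — the copy of $\ell_1$ gives an operator onto a dense range in a space where $p'$-summing behaviour is available, and $p'\le 2$ guarantees the relevant sequence of functionals is $p$-summable so that $S$ actually maps into $\ell_p$ rather than a larger sequence space. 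Once that operator is produced, the rest is a direct citation of the theorems just proved, exactly as Corollaries \ref{cor 33} and \ref{cor 34} were deduced.
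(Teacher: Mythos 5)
Your overall reduction is the same one the paper uses: the paper settles this corollary in one line by observing that the hypotheses produce operators $R$ and $S$ satisfying the assumptions of Theorems \ref{thm:(Teorema 1111)}$(a)$ and \ref{thm:(Teorema 11)} with $\mathcal{I}=\mathcal{L}_{K}$ (as in \cite[Corollary 3.8]{SERGIO}), and your choices $G=\ell_{p}$, $R$ an into isomorphism (so $(R(e_{n}^{(p)}))$ is semi-normalized basic), and the verification $\mathcal{J}\subset\mathcal{L}_{K}\subset\mathcal{L}_{K}^{dual}$ are all correct. The genuine gap is the construction of $S$, which you yourself flag as ``the main obstacle'': it is never actually carried out. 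The phrases ``standard lifting argument'', ``quotient-type map'' and ``an operator onto a dense range where $p'$-summing behaviour is available'' do not amount to an argument, and the mechanisms they hint at fail: a quotient map from the copy of $\ell_{1}$ onto $\ell_{p}$ does not extend to all of $E$ (since $\ell_{p}$ is not injective, and in general $E$ need not admit any quotient isomorphic to $\ell_{p}$), while Hahn--Banach extensions of the coordinate functionals of the $\ell_{1}$-copy give no control on $\sum_{n}|x_{n}'(x)|^{p}$, so the map $x\mapsto(x_{n}'(x))_{n}$ need not take values in $\ell_{p}$, let alone boundedly.

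The missing ingredient is the classical theorem of Pe\l{}czy\'nski: if $\ell_{1}$ embeds in $E$, then $L_{1}[0,1]$ embeds in $E'$; since $2\leq p<\infty$ gives $1<p'\leq 2$, the space $\ell_{p'}$ embeds in $L_{1}[0,1]$ and hence in $E'$. Take $(x_{n}')\subset E'$ equivalent to the unit vector basis of $\ell_{p'}$. For each $x\in E$ the restriction of $x$ (viewed in $E''$) to $[x_{n}']\cong\ell_{p'}$ is a continuous functional on $\ell_{p'}$, so $\sum_{n}|x_{n}'(x)|^{p}\leq C\|x\|^{p}$; hence $Sx=(x_{n}'(x))_{n}$ defines a bounded operator $S\in\mathcal{L}(E;\ell_{p})$ with $S'(e_{n}^{(p')})=x_{n}'$. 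Since $p'>1$, this sequence is semi-normalized and weakly null, so no subsequence can have a norm-convergent subsequence (its limit would have to be $0$), i.e. $(S'(g'_{n_{k}}))\notin\mathcal{C}_{\mathcal{L}_{K}}(E')$ for every subsequence, which is precisely the hypothesis needed; this is also exactly where $p\geq 2$ enters, because for $p<2$ one would need $\ell_{p'}$ with $p'>2$ inside $L_{1}$, which is false. With this $S$ in hand, your citation of Theorems \ref{thm:(Teorema 1111)}$(a)$ and \ref{thm:(Teorema 11)} does give both $(a)$ and $(b)$, so the proposal is repairable, but as written the decisive step is absent.
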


When $\mathcal{J}=\mathcal{L}_{K}$, Corollary \ref{cor 35} correspond to \cite[Corollary 3.8]{SERGIO}. Since the hypothesis
of \cite[Corollary 3.8]{SERGIO} implies the hypothesis of Theorems \ref{thm:(Teorema 1111)} $(a)$ and \ref{thm:(Teorema 11)}, we see that Corollary \ref{cor 35} follow from Theorems \ref{thm:(Teorema 1111)} $(a)$ and \ref{thm:(Teorema 11)}.

The next Corollary is a generalization of \cite[Theorem 2]{EM} and \cite[Theorem 2]{KA}. The proof is based in ideas of \cite[Corollary 11]{LEWIS}.

\begin{corollary}\label{cor 36}
Let $E$ and $F$ be infinite dimensional Banach spaces. If $c_{0}\hookrightarrow\mathcal{J}(E;F)$, then $\mathcal{J}(E;F)$ is not complemented in $\mathcal{L}(E;F)$ for all closed operator ideal $\mathcal{J}\subset \mathcal{L}_{K}$.
\end{corollary}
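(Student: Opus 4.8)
The plan is to derive the statement from Theorem \ref{thm:(Teorema 10)}(a). Since the adjoint of a compact operator is compact (Schauder), $\mathcal{L}_{K}^{dual}=\mathcal{L}_{K}$; so, taking $\mathcal{I}:=\mathcal{L}_{K}$, the hypothesis $\mathcal{J}\subset\mathcal{I}^{dual}$ of that theorem holds because $\mathcal{J}\subset\mathcal{L}_{K}$, and $\mathcal{C}_{\mathcal{L}_{K}}(E^{\prime})$ is exactly the family of relatively compact subsets of $E^{\prime}$. Also $\mathcal{J}(E;F)=\mathcal{J}\circ\mathcal{L}(E;F)$, via the factorization $T=T\circ\mathrm{id}_{E}$. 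Since $c_{0}\hookrightarrow\mathcal{J}(E;F)\subset\mathcal{L}_{K}(E;F)$, it therefore suffices to produce a Banach space $G$ with an unconditional basis $(g_{n})$ and coordinate functionals $(g^{\prime}_{n})$, together with operators $R\in\mathcal{L}(G;F)$ and $S\in\mathcal{L}(E;G)$, such that $(R(g_{n}))$ is a semi-normalized basic sequence in $F$ and $(S^{\prime}(g^{\prime}_{n_{k}}))$ is not relatively compact in $E^{\prime}$ for every subsequence $(n_{k})$; this is the content of the argument of \cite[Corollary 11]{LEWIS}.

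If $c_{0}\hookrightarrow F$ there is nothing more to do: $E$ and $F$ then satisfy the hypotheses of Corollary \ref{cor 33}, which already gives (taking $n=1$) that $\mathcal{J}(E;F)$ is not complemented in $\mathcal{L}(E;F)$. So assume $c_{0}\not\hookrightarrow F$. By the Bessaga--Pe{\l}czy{\'n}ski theorem choose $(T_{k})\subset\mathcal{J}(E;F)$ equivalent to the unit vector basis of $c_{0}$; we may assume $\|T_{k}\|\le 1$ and, for some $\delta>0$, $\|T_{k}\|\ge\delta$ for all $k$, while $\sum_{k}T_{k}$ is weakly unconditionally Cauchy in $\mathcal{L}(E;F)$ but not norm-convergent. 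As the $T_{k}$ are compact, replace $F$ by the (separable) closed linear span of $\bigcup_{k}T_{k}(E)$, and fix $y^{\prime}_{k}\in B_{F^{\prime}}$, $x_{k}\in B_{E}$ with $\langle y^{\prime}_{k},T_{k}x_{k}\rangle\ge\delta/2$; then the vectors $T_{k}x_{k}\in F$ and the functionals $T^{\prime}_{k}y^{\prime}_{k}\in E^{\prime}$ are semi-normalized. The target is to pass to a subsequence along which $(f_{j}):=(T_{k_{j}}x_{k_{j}})$ is a semi-normalized basic sequence in $F$, to take $G$ (one natural choice being the completion of the finitely supported scalar sequences under $\|\sum_{j}a_{j}g_{j}\|_{G}:=\sup_{\varepsilon}\|\sum_{j}\varepsilon_{j}a_{j}f_{j}\|_{F}$, which makes $(g_{j})$ a $1$-unconditional basis), to set $R(g_{j}):=f_{j}$, and to let $Sx:=(\phi_{j}(x))_{j}$ for suitably chosen semi-normalized functionals $\phi_{j}\in E^{\prime}$ extracted from $(T^{\prime}_{k})$.

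The crux is that all of this must be done at once: one needs $(f_{j})$ semi-normalized basic in $F$, one needs $(\phi_{j})$ with no relatively compact subsequence in $E^{\prime}$ (this is where the infinite-dimensionality of $E$ enters), and one needs $S$ to land boundedly in $G$ --- equivalently, writing things out, $\sup_{\varepsilon,\,x\in B_{E}}\|\sum_{j}\varepsilon_{j}\phi_{j}(x)f_{j}\|_{F}<\infty$ --- which is a genuine compatibility between $(f_{j})$ and $(\phi_{j})$. Arranging these simultaneously is carried out by a careful subsequence and gliding-hump selection from the copy of $c_{0}$, in which the compactness of the $T_{k}$ is used essentially (without it a copy of $c_{0}$ in $\mathcal{L}_{K}(E;F)$ can be degenerate on one side --- e.g.\ in $\mathcal{L}_{K}(c_{0};c_{0})$ the operators $x\mapsto x_{1}e_{k}$ --- and then carries no usable data there); this is precisely the part performed in \cite[Corollary 11]{LEWIS}, and it is the main obstacle. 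Once $G$, $R$, $S$ are in hand, Theorem \ref{thm:(Teorema 10)}(a) yields that $\mathcal{J}(E;F)$ is not complemented in $\mathcal{L}(E;F)$, as claimed.
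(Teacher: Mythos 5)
There is a genuine gap: your proof reduces everything to Theorem \ref{thm:(Teorema 10)}(a), but the crucial step of that reduction --- producing a space $G$ with unconditional basis and operators $R\in\mathcal{L}(G;F)$, $S\in\mathcal{L}(E;G)$ with $(R(g_{n}))$ semi-normalized basic and $(S^{\prime}(g^{\prime}_{n_{k}}))$ never relatively compact, starting from nothing more than a copy of $c_{0}$ inside $\mathcal{J}(E;F)$ --- is never carried out. You yourself identify the simultaneous requirements (basic sequence on the $F$ side, no relatively compact subsequence on the $E^{\prime}$ side, boundedness of $S$ into $G$) as ``the main obstacle'' and then defer it to \cite[Corollary 11]{LEWIS}; but that reference (and the paper, which follows it) does not perform any such construction. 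It proves the statement by an entirely different route: reduce via Corollaries \ref{cor 33} and \ref{cor 34} to the case where $F$ contains no copy of $c_{0}$ and $E$ no complemented copy of $\ell_{1}$, invoke \cite[Theorem 4]{KALTON} to rule out $\ell_{\infty}\hookrightarrow\mathcal{J}(E;F)$, use $c_{0}\not\hookrightarrow F$ to get unconditional SOT-convergence of $\sum T_{n}(x)$, and then apply the Diestel--Faires theorem to the vector measure $\pi\circ\mu$, $\mu(A)(x)=\sum_{n\in A}T_{n}(x)$, to reach a contradiction. So the citation does not fill the hole you left.

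Moreover, it is doubtful that the hole can be filled along your lines under the corollary's hypotheses alone. Every $T_{k}$ is compact, so each $T_{k}^{\prime}(B_{F^{\prime}})$ is relatively compact and the functionals $\phi_{j}=T_{k_{j}}^{\prime}y_{k_{j}}^{\prime}$ carry no non-compactness individually; your own example $x\mapsto x_{1}e_{k}$ shows the extracted data can be degenerate, and after discarding the case $c_{0}\hookrightarrow F$ the hypotheses provide no other non-compact operator from $E$ factoring through a space with an unconditional basis. Indeed, if $E$ were a space on which every operator into a Banach space with an unconditional basis is compact, then for any choice of $S$ the sequence $(S^{\prime}(g^{\prime}_{n}))$ would be relatively compact and Theorem \ref{thm:(Teorema 10)}(a) with $\mathcal{I}=\mathcal{L}_{K}$ would be inapplicable, while the corollary would still have to be proved. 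The gliding-hump selection you gesture at is therefore not a technical detail but the whole problem, and the measure-theoretic argument of the paper exists precisely to avoid it. (Your opening reductions --- $\mathcal{J}\subset\mathcal{L}_{K}=\mathcal{L}_{K}^{dual}$ and the disposal of the case $c_{0}\hookrightarrow F$ via Corollary \ref{cor 33} --- are fine; it is everything after that which remains unproved.)
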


\begin{proof}
 By Corollaries \ref{cor 33} and \ref{cor 34} we may suppose without loss of generality that $F$ contains no copy of $c_{0}$ and $E$ contains no complemented copy of $\ell_{1}$. Thus, by \cite[Theorem 4]{KALTON} $\mathcal{J}(E;F)$ contains no copy of $\ell_{\infty}$. If $T_{n}$ is a copy of the unit vector basis of $c_{0}$ in $\mathcal{J}(E;F)$. Then $\displaystyle\sum_{n=1}^{\infty}T_{n}(x)$ converges unconditionally for each $x\in E$. Define $\mu:\wp(\mathbb{N})\rightarrow \mathcal{J} (E; F)$ by $\mu(A)(x)=\displaystyle\sum_{n\in A}T_{n}(x)$, $x\in E$, $A\subset \mathbb{N}$. Suppose there is a projection $\pi:\mathcal{L}(E;F)\rightarrow \mathcal{J}(E;F)$. Then $\pi(T_{n})=T_{n}$ for every $n\in \mathbb{N}$. If $(\|T_{n}\|)$ does not converge to zero, we can apply the Diestel-Faires Theorem \cite[p. 20, Theorem 2]{DIESTEL} to the measure $\pi\circ \mu$ and obtain $\ell_{\infty}\hookrightarrow \mathcal{J}(E;F)$. Therefore $\|T_{n}\|\rightarrow 0$, but this is absurd too, because $(T_{n})$ is a copy of unit vectors. This complete the proof.
\end{proof}

\begin{proposition}\label{cor 37}
Let $E$ and $F$ be infinite dimensional Banach spaces. If $c_{0}\hookrightarrow\mathcal{J}\circ \mathcal{P}(^{n}E;F)$, then $\mathcal{J}\circ \mathcal{P}(^{n}E;F)$ is not complemented in $\mathcal{P}(^{n}E;F)$ for every $n\in\mathbb{N}$, and every closed ideal $\mathcal{J}\subset \mathcal{L}_{K}$.
\end{proposition}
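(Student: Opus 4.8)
The plan is to reduce the assertion to its linear instance $n=1$, which is exactly Corollary~\ref{cor 36}, by transporting everything to the symmetric projective tensor product, precisely as in the proof of Theorem~\ref{thm:(Teorema 1111)}. For $n=1$ we have $\mathcal{P}(^{1}E;F)=\mathcal{L}(E;F)$ and, since $\mathcal{J}$ is an operator ideal, $\mathcal{J}\circ\mathcal{P}(^{1}E;F)=\mathcal{J}(E;F)$; thus the hypothesis becomes $c_{0}\hookrightarrow\mathcal{J}(E;F)$ and the conclusion is Corollary~\ref{cor 36}.

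Now fix $n>1$ and suppose, towards a contradiction, that there is a projection $\pi\colon\mathcal{P}(^{n}E;F)\to\mathcal{J}\circ\mathcal{P}(^{n}E;F)$. By Ryan's theorem \cite{RYAN} the linearization $P\mapsto\check{P}$ is an isomorphism of $\mathcal{P}(^{n}E;F)$ onto $\mathcal{L}(\hat{\otimes}_{n,s,\pi}E;F)$, and by \cite[Proposition 3.2]{PELLEGRINO} it carries $\mathcal{J}\circ\mathcal{P}(^{n}E;F)$ onto $\mathcal{J}(\hat{\otimes}_{n,s,\pi}E;F)$. Writing $W=\hat{\otimes}_{n,s,\pi}E$, this isomorphism conjugates $\pi$ to a projection $\rho\colon\mathcal{L}(W;F)\to\mathcal{J}(W;F)$, and it carries the given copy of $c_{0}$ inside $\mathcal{J}\circ\mathcal{P}(^{n}E;F)$ to a copy of $c_{0}$ inside $\mathcal{J}(W;F)$. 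Since $E$ is infinite dimensional, so is $W$; for instance $W$ contains a complemented copy of $E$ by \cite[Theorem 3]{BLASCO}. Hence $W$ and $F$ are infinite dimensional with $c_{0}\hookrightarrow\mathcal{J}(W;F)$, so Corollary~\ref{cor 36} applied to the pair $(W,F)$ asserts that $\mathcal{J}(W;F)$ is \emph{not} complemented in $\mathcal{L}(W;F)$, contradicting the existence of $\rho$. Therefore no projection $\pi$ exists, and $\mathcal{J}\circ\mathcal{P}(^{n}E;F)$ is not complemented in $\mathcal{P}(^{n}E;F)$.

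The argument has essentially no obstacle beyond invoking the right auxiliary facts: the two points that must be checked are that the linearization restricts to an isomorphism between $\mathcal{J}\circ\mathcal{P}(^{n}E;F)$ and $\mathcal{J}(W;F)$ for an arbitrary closed operator ideal $\mathcal{J}$ (which is \cite[Proposition 3.2]{PELLEGRINO}), and that an isomorphism carrying one subspace exactly onto another conjugates projections to projections. One could alternatively try to imitate the proof of Corollary~\ref{cor 36} directly --- using Corollaries~\ref{cor 33} and \ref{cor 34} to reduce to the case where $F$ contains no copy of $c_{0}$ and $E$ contains no complemented copy of $\ell_{1}$, and then ruling out $\ell_{\infty}\hookrightarrow\mathcal{J}\circ\mathcal{P}(^{n}E;F)$ via the Diestel--Faires theorem --- but this would require a polynomial-ideal version of \cite[Theorem 4]{KALTON}, which is not available here; the tensor reduction sidesteps that difficulty.
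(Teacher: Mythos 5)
Your proof is correct and takes essentially the same route as the paper: the paper's own (very terse) proof also transports the problem to $\mathcal{L}(\hat{\otimes}_{n,s,\pi}E;F)$ via \cite[Proposition 3.2]{PELLEGRINO} together with Ryan's linearization and then invokes Corollary \ref{cor 36}. Your extra verifications --- that the isomorphism carries the copy of $c_{0}$ and conjugates projections, and that $\hat{\otimes}_{n,s,\pi}E$ is infinite dimensional --- simply make explicit what the paper leaves implicit.
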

\begin{proof}
By \cite[Proposition 3.2]{PELLEGRINO} we have that $\mathcal{J}\circ \mathcal{P}(^{n}E;F)$ is isomorphic to $\mathcal{J}(\hat{\otimes}_{n,s,\pi}E; F)$. Thus
the result follows from Corollary \ref{cor 36}.
\end{proof}

The proof of \cite[Lemma 2]{KALTON} can be applied to get the following lemma.

\begin{lemma}\label{cor 3900}
Assume $E$ is separable, $\widehat{(\mathcal{J}\circ\mathcal{L})^{fac}}(^{k}E;F)$ is complemented in $\mathcal{P}(^{k}E;F)$, and an linear bounded operator
$\phi:\ell_{\infty}\rightarrow \mathcal{P}(^{k}E;F)$ is given with the following properties:
\begin{enumerate}
\item [(a)] $\phi(e_{n})\in \widehat{(\mathcal{J}\circ\mathcal{L})^{fac}}(^{k}E;F)$ for all $n\in \mathbb{N}$.
\item [(b)] $\{\phi(\xi)(x): \  \ \xi\in \ell_{\infty}, x\in E\}\subset F $ is separable.
\end{enumerate}

Then, for every infinite subset $M\subset \mathbb{N}$, there exists an infinite subset $M_{0}\subset M$ with $\phi(\xi)\in \widehat{(\mathcal{J}\circ\mathcal{L})^{fac}}(^{k}E;F)$ for all $\xi\in\ell_{\infty}(M_{0})$.
\end{lemma}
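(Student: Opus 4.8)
The plan is to transcribe the proof of \cite[Lemma 2]{KALTON}, replacing $\mathcal{L}_{K}(E;F)$ throughout by $\widehat{(\mathcal{J}\circ\mathcal{L})^{fac}}(^{k}E;F)$ and $\mathcal{L}(E;F)$ by $\mathcal{P}(^{k}E;F)$, and checking that the latter polynomial ideal has the features of $\mathcal{L}_{K}(E;F)$ that are used there. Fix a projection $\pi\colon\mathcal{P}(^{k}E;F)\to\widehat{(\mathcal{J}\circ\mathcal{L})^{fac}}(^{k}E;F)$ and set $\psi=(I-\pi)\circ\phi\colon\ell_{\infty}\to\mathcal{P}(^{k}E;F)$, where $I$ is the identity of $\mathcal{P}(^{k}E;F)$. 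By (a), $\psi(e_{n})=0$ for all $n$, so $\psi$ annihilates the finitely supported sequences and, by continuity, $c_{0}$. Since $\psi$ has range in $\ker\pi$ and $\ker\pi\cap\widehat{(\mathcal{J}\circ\mathcal{L})^{fac}}(^{k}E;F)=\{0\}$, for every $\xi\in\ell_{\infty}$ one has $\phi(\xi)\in\widehat{(\mathcal{J}\circ\mathcal{L})^{fac}}(^{k}E;F)$ if and only if $\psi(\xi)=0$; hence it suffices to produce, for the given $M$, an infinite $M_{0}\subset M$ with $\psi|_{\ell_{\infty}(M_{0})}=0$. Passing to the finitely additive measure $\nu\colon\wp(\mathbb{N})\to\mathcal{P}(^{k}E;F)$, $\nu(A)=\psi(\chi_{A})$, we get $\nu(\{n\})=0$, and, the simple functions being norm dense in $\ell_{\infty}(M_{0})$, the identity $\psi|_{\ell_{\infty}(M_{0})}=0$ is equivalent to the semivariation of $\nu$ vanishing on $\wp(M_{0})$.

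Now I would argue by contradiction as in \cite[Lemma 2]{KALTON}: assume that for the given $M$ the semivariation of $\nu$ is positive on $\wp(N)$ for every infinite $N\subset M$. This is where hypothesis (b) and the separability of $E$ enter: choosing a separable closed subspace $F_{0}\subset F$ containing all the vectors $\phi(\xi)(x)$, a countable dense set $(x_{i})$ in $B_{E}$, and a countable set $(f^{\prime}_{j})\subset B_{F^{\prime}}$ whose restrictions to $F_{0}$ are norming, Kalton's sliding hump argument, fed through Rosenthal's lemma applied to the scalar measures $A\mapsto f^{\prime}_{j}\bigl(\phi(\chi_{A})(x_{i})\bigr)$, yields pairwise disjoint infinite sets $A_{1},A_{2},\dots\subset M$ and $\delta>0$ for which the operator $\Phi\colon\ell_{\infty}\to\mathcal{P}(^{k}E;F)$, $\Phi(\eta)=\phi\bigl(\sum_{l}\eta_{l}\chi_{A_{l}}\bigr)$, is an isomorphic embedding, and whose canonical $c_{0}$-part, combined with $\pi$ and the assumed complementation of $\widehat{(\mathcal{J}\circ\mathcal{L})^{fac}}(^{k}E;F)$, produces a copy of $c_{0}$ complemented in a copy of $\ell_{\infty}$ --- the contradiction of Kalton's proof. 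Hence no such $M$ exists, and the lemma follows.

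The easy verifications --- that $\widehat{(\mathcal{J}\circ\mathcal{L})^{fac}}(^{k}E;F)$ is a closed linear subspace of $\mathcal{P}(^{k}E;F)$ containing the finite type polynomials, hence admits a topological complement $\ker\pi$ --- are immediate from $\mathcal{J}$ being a closed operator ideal together with the construction of $(\cdot)^{fac}$ in \cite{GERALDO}. The step I expect to be the main obstacle is making the passage to the separable target $F_{0}$ rigorous, i.e. establishing that a polynomial in $\widehat{(\mathcal{J}\circ\mathcal{L})^{fac}}(^{k}E;F)$ with range inside $F_{0}$ already lies in $\widehat{(\mathcal{J}\circ\mathcal{L})^{fac}}(^{k}E;F_{0})$ and that this ideal remains complemented in $\mathcal{P}(^{k}E;F_{0})$, so that Kalton's measure estimates can be carried out in a genuinely separable setting. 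Co-injectivity of $\widehat{(\mathcal{J}\circ\mathcal{L})^{fac}}$ should follow by co-restricting the outer multilinear factor in the factorization defining $(\mathcal{J}\circ\mathcal{L})^{fac}$, but transferring the complementing projection to $\mathcal{P}(^{k}E;F_{0})$ is the delicate point; failing that, one runs Rosenthal's lemma directly against $A\mapsto f^{\prime}_{j}\bigl(\psi(\chi_{A})(x_{i})\bigr)$ without shrinking $F$, at the cost of having to control the term $\pi\bigl(\phi(\chi_{A})\bigr)(x_{i})$, which is the real difficulty.
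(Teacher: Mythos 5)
Your overall route is the paper's own: the paper offers no argument for this lemma beyond the sentence preceding it (``the proof of \cite[Lemma 2]{KALTON} can be applied''), so a faithful transcription of Kalton's proof is exactly what is intended, and your preliminary reduction is the right one --- with a projection $\pi$ onto $\widehat{(\mathcal{J}\circ\mathcal{L})^{fac}}(^{k}E;F)$, membership $\phi(\xi)\in \widehat{(\mathcal{J}\circ\mathcal{L})^{fac}}(^{k}E;F)$ is equivalent to $(I-\pi)\phi(\xi)=0$, and by density of the simple functions in $\ell_{\infty}(M_{0})$ together with closedness of the ideal component it suffices to make the measure $A\mapsto (I-\pi)\phi(\chi_{A})$ vanish on $\wp(M_{0})$. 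The properties of $\widehat{(\mathcal{J}\circ\mathcal{L})^{fac}}(^{k}E;F)$ that such a transcription actually consumes are the ones you list (closed linear subspace of $\mathcal{P}(^{k}E;F)$, with hypothesis (a) supplying the vanishing on singletons), and those verifications are unproblematic.

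However, as written your text is a plan rather than a proof, and you say so yourself: the entire content of Kalton's lemma --- how the separability of $E$ and hypothesis (b) are brought to bear after the abstract projection $\pi$ has been composed in, so that $(I-\pi)\phi$ is forced to vanish on some $\ell_{\infty}(M_{0})$ --- is left as a black box (``Kalton's sliding hump argument, fed through Rosenthal's lemma'') and is then flagged as possibly failing, with two alternative repairs (co-restricting to a separable $F_{0}$ and transporting $\pi$, or controlling the terms $\pi(\phi(\chi_{A}))(x_{i})$ directly), neither of which you carry out. Note that this difficulty is not created by the polynomial setting: in Kalton's linear lemma the projection onto $\mathcal{L}_{K}(E;F)$ is just as arbitrary, and its values are just as free to leave the separable subspace generated by $\{\phi(\xi)(x)\}$, so whatever device Kalton uses to reconcile (b) with the projection transfers verbatim provided it uses nothing about $\mathcal{L}_{K}(E;F)$ beyond its being a closed subspace containing the $\phi(e_{n})$. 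Checking that his argument indeed uses nothing more is precisely the step you have not done --- it is the one thing the paper's one-line proof implicitly asserts --- and your reconstruction of his endgame (a copy of $c_{0}$ complemented in a copy of $\ell_{\infty}$) is offered without justification. Until that interior argument is quoted and verified, or reproved, the proposal has a genuine gap, albeit the same gap the paper leaves to the reader.
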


\begin{lemma}\label{cor 3901}
Suppose $E$ contains a complemented copy of $\ell_{1}$. Then $\widehat{(\mathcal{J}\circ\mathcal{L})^{fac}}(^{n}E;F)$ is not complemented in
$\mathcal{P}(^{n}E;F)$ for every $F$ and $n>1$, where $\mathcal{J}$ is a closed operator ideal $\mathcal{J}\subset \mathcal{L}_{K}$.
\end{lemma}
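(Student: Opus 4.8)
The plan is to reduce the statement to polynomials on $\ell_{1}$ and then apply the Kalton-type Lemma \ref{cor 3900} to the diagonal polynomial $x\mapsto\sum_{k}x_{k}^{n}$, which fails to be weakly continuous on bounded subsets of $\ell_{1}$ precisely because $n>1$. We may assume $F\neq\{0\}$. Since $\ell_{1}$ is complemented in $E$, fix $q\in\mathcal{L}(E;\ell_{1})$ and $j\in\mathcal{L}(\ell_{1};E)$ with $q\circ j=I_{\ell_{1}}$; then $\Phi(P)=P\circ q$ and $\Psi(R)=R\circ j$ are bounded operators between $\mathcal{P}(^{n}\ell_{1};F)$ and $\mathcal{P}(^{n}E;F)$ with $\Psi\circ\Phi=I$, and by the ideal property they carry $\widehat{(\mathcal{J}\circ\mathcal{L})^{fac}}$ into $\widehat{(\mathcal{J}\circ\mathcal{L})^{fac}}$. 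Hence a projection $\pi$ of $\mathcal{P}(^{n}E;F)$ onto $\widehat{(\mathcal{J}\circ\mathcal{L})^{fac}}(^{n}E;F)$ would yield the projection $\Psi\circ\pi\circ\Phi$ of $\mathcal{P}(^{n}\ell_{1};F)$ onto $\widehat{(\mathcal{J}\circ\mathcal{L})^{fac}}(^{n}\ell_{1};F)$, so it suffices to prove that the latter space is not complemented in $\mathcal{P}(^{n}\ell_{1};F)$ for $n>1$ (for $F$ infinite dimensional this is already Corollary \ref{cor 34}(b); the argument below treats every $F$ uniformly).

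So suppose, for a contradiction, that $\widehat{(\mathcal{J}\circ\mathcal{L})^{fac}}(^{n}\ell_{1};F)$ is complemented in $\mathcal{P}(^{n}\ell_{1};F)$. Fix a unit vector $f_{0}\in F$ and $\psi_{0}\in F'$ with $\psi_{0}(f_{0})=1$, and define $\phi\colon\ell_{\infty}\to\mathcal{P}(^{n}\ell_{1};F)$ by $\phi(\xi)(x)=\bigl(\sum_{k}\xi_{k}x_{k}^{n}\bigr)f_{0}$. From $\sum_{k}|\xi_{k}||x_{k}|^{n}\le\|\xi\|_{\infty}\|x\|_{1}^{n}$ it follows that each $\phi(\xi)$ is a continuous $n$-homogeneous polynomial and that $\phi$ is bounded and linear; moreover $\phi(e_{k})(x)=x_{k}^{n}f_{0}$ is of finite type, hence lies in $\widehat{(\mathcal{J}\circ\mathcal{L})^{fac}}(^{n}\ell_{1};F)$, and $\{\phi(\xi)(x):\xi\in\ell_{\infty},\,x\in\ell_{1}\}\subseteq\mathbb{K}f_{0}$ is separable. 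Since $\ell_{1}$ is separable, Lemma \ref{cor 3900} (with $k=n$ and $M=\mathbb{N}$) provides an infinite $M_{0}\subseteq\mathbb{N}$ such that $\phi(\mathbf{1}_{M_{0}})\in\widehat{(\mathcal{J}\circ\mathcal{L})^{fac}}(^{n}\ell_{1};F)$. Because $\mathcal{J}\subseteq\mathcal{L}_{K}$ we have $(\mathcal{J}\circ\mathcal{L})^{fac}\subseteq\mathcal{L}_{wb}$, so $\widehat{(\mathcal{J}\circ\mathcal{L})^{fac}}(^{n}\ell_{1};F)\subseteq\mathcal{P}_{w}(^{n}\ell_{1};F)$; composing with $\psi_{0}$ and using that $\mathcal{P}_{w}$ is a polynomial ideal, the scalar polynomial $P_{0}(x)=\sum_{k\in M_{0}}x_{k}^{n}$ must belong to $\mathcal{P}_{w}(^{n}\ell_{1})$.

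The final and main step is to see that this is impossible, i.e. $P_{0}\notin\mathcal{P}_{w}(^{n}\ell_{1})$. Restricting to the closed span of $\{e_{k}:k\in M_{0}\}$, which is isometric to $\ell_{1}$, and using once more that $\mathcal{P}_{w}$ is a polynomial ideal, it is enough to show that $x\mapsto\sum_{k}x_{k}^{n}$ is not weakly continuous at $0$ on $B_{\ell_{1}}$. Let $U=\{x:|\varphi_{l}(x)|<\varepsilon,\ 1\le l\le m\}$ be a basic weak neighbourhood of $0$ with $\varphi_{l}\in B_{\ell_{\infty}}$; the points $v_{k}=(\varphi_{1}(e_{k}),\dots,\varphi_{m}(e_{k}))$ lie in the totally bounded set $[-1,1]^{m}$, so some ball of radius $\varepsilon/4$ contains $v_{k}$ for infinitely many $k$. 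Picking distinct indices $j_{1},j_{2}$ (respectively $j_{0},j_{1},j_{2}$) among those, set $x=\tfrac12(e_{j_{1}}-e_{j_{2}})$ if $n$ is even and $x=\tfrac12e_{j_{0}}-\tfrac14e_{j_{1}}-\tfrac14e_{j_{2}}$ if $n$ is odd (so $n\ge3$); then $x\in B_{\ell_{1}}\cap U$, while $\sum_{k}x_{k}^{n}$ equals $2^{1-n}$ in the even case and $2^{-n}(1-2^{1-n})$ in the odd case, a constant bounded away from $0$ depending only on $n$. This contradicts weak continuity of the polynomial at $0$ and completes the proof. The obstacle is exactly this last step — locating, inside every weak neighbourhood of the origin of $B_{\ell_{1}}$, a point at which the diagonal polynomial stays large — and it is where the hypothesis $n>1$ is essential: when $n=1$ the functional $\sum_{k}x_{k}$ does belong to $\mathcal{P}_{w}(^{1}\ell_{1})=(\ell_{1})'$, so the statement genuinely fails.
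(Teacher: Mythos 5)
Your proposal is correct and follows essentially the same route as the paper: reduce to $E=\ell_{1}$, apply Lemma \ref{cor 3900} to the diagonal map $\phi(\xi)(x)=\bigl(\sum_{k}\xi_{k}x_{k}^{n}\bigr)f_{0}$, and contradict the fact that $\phi(\xi)$ lies in $\mathcal{P}_{w}(^{n}\ell_{1};F)$ only for $\xi\in c_{0}$. The only difference is that you prove inline (the complemented-copy reduction and the weak discontinuity of the diagonal polynomial at $0$) what the paper simply cites from \cite[Lemma 5]{GONZA}, and those inline arguments are sound.
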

\begin{proof}
As in \cite[Lemma 5]{GONZA}, we can reduce the problem to the case $E=\ell_{1}$.

Fix $v\in B_{F}$ and define the operator $\phi:\ell_{\infty}\rightarrow \mathcal{P}(^{k}\ell_{1};F)$ by $\phi(\xi)(x)=\displaystyle\sum_{i=1}^{\infty}\xi_{i}x_{i}^{k}v$ for $\xi=(\xi_{i})\in \ell_{\infty}$, $x=(x_{i})\in \ell_{1}$ and $k>1$.
Note that $\phi(e_{n})$ are polynomials of finite type, so $\phi(e_{n})\in \widehat{(\mathcal{J}\circ\mathcal{L})^{fac}}(^{k}\ell_{1};F)$ for all $n\in\mathbb{N}$. Suppose there exists a projection $\pi:\mathcal{P}(^{k}\ell_{1};F)\rightarrow \widehat{(\mathcal{J}\circ\mathcal{L})^{fac}}(^{k}\ell_{1};F)$ for any  $k>1$. Using Lemma
\ref{cor 3900}, there is an infinite subset $M\subset \mathbb{N}$ such that $\phi(\xi)\in \widehat{(\mathcal{J}\circ\mathcal{L})^{fac}}(^{k}\ell_{1};F)$ for all $\xi\in\ell_{\infty}(M)$. But it is absurd because $\phi(\xi)\in \mathcal{P}_{wb}(^{k}\ell_{1};F)$ if and only if $\xi\in c_{0}$ (to see \cite[Lemma 5]{GONZA}). Therefore $\widehat{(\mathcal{J}\circ\mathcal{L})^{fac}}(^{n}E;F)$ is not complemented in
$\mathcal{P}(^{n}E;F)$ for every $F$ and $n>1$.
\end{proof}

\begin{proposition}\label{cor 38}
Let $E$  be an infinite dimensional Banach space and $n>1$. If $c_{0}\hookrightarrow\widehat{(\mathcal{J}\circ\mathcal{L})^{fac}}(^{n}E;F)$, then $\widehat{(\mathcal{J}\circ\mathcal{L})^{fac}}(^{n}E;F)$ is not complemented in $\mathcal{P}(^{n}E;F)$ for every closed operator ideal $\mathcal{J}\subset \mathcal{L}_{K}$.
\end{proposition}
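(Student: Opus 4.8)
The plan is to repeat, in the present ideal–polynomial setting, the argument used for $n=1$ in Corollary~\ref{cor 36} and for $\mathcal{J}=\mathcal{L}_{K}$ in Theorem~\ref{cor angiev}, with Lemma~\ref{cor 3900} and Lemma~\ref{cor 3901} playing the roles of Kalton's Lemma~2 and of \cite[Lemma~5]{GONZA}. So I argue by contradiction: suppose there is a projection $\pi\colon\mathcal{P}(^{n}E;F)\to\widehat{(\mathcal{J}\circ\mathcal{L})^{fac}}(^{n}E;F)$. First I reduce the ambient spaces. If $c_{0}\hookrightarrow F$, then $\widehat{(\mathcal{J}\circ\mathcal{L})^{fac}}(^{n}E;F)$ is already uncomplemented by Corollary~\ref{cor 33}(b), so I may assume $F$ has no copy of $c_{0}$; if $E$ has a complemented copy of $\ell_{1}$, the conclusion already holds by Lemma~\ref{cor 3901} (this is where $n>1$ enters), so I may assume $E$ has no complemented copy of $\ell_{1}$. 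Finally, since $\mathcal{J}\subset\mathcal{L}_{K}$ gives $\mathcal{J}\circ\mathcal{L}\subset\mathcal{K}\circ\mathcal{L}$ and hence $(\mathcal{J}\circ\mathcal{L})^{fac}\subset\mathcal{L}_{K}^{fac}=\mathcal{L}_{wb}$, we have the inclusion $\widehat{(\mathcal{J}\circ\mathcal{L})^{fac}}(^{n}E;F)\subset\mathcal{P}_{w}(^{n}E;F)$, which I will use below.

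Pick a semi-normalized sequence $(P_{k})$ in $\widehat{(\mathcal{J}\circ\mathcal{L})^{fac}}(^{n}E;F)$ equivalent to the canonical basis of $c_{0}$. Being $c_{0}$-equivalent, $\sum_{k}P_{k}$ is weakly unconditionally Cauchy in $\mathcal{P}(^{n}E;F)$; testing against the functionals $P\mapsto f(P(x))$, with $x\in E$ and $f\in F^{\prime}$, shows that $\sum_{k}P_{k}(x)$ is weakly unconditionally Cauchy in $F$ for every $x$, and since $F$ has no copy of $c_{0}$ each of these series converges unconditionally. Hence $\mu(A)(x):=\sum_{k\in A}P_{k}(x)$ is well defined for all $A\subseteq\mathbb{N}$ and $x\in E$; the relevant partial sums being equibounded on bounded subsets of $E$, each $\mu(A)$ is again an $n$-homogeneous polynomial, so $\mu\colon\wp(\mathbb{N})\to\mathcal{P}(^{n}E;F)$ is a finitely additive vector measure with $\mu(\{k\})=P_{k}$. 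Now consider $\pi\circ\mu\colon\wp(\mathbb{N})\to\widehat{(\mathcal{J}\circ\mathcal{L})^{fac}}(^{n}E;F)$: it is finitely additive and $\pi\circ\mu(\{k\})=\pi(P_{k})=P_{k}$, so if it were strongly additive then $\sum_{k}P_{k}$ would converge in norm and $\|P_{k}\|\to 0$, against semi-normalization. Thus $\pi\circ\mu$ is not strongly additive and, by the Diestel--Faires theorem \cite[p.~20, Theorem~2]{DIESTEL}, $\widehat{(\mathcal{J}\circ\mathcal{L})^{fac}}(^{n}E;F)$ contains a copy of $\ell_{\infty}$.

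The heart of the matter — and the step I expect to be the hardest — is to contradict this last conclusion: one must show that, under the reductions already made ($F$ without $c_{0}$, $E$ without a complemented $\ell_{1}$, $n>1$, $\mathcal{J}\subset\mathcal{L}_{K}$ closed), the space $\widehat{(\mathcal{J}\circ\mathcal{L})^{fac}}(^{n}E;F)$, which lies inside $\mathcal{P}_{w}(^{n}E;F)$, contains no copy of $\ell_{\infty}$. This is the ideal–polynomial counterpart of \cite[Theorem~4]{KALTON}, which is precisely what furnishes the contradiction for $n=1$ in Corollary~\ref{cor 36}, and of the analogous step for $\mathcal{J}=\mathcal{L}_{K}$ in \cite{SERGIO} (Theorem~\ref{cor angiev}). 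I would establish it by transplanting Kalton's argument into $\mathcal{P}_{w}(^{n}E;F)$, using that every polynomial there has relatively compact (hence separable) range together with Lemma~\ref{cor 3900} in the role of Kalton's Lemma~2, so that a copy of $\ell_{\infty}$ in $\widehat{(\mathcal{J}\circ\mathcal{L})^{fac}}(^{n}E;F)$ would, after the bookkeeping furnished by Corollaries~\ref{cor 33}(b), \ref{cor 34}(b) and Lemma~\ref{cor 3901}, force $c_{0}\hookrightarrow F$ or a complemented copy of $\ell_{1}$ in $E$. Once this no-$\ell_{\infty}$ statement is in place, the second paragraph closes the proof.
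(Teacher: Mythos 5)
Your overall strategy coincides with the paper's: reduce via Corollary~\ref{cor 33} and Lemma~\ref{cor 3901} to the case where $F$ contains no copy of $c_{0}$ and $E$ no complemented copy of $\ell_{1}$, then build the finitely additive measure $\mu(A)(x)=\sum_{i\in A}P_{i}(x)$ from a $c_{0}$-basis $(P_{i})$, compose with the hypothetical projection $\pi$, and use Diestel--Faires to produce a copy of $\ell_{\infty}$ inside $\widehat{(\mathcal{J}\circ\mathcal{L})^{fac}}(^{n}E;F)$, which must then be contradicted. That second paragraph of yours is essentially the paper's argument.

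The gap is in what you yourself call the heart of the matter: you never actually prove that, after the reductions, $\widehat{(\mathcal{J}\circ\mathcal{L})^{fac}}(^{n}E;F)$ contains no copy of $\ell_{\infty}$; you only announce a plan to ``transplant Kalton's argument'' using Lemma~\ref{cor 3900}, and the bookkeeping you invoke does not deliver it. Corollaries~\ref{cor 33}(b), \ref{cor 34}(b) and Lemma~\ref{cor 3901} are non-complementation statements under hypotheses on $E$ and $F$; they do not say that a copy of $\ell_{\infty}$ in the ideal space \emph{forces} $c_{0}\hookrightarrow F$ or a complemented $\ell_{1}$ in $E$, so they cannot close the contradiction. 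What is needed is a structural theorem of Kalton type for the polynomial setting, and the paper obtains it by citing \cite[Theorem 3]{GONZA}: since $\mathcal{J}\subset\mathcal{L}_{K}$ gives $\widehat{(\mathcal{J}\circ\mathcal{L})^{fac}}(^{n}E;F)\subset\mathcal{P}_{w}(^{n}E;F)$ (exactly the inclusion you noted), and since containing no copy of $\ell_{\infty}$ passes to subspaces, the González--Gutiérrez result applied under the standing assumptions (no $c_{0}$ in $F$, no complemented $\ell_{1}$ in $E$) yields that $\widehat{(\mathcal{J}\circ\mathcal{L})^{fac}}(^{n}E;F)$ has no copy of $\ell_{\infty}$, which is precisely the contradiction with the Diestel--Faires output. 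Replacing your sketch by this citation (or by an actual worked-out Kalton-type argument, which would be a nontrivial piece of work in itself and is what Lemma~\ref{cor 3900} is used for elsewhere, namely in Lemma~\ref{cor 3901}) is what completes the proof.
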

\begin{proof}
By Corollary \ref{cor 33} and Lemma \ref{cor 3901} we way suppose that $F$  contains no copy of $c_{0}$ and $E$ contains no complemented copy of $\ell_{1}$. By \cite[Theorem 3]{GONZA} $\widehat{(\mathcal{J}\circ\mathcal{L})^{fac}}(^{n}E;F)$ contains no copy of $\ell_{\infty}$. Let $(P_{i})$ be a copy of the unit vector basis $(e_{i})$ of $c_{0}$ in $\widehat{(\mathcal{J}\circ\mathcal{L})^{fac}}(^{n}E;F)$. Then $$\sup\bigg\{\bigg\|\sum_{i\in F}e_{i}\bigg\| ; F\subset\mathbb{N}, F finite\bigg\}=1.$$ By a result of Bessaga and Pelczynski \cite{BES} (see also \cite[p. 44, Theorem 6]{DIESTELL}) the series $\displaystyle\sum_{i=1}^{\infty} e_{i}$ is weakly unconditionally Cauchy in $c_{0}$. This implies that the series
 $\displaystyle\sum_{i=1}^{\infty} P_{i}$ is weakly unconditionally Cauchy in $\widehat{(\mathcal{J}\circ\mathcal{L})^{fac}}(^{n}E;F)$. For every $\varphi\in F^{\prime}$ and $x\in E$ we consider the continuous linear functional $$\psi: P\in\widehat{(\mathcal{J}\circ\mathcal{L})^{fac}}(^{n}E;F)\rightarrow \varphi(P(x))\in \mathbb{C}.$$ Since the series
 $\displaystyle\sum_{i=1}^{\infty} P_{i}$ is weakly unconditionally Cauchy in $\widehat{(\mathcal{J}\circ\mathcal{L})^{fac}}(^{n}E;F)$,
 $\displaystyle\sum_{i=1}^{\infty}|\psi( P_{i})|=\displaystyle\sum_{i=1}^{\infty}|\varphi( P_{i}(x))|<\infty$ for every $\varphi\in F^{\prime}$ and $x\in E$. This shows that
 $\displaystyle\sum_{i=1}^{\infty} P_{i}(x)$ is weakly unconditionally Cauchy in $F$ for each $x\in E$. Finally since $F$ contains no copy of $c_{0}$, an
 application of \cite[p. 45, Theorem 8 ]{DIESTELL} shows that $\displaystyle\sum_{i=1}^{\infty} P_{i}(x)$ converges unconditionally in $F$ for each $x\in E$. Let $\mu: \wp(\mathbb{N})\rightarrow \mathcal{P} (^{n}E; F)$ be the finitely additive vector measure defined by $\mu(A)(x)=\displaystyle\sum_{i\in A}P_{i}(x)$ for each $x\in E$ and $A\subset \mathbb{N}$.
Suppose there is a projection $\pi:\mathcal{P}(^{n}E; F)\rightarrow \widehat{(\mathcal{J}\circ\mathcal{L})^{fac}}(^{n}E;F)$ for any $n\in\mathbb{N}$. Then $\pi(P_{i})=P_{i}$ for each $i\in \mathbb{N}$.
 If the sequence $(\|P_{i}\|)$ does not converge to zero, then there is $\epsilon>0$ and a subsequence $(i_{k})$ of $\mathbb{N}$, such that $\|P_{i_{k}}\|>\epsilon$ for each $k\in \mathbb{N}$. But this implies that the measure $\pi\circ\mu:\wp(\mathbb{N})\rightarrow \widehat{(\mathcal{J}\circ\mathcal{L})^{fac}}(^{n}E;F)$ is not strongly additive. Then the Diestel-Faires Theorem \cite[p. 20, Theorem 2]{DIESTEL} would imply that $\widehat{(\mathcal{J}\circ\mathcal{L})^{fac}}(^{n}E;F)$ contains a copy of $\ell_{\infty}$. Therefore $\|P_{i}\|\rightarrow 0$, but this is absurd too, because $(P_{i})$ is a copy of $(e_{i})$. This complete the proof.

\end{proof}

When $\mathcal{J}=\mathcal{L}_{K}$ Proposition \ref{cor 38} gives \cite[Proposition 3.10]{SERGIO}.

\begin{theorem} \label{thm:(Teorema 1797)}
Let $E$ and $F$ be Banach spaces and $P\in \mathcal{P}(^{n}E; F)$ such that $P\notin \widehat{(\mathcal{J}\circ\mathcal{L})^{fac}}(^{n}E;F)$, where $\mathcal{J}\subset \mathcal{L}_{K}$ is a closed operator ideal. Suppose that $P$ admits a factorization $P=Q\circ T$ through a Banach space $G$ with an unconditional finite dimensional expansion of the identity, where $T\in \mathcal{L}(E;G)$ and $Q\in \mathcal{P}(^{n}G;F)$. Then $\widehat{(\mathcal{J}\circ\mathcal{L})^{fac}}(^{n}E;F)$ contains a copy of $c_{0}$ and thus $\widehat{(\mathcal{J}\circ\mathcal{L})^{fac}}(^{n}E;F)$ is not complemented in $\mathcal{P}(^{n}E;F)$.
\end{theorem}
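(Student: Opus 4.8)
I would prove this by adapting the argument of John \cite{KA} (and its polynomial refinements in \cite{SERGIO}, \cite{LEWIS}) to the ideal $\widehat{(\mathcal{J}\circ\mathcal{L})^{fac}}$. First, two reductions. By the ideal property, if $Q$ were in $\widehat{(\mathcal{J}\circ\mathcal{L})^{fac}}(^{n}G;F)$ then $P=Q\circ T$ would be in $\widehat{(\mathcal{J}\circ\mathcal{L})^{fac}}(^{n}E;F)$; hence $Q\notin\widehat{(\mathcal{J}\circ\mathcal{L})^{fac}}(^{n}G;F)$. Also $E$ is infinite dimensional, and $F$ is infinite dimensional when $n=1$: otherwise every element of $\mathcal{P}(^{n}E;F)$ (resp. every operator in $\mathcal{L}(E;F)$ when $n=1$) would be of finite type and hence belong to every polynomial (resp. operator) ideal, contradicting $P\notin\widehat{(\mathcal{J}\circ\mathcal{L})^{fac}}(^{n}E;F)$.

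Next I would construct the candidate copy of $c_{0}$. Let $(A_{m})$ be the unconditional finite dimensional expansion of the identity of $G$, with unconditional constant $K$, and set $S_{m}=A_{1}+\cdots+A_{m}$, $S_{0}=0$; then $\|S_{m}\|\le K$ and $\|\sum_{m\in\sigma}\theta_{m}A_{m}\|\le K$ for all finite $\sigma\subset\mathbb{N}$ and scalars $|\theta_{m}|\le 1$. Put $R_{m}:=Q\circ S_{m}\circ T-Q\circ S_{m-1}\circ T\in\mathcal{P}(^{n}E;F)$. Each $R_{m}$ factors through the finite dimensional space $S_{m}(G)$, so it is an $n$-homogeneous polynomial of finite type and therefore $R_{m}\in\widehat{(\mathcal{J}\circ\mathcal{L})^{fac}}(^{n}E;F)$. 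The partial sums telescope: $\sum_{m=1}^{N}R_{m}=Q\circ S_{N}\circ T$, and since $S_{N}Tx\to Tx$ for every $x\in E$, we have $\sum_{m=1}^{N}R_{m}\to P$ pointwise on $E$.

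The series $\sum_{m}R_{m}$ cannot be unconditionally convergent in $\mathcal{P}(^{n}E;F)$: if it were, its norm sum would coincide with its pointwise sum $P$, so $P$ would be the limit in $\mathcal{P}(^{n}E;F)$ of the finite type polynomials $Q\circ S_{N}\circ T\in\widehat{(\mathcal{J}\circ\mathcal{L})^{fac}}(^{n}E;F)$; since $\mathcal{J}$ is closed, $\widehat{(\mathcal{J}\circ\mathcal{L})^{fac}}(^{n}E;F)$ is a closed subspace of $\mathcal{P}(^{n}E;F)$, whence $P\in\widehat{(\mathcal{J}\circ\mathcal{L})^{fac}}(^{n}E;F)$, a contradiction. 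The key step — and the one I expect to be the real obstacle — is to show that $\sum_{m}R_{m}$ is weakly unconditionally Cauchy in $\mathcal{P}(^{n}E;F)$, i.e. that $\sup\{\,\|\sum_{m\in\sigma}\theta_{m}R_{m}\|:\sigma\subset\mathbb{N}\text{ finite},\ |\theta_{m}|\le 1\,\}<\infty$. For $n=1$ this is immediate, because $R_{m}=Q\circ A_{m}\circ T$ and $\sum_{m\in\sigma}\theta_{m}R_{m}=Q\circ\big(\sum_{m\in\sigma}\theta_{m}A_{m}\big)\circ T$ has norm $\le\|Q\|\,K\,\|T\|$. For $n>1$ one expands, using multilinearity of $\check{Q}$,
$$R_{m}(x)=\sum_{k=1}^{n}\check{Q}\big(\underbrace{S_{m-1}Tx,\dots}_{k-1},\ A_{m}Tx,\ \underbrace{S_{m}Tx,\dots}_{n-k}\big),$$
and then bounds each of the $n$ sums $\sum_{m\in\sigma}\theta_{m}\check{Q}(\dots)$ by combining the uniform bound on $(S_{m})$ with the unconditionality of $(A_{m})$ in the distinguished slot (via a summation by parts, or an averaging over signs, as for polynomials on spaces with unconditional finite dimensional expansions of the identity in \cite{GONZA}). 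The delicate point is to produce an estimate depending only on $\sup_{m}|\theta_{m}|$, and not merely on $(\sum_{m}|\theta_{m}|^{2})^{1/2}$; this is exactly where the unconditionality of $(A_{m})$ is needed.

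Granting weak unconditional Cauchyness, the Bessaga--Pelczynski theorem provides disjoint finite sets $\sigma_{k}\subset\mathbb{N}$ such that the block sequence $\big(\sum_{m\in\sigma_{k}}R_{m}\big)_{k}$ is equivalent to the unit vector basis of $c_{0}$. Its closed linear span is a copy of $c_{0}$ contained in $\overline{[\,R_{m}:m\in\mathbb{N}\,]}\subset\widehat{(\mathcal{J}\circ\mathcal{L})^{fac}}(^{n}E;F)$, so $c_{0}\hookrightarrow\widehat{(\mathcal{J}\circ\mathcal{L})^{fac}}(^{n}E;F)$, which is the first assertion. The non-complementation then follows from the results already established: Corollary \ref{cor 36} when $n=1$ (recalling $\widehat{(\mathcal{J}\circ\mathcal{L})^{fac}}(^{1}E;F)=\mathcal{J}(E;F)$ and that $E$ and $F$ are infinite dimensional), and Proposition \ref{cor 38} when $n>1$ (with $E$ infinite dimensional).
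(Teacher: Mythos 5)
Your construction is in fact the same as the paper's: writing $S_{m}=A_{1}+\cdots+A_{m}$, your $R_{m}=Q\circ S_{m}\circ T-Q\circ S_{m-1}\circ T$ coincides with the paper's $Q_{m}\circ T$, where $Q_{m}(g)=\sum_{\max\{i_{1},\ldots,i_{n}\}=m}\check{Q}(A_{i_{1}}(g),\ldots,A_{i_{n}}(g))$, and the endgame (non--unconditional convergence because $P\notin\widehat{(\mathcal{J}\circ\mathcal{L})^{fac}}(^{n}E;F)$, Bessaga--Pelczynski/Diestel to get $c_{0}$, then Corollary \ref{cor 36} for $n=1$ and Proposition \ref{cor 38} for $n>1$) is the paper's as well. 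The problem is that the step you yourself single out as ``the real obstacle'' --- the weak unconditional Cauchyness of $\sum_{m}R_{m}$ in $\mathcal{P}(^{n}E;F)$ for $n>1$ --- is exactly the substance of the proof, and your sketch for it does not go through as stated. After expanding $R_{m}(x)=\sum_{k=1}^{n}\check{Q}\bigl((S_{m-1}Tx)^{k-1},A_{m}Tx,(S_{m}Tx)^{n-k}\bigr)$, a summation by parts or a single Rademacher averaging in the distinguished slot does not decouple the sum over $m$, because the remaining slots carry $S_{m-1}Tx$ and $S_{m}Tx$, which still vary with $m$; you are left with a sum of $|\sigma|$ terms, each individually bounded, and no mechanism producing a bound independent of $|\sigma|$. (Also, for wuC you do not need the multipliers $\theta_{m}$: by the equivalence recalled in the preliminaries it suffices to bound $\|\sum_{m\in\sigma}R_{m}\|$ over finite $\sigma$; but that does not remove the difficulty.)

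The paper closes this gap by a different mechanism, which you would need to supply: it expands everything in the $A_{i}$'s, observes that for each fixed $g\in G$ the multi-indexed series $\sum_{i_{1},\ldots,i_{n}}\check{Q}(A_{i_{1}}(g),\ldots,A_{i_{n}}(g))$ is unconditionally convergent (the estimate over rectangles $\|\check{Q}\|\prod_{j}\|\sum_{i_{j}\in I_{j}}A_{i_{j}}(g)\|$ together with the unconditionality of $\sum_{i}A_{i}(g)$ and \cite[Theorem 1.9]{DJT}), hence $\sum_{m}Q_{m}(Tx)$ converges unconditionally for every $x$; in particular the family $\{\sum_{m\in\sigma}Q_{m}\circ T:\sigma\ \text{finite}\}$ is pointwise bounded, and the uniform boundedness principle for polynomials \cite[Theorem 2.6]{MUJICA} then yields $\sup_{\sigma}\|\sum_{m\in\sigma}Q_{m}\circ T\|<\infty$, i.e.\ the wuC property --- no direct norm estimate with multipliers is ever attempted. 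So the pointwise-unconditional-convergence-plus-UBP argument (as in \cite{GONZA}) is the missing idea; the rest of your outline (each $R_{m}$ is of finite type and hence lies in $\widehat{(\mathcal{J}\circ\mathcal{L})^{fac}}(^{n}E;F)$ --- the paper proves this membership by an explicit factorization of $\check{Q}\circ(C_{1},\ldots,C_{n})$ for finite-rank $C_{i}$ with respect to every partition, which is worth writing out rather than asserting --- and the closedness of the ideal component used at the end) matches the paper.
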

\begin{proof}

There exists a sequence $(A_{i})\in \mathcal{F}(G;G)$ such that, for every $g\in G$, we have $g=\sum_{i=1}^{\infty}A_{i}(g)$ unconditionally. Then,

\begin{equation*}
\begin{split}
Q\bigg(\displaystyle\sum_{i=1}^{k}A_{i}(g)\bigg)&=\displaystyle\sum_{i_{1},\ldots,i_{n}=1}^{k}\check{Q}(A_{i_{1}}(g),\ldots,A_{i_{n}}(g))\\
&=\displaystyle\sum_{m=1}^{k}\bigg(\displaystyle\sum_{\max\{i_{1},\ldots,i_{n}\}=m}\check{Q}(A_{i_{1}}(g),\ldots,A_{i_{n}}(g))\bigg)\\
&=\displaystyle\sum_{m=1}^{k}Q_{m}(g).
\end{split}
\end{equation*}
Since $Q$ is continuous, we have that $Q(g)=\displaystyle\sum_{m=1}^{\infty}Q_{m}(g)$ for all $g\in G$.

To prove that $Q_{m}\in \widehat{(\mathcal{J}\circ\mathcal{L})^{fac}}(^{n}G;F)$ for every $m\in\mathbb{N}$, it suffices to prove that the multilinear map $\varphi\in \mathcal{L}(^{n}G;F)$ defined by,

$\varphi(g_{1},\ldots,g_{n})=\check{Q}(C_{1}(g_{1}),\ldots,C_{n}(g_{n}))$ is contained in $(\mathcal{J}\circ\mathcal{L})^{fac}(^{n}G;F)$,
for any $C_{1},\ldots,C_{n}\in \mathcal{F}(G;G)$. Let $\pi=\{j_{1}^{1},\ldots,j_{k_{1}}^{1}\}\cup\ldots\cup \{j_{1}^{m},\ldots,j_{k_{m}}^{m}\}$ be a partition of
$\{1,\ldots,n\}$. We consider $B_{i}:\underbrace{G\times\ldots\times G}_{k_{i} \ \   times}\rightarrow \hat{\otimes}_{k_{i},s,\pi} G$, $i=1,\ldots,m$, defined by
$$B_{i}(g_{j_{1}^{i}},\ldots,g_{j_{k_{i}}^{i}})=C_{j_{1}^{i}}(g_{j_{1}^{i}})\widehat{\otimes}_{s,\pi}\ldots\widehat{\otimes}_{s,\pi}C_{j_{k_{i}}^{i}}(g_{j_{k_{i}}^{i}}),$$
for all $g_{j_{1}^{1}},\ldots,g_{j_{k_{m}}^{m}}\in G$.

 $B_{i}=\widehat{B}_{i}\circ \psi$, where $\psi:\underbrace{G\times\ldots\times G}_{k_{i} \ \   times}\rightarrow \hat{\otimes}_{k_{i},s,\pi} G$ is the canonical isomorphism, and
$\widehat{B}_{i}:\hat{\otimes}_{k_{i},s,\pi} G\rightarrow \hat{\otimes}_{k_{i},s,\pi} G$ is defined by

$\widehat{B}_{i}(g_{j_{1}^{i}}\hat{\otimes}_{\pi}\ldots\hat{\otimes}_{\pi}g_{j_{k_{i}}^{i}})=C_{j_{1}^{i}}(g_{j_{1}^{i}})\widehat{\otimes}_{\pi}\ldots\widehat{\otimes}_{\pi}C_{j_{k_{i}}^{i}}(g_{j_{k_{i}}^{i}}).$
Like $C_{j_{1}^{i}}(G),\ldots,C_{j_{k_{i}}^{i}}(G)$ are spaces finite dimensional, then $\widehat{B}_{i}\in \mathcal{J}(\hat{\otimes}_{k_{i},\pi} G;\hat{\otimes}_{k_{i},\pi} G)$ for every $i=1,\ldots,m$. Therefore $B_{i}\in \mathcal{J}\circ\mathcal{L}(\underbrace{G\times\ldots\times G}_{k_{i} \ \   times};\hat{\otimes}_{k_{i},\pi} G)$, $i=1,\ldots,m$. Let $C\in\mathcal{L}(\hat{\otimes}_{k_{1},\pi} G,\ldots,\hat{\otimes}_{k_{m},\pi} G;F)$ given by

$$C(g_{j_{1}^{1}}\hat{\otimes}_{\pi}\ldots\hat{\otimes}_{\pi}g_{j_{k_{1}^{1}}},\ldots,g_{j_{1}^{m}}\hat{\otimes}_{\pi}\ldots\hat{\otimes}_{\pi}g_{j_{k_{m}}^{m}})=
\check{Q}(g_{j_{1}^{1}},\ldots,g_{j_{k_{1}^{1}}},\ldots,g_{j_{1}^{m}},\ldots,g_{j_{k_{m}}^{m}}).$$

We have that

$$\varphi(g_{1},\ldots,g_{n})=\check{Q}(C_{1}(g_{1}),\ldots,C_{n}(g_{n}))=\check{Q}(C_{j_{1}^{1}}(g_{j_{1}^{1}}),\ldots,C_{j_{k_{m}}^{m}}(g_{j_{k_{m}}^{m}}))=C(B_{1}(g_{1}^{1},\ldots,g_{k_{1}}^{1}),\ldots,B_{m}(g_{j_{1}^{m}},\ldots,g_{j_{k_{m}}^{m}}))$$

for all $g_{1},\ldots,g_{n}\in G$.

Therefore, $Q_{m}\in\widehat{(\mathcal{J}\circ\mathcal{L})^{fac}}(^{n}G;F)$ for every $m\in\mathbb{N}$, and so,
$Q_{m}\circ T\in\widehat{(\mathcal{J}\circ\mathcal{L})^{fac}}(^{n}E;F)$ for every $m\in\mathbb{N}$.

Choosing finite subsets $I_{1},\ldots,I_{k}$ of integers, we have

$$\bigg\|\sum_{i_{1}\in I_{1},\ldots,i_{k}\in I_{k}}\check{Q}(A_{i_{1}}(g),\ldots,A_{i_{k}}(g))\bigg\|=\bigg\|\check{Q}\bigg(\sum_{i_{1}\in I_{1}}A_{i_{1}}(g),\ldots,\sum_{i_{k}\in I_{k}}A_{i_{k}}(g)\bigg)\bigg\|
\leq\|\check{Q}\|\bigg\|\sum_{i_{1}\in I_{1}}A_{i_{1}}(g)\bigg\|\ldots\bigg\|\sum_{i_{k}\in I_{k}}A_{i_{k}}(g)\bigg\|$$
Hence, the series

$$\sum_{i_{1}\in I_{1},\ldots,i_{k}\in I_{k}}^{\infty}\check{Q}(A_{i_{1}}(g),\ldots,A_{i_{k}}(g))$$
is unconditionally convergent for all $g\in G$ \cite[Theorem 1.9]{DJT}. Therefore, $P(x)=\displaystyle\sum_{m=1}^{\infty}Q_{m}(T(x))$ unconditionally.
Moreover, by the uniform boundedness principle \cite[Theorem 2.6]{MUJICA}, we have

$$\sup\bigg\{\bigg\|\sum_{m\in F}Q_{m}\circ T\bigg\| ; F\subset \mathbb{N},\ F \ finite\bigg\}<\infty.$$
So $\displaystyle\sum_{m=1}^{\infty} Q_{m}\circ T$ is weakly unconditionally
Cauchy in $\widehat{(\mathcal{J}\circ\mathcal{L})^{fac}}(^{n}E;F)$. Since $P\notin \widehat{(\mathcal{J}\circ\mathcal{L})^{fac}}(^{n}E;F)$, an application
of \cite[p.45, Theorem 8]{DIESTELL} shows that $\widehat{(\mathcal{J}\circ\mathcal{L})^{fac}}(^{n}E;F)$ contains a copy of $c_{0}$, and therefore by Corollary \ref{cor 36} and Proposition \ref{cor 38} $\widehat{(\mathcal{J}\circ\mathcal{L})^{fac}}(^{n}E;F)$ is not complemented in $\mathcal{P}(^{n}E;F)$.
\end{proof}

\begin{corollary}\label{cor 3000000}
 Let $E$ and $F$ be Banach spaces, with $E$ infinite dimensional, and let $n>1$. If each $P\in \mathcal{P}(^{n}E; F)$ such that $P\notin \widehat{(\mathcal{J}\circ\mathcal{L})^{fac}}(^{n}E;F)$ admits a factorization $P=Q\circ T$, where $T\in \mathcal{L}(E;G)$, $Q\in \mathcal{P}(^{n}G;F)$ and $G$ is a Banach space with an unconditional finite dimensional expansion of the identity, then the following conditions are equivalent for any closed operator ideal $\mathcal{J}\subset\mathcal{L}_{K}$.
 \begin{enumerate}

\item [(1)] $\widehat{(\mathcal{J}\circ\mathcal{L})^{fac}}(^{n}E;F)$ contains a copy of $c_{0}$.
\item [(2)] $\widehat{(\mathcal{J}\circ\mathcal{L})^{fac}}(^{n}E;F)$ is not complemented in $\mathcal{P} (^{n}E; F)$.
\item [(3)] $\widehat{(\mathcal{J}\circ\mathcal{L})^{fac}}(^{n}E;F)\neq \mathcal{P} (^{n}E; F)$.
\item [(4)] $\mathcal{P}(^{n}E; F)$ contains a copy of $c_{0}$.
\item [(5)] $\mathcal{P}(^{n}E; F)$ contains a copy of $\ell_{\infty}$.
\end{enumerate}
 \end{corollary}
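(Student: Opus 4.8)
The plan is to prove the cycle of implications $(1)\Rightarrow(4)\Rightarrow(5)\Rightarrow(1)$ and $(1)\Leftrightarrow(2)$ and $(1)\Leftrightarrow(3)$, leaning on the earlier results of the paper and on the factorization hypothesis. First I would dispose of the easy equivalence $(1)\Leftrightarrow(2)$: the implication $(1)\Rightarrow(2)$ is precisely Proposition \ref{cor 38} (since $E$ is infinite dimensional and $n>1$), and $(2)\Rightarrow(1)$ is the contrapositive of the trivial fact that a closed subspace containing no copy of $c_0$ which equals the whole space is complemented — more to the point, if $\widehat{(\mathcal{J}\circ\mathcal{L})^{fac}}(^{n}E;F)$ contains no copy of $c_{0}$, I will show via $(3)$ that it must coincide with $\mathcal{P}(^{n}E;F)$ and hence be (trivially) complemented.

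\medskip

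Next, the core of the argument is the block $(3)\Rightarrow(1)$, and this is where the factorization hypothesis and Theorem \ref{thm:(Teorema 1797)} do the work. If $\widehat{(\mathcal{J}\circ\mathcal{L})^{fac}}(^{n}E;F)\neq\mathcal{P}(^{n}E;F)$, pick $P\in\mathcal{P}(^{n}E;F)$ with $P\notin\widehat{(\mathcal{J}\circ\mathcal{L})^{fac}}(^{n}E;F)$. By hypothesis $P=Q\circ T$ with $T\in\mathcal{L}(E;G)$, $Q\in\mathcal{P}(^{n}G;F)$, and $G$ having an unconditional finite dimensional expansion of the identity; Theorem \ref{thm:(Teorema 1797)} then yields directly that $\widehat{(\mathcal{J}\circ\mathcal{L})^{fac}}(^{n}E;F)$ contains a copy of $c_{0}$, which is $(1)$. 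This also gives $(1)\Leftrightarrow(3)$ once we observe $(1)\Rightarrow(3)$ is immediate: $\mathcal{P}(^{n}E;F)$ always contains $\widehat{(\mathcal{J}\circ\mathcal{L})^{fac}}(^{n}E;F)$ properly as soon as the latter has a copy of $c_0$ but is not complemented — more cleanly, if $(1)$ holds then by Proposition \ref{cor 38} the subspace is not complemented, so in particular it cannot equal the whole space, giving $(3)$.

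\medskip

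For $(1)\Rightarrow(4)$, note $\widehat{(\mathcal{J}\circ\mathcal{L})^{fac}}(^{n}E;F)$ is a (closed) subspace of $\mathcal{P}(^{n}E;F)$, so any copy of $c_0$ inside it is a copy of $c_0$ inside $\mathcal{P}(^{n}E;F)$; this is trivial. The step $(4)\Rightarrow(5)$ is the one requiring the factorization hypothesis again: starting from a copy of $c_0$ in $\mathcal{P}(^{n}E;F)$, I would argue that $\mathcal{P}(^{n}E;F)\neq\widehat{(\mathcal{J}\circ\mathcal{L})^{fac}}(^{n}E;F)$ — otherwise by $(1)\Rightarrow(2)$ from Proposition \ref{cor 38} combined with the already-established equivalences, or more directly, if equality held then $\mathcal{P}(^{n}E;F)$ itself would be $\widehat{(\mathcal{J}\circ\mathcal{L})^{fac}}(^{n}E;F)$ and trivially complemented in itself while containing $c_0$, which contradicts Proposition \ref{cor 38}. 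Hence $(3)$ holds, and then the weakly-unconditionally-Cauchy-series construction in the proof of Theorem \ref{thm:(Teorema 1797)} together with the Diestel–Faires theorem (\cite[p. 20, Theorem 2]{DIESTEL}) upgrades the copy of $c_0$ to a copy of $\ell_\infty$ in $\mathcal{P}(^{n}E;F)$: one builds the vector measure $\mu:\wp(\mathbb{N})\rightarrow\mathcal{P}(^{n}E;F)$ from the $c_0$-basis $(P_i)$, observes it is not strongly additive since $\|P_i\|\not\to 0$, and concludes. Finally $(5)\Rightarrow(1)$: if $\mathcal{P}(^{n}E;F)$ contains $\ell_\infty$ then in particular it contains $c_0$, so $(4)$ holds, hence $(3)$ holds as above, hence $(1)$ by the $(3)\Rightarrow(1)$ step. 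The main obstacle I expect is organizing these implications so as to avoid circularity — in particular making sure the passage $(4)\Rightarrow(3)$ does not secretly invoke $(1)$; the cleanest route is to prove $(3)\Rightarrow(1)\Rightarrow(2)\Rightarrow(3)$ as a self-contained triangle first using Proposition \ref{cor 38} and Theorem \ref{thm:(Teorema 1797)}, then graft on $(1)\Rightarrow(4)\Rightarrow(5)\Rightarrow(1)$ using only the Diestel–Faires/wuC machinery and the trivial containments, treating "$\mathcal{P}(^{n}E;F)$ contains $c_0$ but $=\widehat{(\mathcal{J}\circ\mathcal{L})^{fac}}(^{n}E;F)$" as the contradiction with Proposition \ref{cor 38} that forces $(3)$.
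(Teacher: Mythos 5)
Your handling of the equivalences among $(1)$, $(2)$, $(3)$, $(4)$ and of $(5)\Rightarrow(4)$ coincides with the paper's proof: $(1)\Rightarrow(2)$ is Proposition \ref{cor 38}, $(3)\Rightarrow(1)$ is Theorem \ref{thm:(Teorema 1797)} applied to the hypothesized factorization, $(2)\Rightarrow(3)$ and $(1)\Rightarrow(4)$ are trivial, and your contradiction argument for $(4)\Rightarrow(3)$ is exactly the one in the paper, so there is no circularity problem there.

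The genuine gap is in $(4)\Rightarrow(5)$. You propose to take a copy $(P_i)$ of the unit vector basis of $c_0$ inside $\mathcal{P}(^{n}E;F)$, form the vector measure $\mu(A)(x)=\sum_{i\in A}P_i(x)$, note that $\|P_i\|\not\to 0$ makes it non strongly additive, and apply Diestel--Faires. But $\mu(A)$ need not exist: from a copy of $c_0$ in $\mathcal{P}(^{n}E;F)$ one only gets that $\sum_i P_i(x)$ is weakly unconditionally Cauchy in $F$ for each $x$, and such a series converges unconditionally only when $F$ contains no copy of $c_0$ (\cite[p.~45, Theorem 8]{DIESTELL}). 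In Corollary \ref{cor 36} and Proposition \ref{cor 38} this convergence was available because one first reduced, via Corollary \ref{cor 33} and Lemma \ref{cor 3901}, to the case that $F$ contains no copy of $c_0$; that reduction was legitimate there because the desired conclusion was non-complementation, which those results give directly when $c_0\hookrightarrow F$. Here the desired conclusion is that $\ell_\infty$ embeds in $\mathcal{P}(^{n}E;F)$, and nothing you have established covers the case $c_0\hookrightarrow F$: for instance, with $F=c_0$, $\|\varphi\|=1$ and $P_i(x)=\varphi(x)^{n}e_i$, the sequence $(P_i)$ is a copy of the $c_0$-basis in $\mathcal{P}(^{n}E;c_0)$, yet $\sum_{i\in\mathbb{N}}P_i(x)$ diverges whenever $\varphi(x)\neq 0$, so your measure is simply undefined and Diestel--Faires cannot be invoked. (The implication is still true in that case, but it requires a separate argument, e.g.\ a Josefson--Nissenzweig-type construction of an embedding of $\ell_\infty$.) The paper avoids this issue altogether: it deduces $(1)$ from $(4)$, uses $\widehat{(\mathcal{J}\circ\mathcal{L})^{fac}}(^{n}E;F)\subset\mathcal{P}_{K}(^{n}E;F)$, passes through Ryan's isomorphisms $\mathcal{P}(^{n}E;F)\cong\mathcal{L}(\hat{\otimes}_{n,s,\pi}E;F)$ and $\mathcal{P}_{K}(^{n}E;F)\cong\mathcal{L}_{K}(\hat{\otimes}_{n,s,\pi}E;F)$, and then applies the results of Kalton \cite{KALTON} and John \cite{KA} asserting that a copy of $c_0$ in the space of compact operators forces a copy of $\ell_\infty$ in $\mathcal{L}(\hat{\otimes}_{n,s,\pi}E;F)$, an argument valid for arbitrary $F$. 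Either adopt that route, or supplement your Diestel--Faires argument with the missing case $c_0\hookrightarrow F$.
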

 \begin{proof}

 $(1)\Rightarrow (2)$ by Proposition \ref{cor 38}.


$(2)\Rightarrow (3)$ is obvious.

$(3)\Rightarrow (1)$ by Theorem \ref{thm:(Teorema 1797)}.

$(1)\Rightarrow (4)$ is obvious.

$(4)\Rightarrow (3)$ suppose $(4)$ holds and $(3)$ does not hold. Then $\widehat{(\mathcal{J}\circ\mathcal{L})^{fac}}(^{n}E;F)=\mathcal{P}(^{n}E; F)\supset c_{0}$. Thus $(1)$ holds,
and therefore $(3)$ holds, a contradiction.

$(5)\Rightarrow (4)$ is obvious.

$(4)\Rightarrow (5)$ Since $(4)\Rightarrow (1)$ $\widehat{(\mathcal{J}\circ\mathcal{L})^{fac}}(^{n}E;F)\subset\mathcal{P}_{K} (^{n}E; F)$ contains a copy of $c_{0}$. By a result of Ryan \cite{RYAN} $\mathcal{P}(^{n}E; F)$ and $\mathcal{P}_{K} (^{n}E; F)$ are isometrically isomorphic to $\mathcal{L}(\widehat{\otimes}_{n,s,\pi}E; F)$ and $\mathcal{L}_{K}(\widehat{\otimes}_{n,s,\pi}E; F)$, respectively. Thus $\mathcal{L}_{K}(\widehat{\otimes}_{n,s,\pi}E; F)$ contains a copy of $c_{0}$.
Since $E$ is infinite dimensional, $\widehat{\otimes}_{n,s,\pi}E$ is also infinite dimensional.Then by combining the proofs of \cite[Theorem 6, $(iii)\Rightarrow(ii)$]{KALTON} and \cite[Remark 3 e) $2\Rightarrow 3$ ]{KA} we can conclude that $\mathcal{L}(\widehat{\otimes}_{n,s,\pi}E; F)$ contains a copy of $\ell_{\infty}$ and the result follows.

Thus $(1)$, $(2)$, $(3)$, $(4)$ and $(5)$ are equivalent.
\bigskip

In particular if $\mathcal{J}=\mathcal{L}_{K}$ and $E$ has an unconditional finite dimensional expansion of the identity we obtain \cite[Theorem 7]{GONZA}. The assumptions of this corollary apply also if $F$ is a complemented subspace of a space with an unconditional basis.

\end{proof}

\end{document}